
\documentclass{article}
\date{\vspace{-5ex}}
\usepackage[margin=1in]{geometry}
\usepackage{color,soul}
\usepackage{setspace}
\usepackage{float}
\usepackage{subfloat}
\usepackage[utf8]{inputenc} 
\usepackage[T1]{fontenc}    
\usepackage{url}            
\usepackage{booktabs}       
\usepackage{amsfonts}       
\usepackage{nicefrac}       
\usepackage{microtype}      

\usepackage{mathtools,amssymb,graphicx,comment}
\usepackage{amsmath,amsthm}
\usepackage{bbm}
\usepackage{subcaption}
  {
      
  }
  \usepackage{breqn}
\usepackage{enumitem}
  \usepackage{tcolorbox}

\usepackage{hyperref}
\hypersetup{%
  colorlinks=false,
  linkbordercolor=red
}  

\definecolor{darkred}{RGB}{150,0,0}
\definecolor{darkgreen}{RGB}{0,150,0}
\definecolor{darkblue}{RGB}{0,0,150}
\hypersetup{colorlinks=true, linkcolor=red, citecolor=darkgreen, urlcolor=darkblue}
\newcommand\blfootnote[1]{%
  \begingroup
  \renewcommand\thefootnote{}\footnote{#1}%
  \addtocounter{footnote}{-1}%
  \endgroup
}


\newcommand*{\email}[1]{\texttt{#1}}

\newcommand{\Gamc}{\Upsilon}

\newcommand{\gammat}{\widetilde{\gamma}}
\newcommand{\gammaone}{\gamma_1}
\newcommand{\gammatwo}{\gamma_2}

\newcommand{\thetao}{\overline{\theta}}


\newcommand{\mathleft}{\@fleqntrue\@mathmargin0pt}
\newcommand{\mathcenter}{\@fleqnfalse}
\newcommand{\corr}[2]{{\rm{corr}}\left(\,{#1}\,;\,{#2}\,\right)}

\newcommand{\env}[3]{\mathcal{M}_{{#1}}\left({#2};{#3}\right)}
\newcommand{\prox}[3]{\mathrm{prox}_{{#1}}\left({#2};{#3}\right)}
\newcommand{\proxri}[3]{p_{#1}\left({#2},{#3}\right)}

\newcommand{\ellprox}[3]{L_{#1}\left({#2},{#3}\right)}

\newcommand{\ellp}{\ell^\prime}

\newcommand{\envl}[2]{\mathcal{M}_{\ell}\left({#1};{#2}\right)}
\newcommand{\proxl}[2]{\mathrm{prox}_{\ell}\left({#1};{#2}\right)}

\newcommand{\elldd}{\ell^{\prime\prime}}
\newcommand{\elld}{\ell^{\prime}}

\newcommand{\envdx}[3]{\mathcal{M}^{\prime}_{{#1},1}\left({#2};{#3}\right)}

\newcommand{\envdla}[3]{\mathcal{M}^{\prime}_{{#1},2}\left({#2};{#3}\right)}

\newcommand{\ourx}{\al G + \mu S Y}

\newcommand{\R}{\mathbb{R}}

\newcommand{\al}{\alpha}



\newcommand{\xh}{\widehat{\x}}

\providecommand{\abs}[1]{\lvert#1\rvert}
\providecommand{\norm}[1]{\lVert#1\rVert}


\DeclarePairedDelimiterX{\inp}[2]{\langle}{\rangle}{#1, #2}

\newcommand{\Id}{\mathbf{I}}

\newcommand{\ksi}{\xi}

\usepackage{dsfont}

\newcommand{\simiid}{\stackrel{\text{iid}}{\sim}}


\newcommand{\Pro}{\mathbb{P}}

\newcommand{\soft}[2]{{\Hc}\left({#1};{#2}\right)}




\theoremstyle{theorem}
\newtheorem{propo}{Proposition}[section]
\newtheorem{thm}{Theorem}[section]
\newtheorem{lem}{Lemma}[section]
\newtheorem{cor}{Corollary}[section]
\newtheorem{ass}{Assumption}
\theoremstyle{remark}
\newtheorem{remark}{Remark}

\theoremstyle{definition}




\newcommand{\eps}{\varepsilon}

\newcommand{\sign}{\mathrm{sign}}

\newcommand{\Exp}{\mathbb{E}}               
\newcommand{\E}{\mathbb{E}}                    
\newcommand{\la}{{\lambda}}                     

\newcommand{\sig}{\sigma}

\newcommand{\nn}{\notag}


\newcommand{\G}{\mathbf{G}}

\newcommand{\A}{\mathbf{A}}


\newcommand{\x}{\mathbf{x}}
\newcommand{\ub}{\mathbf{u}}

\newcommand{\w}{\mathbf{w}}

\newcommand{\g}{\mathbf{g}}
\newcommand{\vb}{\mathbf{v}}

\newcommand{\y}{\mathbf{y}}
\newcommand{\s}{\mathbf{s}}

\newcommand{\ab}{\mathbf{a}}

\newcommand{\h}{\mathbf{h}}

\newcommand{\betab}{\boldsymbol{\beta}}

\newcommand{\Fc}{\mathcal{F}}
\newcommand{\Mc}{\mathcal{M}}

\newcommand{\Sc}{{\mathcal{S}}}

\newcommand{\Rc}{\mathcal{R}}
\newcommand{\Nn}{\mathcal{N}}

\newcommand{\Hc}{\mathcal{H}}

\newcommand{\Jc}{\mathcal{J}}
\newcommand{\Ic}{\mathcal{I}}




\newcommand{\beq}{\begin{equation}}
\newcommand{\eeq}{\end{equation}}
\newcommand{\bea}{\begin{align}}
\newcommand{\eea}{\end{align}}

\newcommand{\vp}{\vspace{4pt}}


\usepackage{amsmath}
\def\bea#1\eea{\begin{align}#1\end{align}}
\title{Sharp Asymptotics and Optimal Performance\\ for Inference in Binary Models}

\usepackage{colortbl}

\author{%
Hossein Taheri, Ramtin Pedarsani, and  Christos Thrampoulidis\\
{\small{Department of Electrical and Computer Engineering, }}
\\\small{University of California, Santa Barbara}.
\blfootnote{\small{Part of this work to appear in the 23rd International Conference on 
Artificial Intelligence and Statistics (AISTATS), 2020. Emails: \email  \{hossein, ramtin, cthrampo\}@ucsb.edu}}}


\begin{document}
\maketitle

%
\begin{abstract}
%
We study convex empirical risk minimization for high-dimensional inference in binary models. Our first result sharply predicts the statistical performance of such estimators in the linear asymptotic regime under isotropic Gaussian features. Importantly, the predictions hold for a wide class of convex loss functions, which we exploit in order to prove a bound on the best achievable performance among them. Notably, we show that the proposed bound is tight for popular binary models (such as Signed, Logistic or Probit), by constructing appropriate loss functions that achieve it. More interestingly, for binary linear classification under the Logistic and Probit models, we prove that the performance of least-squares is no worse than 0.997 and 0.98 times the optimal one. Numerical simulations corroborate our theoretical findings and suggest they are accurate even for relatively small problem dimensions. 

%
\end{abstract}

%
\section{Introduction}

%
%
%

\subsection{Motivation}
Classical estimation theory studies problems in which the number of unknown parameters $n$ is small compared to the number of observations $m$. In contrast, modern inference problems are typically \emph{high-dimensional}, that is $n$ can be of the same order as $m$. Examples are abundant in a wide range of signal processing and machine learning applications such as medical imaging, wireless communications, recommendation systems and so on. Classical tools and theories are not applicable in these modern inference problems. As such, over the last two decades or so, the study of high-dimensional estimation problems has received significant attention. 

Perhaps the most well-studied setting is that of noisy linear observations (aka, linear regression). The literature on the topic is vast with remarkable contributions from the statistics, signal processing and machine learning communities. Several recent works focus on the \emph{linear asymptotic regime} and derive \emph{sharp} results on the inference performance of appropriate convex optimization methods, e.g., \cite{donoho2006compressed,Sto,Cha,DMM,tropp2014convex,TroppUniversal,montanariLasso,StoLASSO,OTH13,karoui2013asymptotic,bean2013optimal,COLT,donoho2016high,Master,advani2016statistical,weng2018overcoming,TSP18,miolane2018distribution,bu2019algorithmic,xu2019consistent,celentano2019fundamental}. These works show that, albeit challenging, \emph{sharp} results are advantageous over loose order-wise bounds. Not only do they allow for accurate comparisons between different choices of the optimization parameters, but they also form the basis for establishing optimal such choices as well as fundamental performance limitations. 

This paper takes this recent line of work a  step further by demonstrating that results of this nature can be achieved in binary observation models. While we depart from the previously studied linear regression model, we remain faithful to the requirement and promise of sharp results. Binary models are popularly applicable in a wide range of signal-processing (e.g., highly quantized measurements) and machine learning (e.g., binary classification) problems. 
We derive sharp asymptotics for a rich class of convex optimization estimators, which includes least-squares, logistic regression and hinge-loss as special cases. Perhaps more interestingly, we use these results to derive fundamental performance limitations and design optimal loss functions that provably outperform existing choices.

In Section \ref{sec:form} we formally introduce the problem setup. The paper's main contributions and organization are presented in Section \ref{sec:over}. A detailed discussion of prior art follows in Section \ref{sec:prior}. 

\vp
\noindent\textbf{Notation.}~~
The symbols $\Pro(\cdot)$, $\Exp\left[\cdot\right]$ and $\text{Var}[\cdot]$ denote probability, expectation and variance. We use boldface notation for vectors. $\|\vb\|_2$ denotes the Euclidean norm of a vector $\vb$. We write $i\in[m]$ for $i=1,2,\ldots,m$. 
 When writing $x_* = \arg\min_x f(x),$ we let the  operator $\arg\min$ return any one of the possible minimizers of $f$. For all $x\in\mathbb{R}$, $\Phi(x)$ is the cumulative distribution function of standard normal and Gaussian $Q$-function at $x$ is defined as $Q(x)=1-\Phi(x).$
\\
\subsection{Problem statement}\label{sec:form}
Consider the problem of recovering $\x_0\in\R^n$ from observations $y_i=f(\ab_i^T\x_0),~i\in[m]$, where $f:\mathbb{R}\rightarrow\{-1,+1\}$ is a (possibly random) binary function. We study the performance of \emph{empirical-risk minimization (ERM)} estimators $\hat\x_\ell$ that solve the following optimization problem for some \emph{convex} loss function $\ell:\R\rightarrow\R$
\bea\label{eq:gen_opt}
\xh_\ell := \arg\min_\x \frac{1}{m}\sum_{i=1}^{m} \ell(y_i\ab_i^T\x).
\eea 
\noindent\textbf{Model.}~The binary observations $y_i, i\in[m]$ are determined by a label function $f:\mathbb{R}\rightarrow\{-1,1\}$ as follows:
\begin{align}\label{eq:gen_model}
y_i=f(\ab_i^T\x_0),\;\;~i\in[m],
\end{align}
where $\ab_i$'s are known measurement vectors with i.i.d. Gaussian entries; and $\x_0\in\R^n$ is an unknown vector of coefficients. Some  popular examples for the  label function $f$ include the following:
 \\
 \\
\noindent$\bullet~${\emph{(Noisy) Signed}}: $y_i=
 \begin{cases}  \sign(\ab_i^T\x_0) &, \text{w.p.}~~1-\eps, \\  -\sign(\ab_i^T\x_0) &, \text{w.p.}~~\eps,  \end{cases}
~~~~\text{where}~~\eps \in [0,1/2]. 
$ 
\\
\\
\noindent$\bullet~${\emph{Logistic}}: 
$
y_i = \begin{cases}  +1 &, \text{w.p.}~~\frac{1}{1+\exp({-\ab_i^T\x_0})}, \\  -1 &, \text{w.p.}~~1- \frac{1}{1+\exp({-\ab_i^T\x_0})}.  \end{cases}
$
\\

\noindent$\bullet~${\emph{Probit}}:
$
y_i = \begin{cases}  +1 &, \text{w.p.}~~\Phi(\ab_i^T\x_0), \\  -1 &, \text{w.p.}~~1-\Phi(\ab_i^T\x_0).  \end{cases}
$
\\

\noindent\textbf{Loss function.}~We study the recovery performance of estimates $\xh_{\ell}$ of $\x_0$ that are obtained by solving \eqref{eq:gen_opt}
for proper convex loss functions $\ell:\R\rightarrow\R$. Different choices for $\ell$ lead to popular specific estimators including the following:
\\
\\
\noindent$\bullet~${\emph{Least Squares (LS):}} $\ell(t)=\frac{1}{2}(t-1)^2$,

\noindent$\bullet~${\emph{Least-Absolute Deviations (LAD):}} $\ell(t)=|t-1|$,

\noindent$\bullet~${\emph{Logistic Loss:}} $\ell(t)=\log(1+\exp({-t}))$,

\noindent$\bullet~${\emph{Exponential Loss:}} $\ell(t)=\exp({-t})$,

\noindent$\bullet~${\emph{Hinge Loss:}} $\ell(t)=\max\{1-t\,,\,0\}$.
\\
\\
\noindent\textbf{Performance measure.}~We measure 
performance of the estimator $\xh_{\ell}$ by the value of its correlation to $\x_0$, i.e., 
\bea\label{eq:corr}
\corr{\xh_\ell}{\x_0}:=\frac{\,{\inp{\xh_\ell}{\x_0}}\,}{\|\xh_\ell\|_2 \|\x_0\|_2} \in [-1,1].
\eea

Obviously, we seek estimates that maximize correlation. While correlation is the measure of primal interest, our results extend rather naturally to other parameter estimation metrics, such as square error, as well as prediction metrics, such as classification error.

\vp
\noindent\textbf{Model assumptions.}~All our results are valid under the assumption that the measurement vectors have i.i.d. Gaussian entries. 

\begin{ass}[Gaussian feature vectors]\label{ass:Gaussian}
 The vectors \,$\ab_i\in\R^n,\,i\in[m]$ have entries i.i.d.\,\,standard normal.
\end{ass}
\noindent We further assume that $\|\x_0\|_2=1$. This assumption is without loss of generality since the norm of $\x_0$ can always be absorbed in the link function. Indeed, letting $\|\x_0\|_2=r$, we can always write the measurements as $f(\ab^T\x_0) = \widetilde{f}\big(\ab^T\widetilde{\x}_0\big)$, where $\widetilde{\x}_0=\x_0/r$ (hence, $\|\widetilde{\x}_0\|_2=1$) and $\widetilde{f}(t) = f\big(r t\big)$.
We make no further assumptions on the distribution of the true vector $\x_0$.

\subsection{Contributions and organization}\label{sec:over}
This paper's main contributions are summarized below.

\vp
\noindent$\bullet$~\textbf{Sharp asymptotics}:~We show that the absolute value of correlation of $\xh_\ell$ to the true vector $\x_0$ is sharply predicted by $\sqrt{1/(1+\sig_{\ell}^2)}$ where the "effective noise" parameter $\sig_{\ell}$ can be explicitly computed by solving a system of three non-linear equations in three unknowns. We find that the system of equations (and thus, the value of $\sig_{\ell}$) depends on the loss function $\ell$ through its Moreau envelope function. Our prediction holds in the 
linear asymptotic regime in which $m,n\rightarrow\infty$ and $m/n\rightarrow \delta >1$. See Section \ref{sec:gen}.
\begin{figure*}[t!]
\centering
	\begin{subfigure}{0.45\textwidth}
		\centering
    		\includegraphics[width=6.7cm, height=5.5cm]{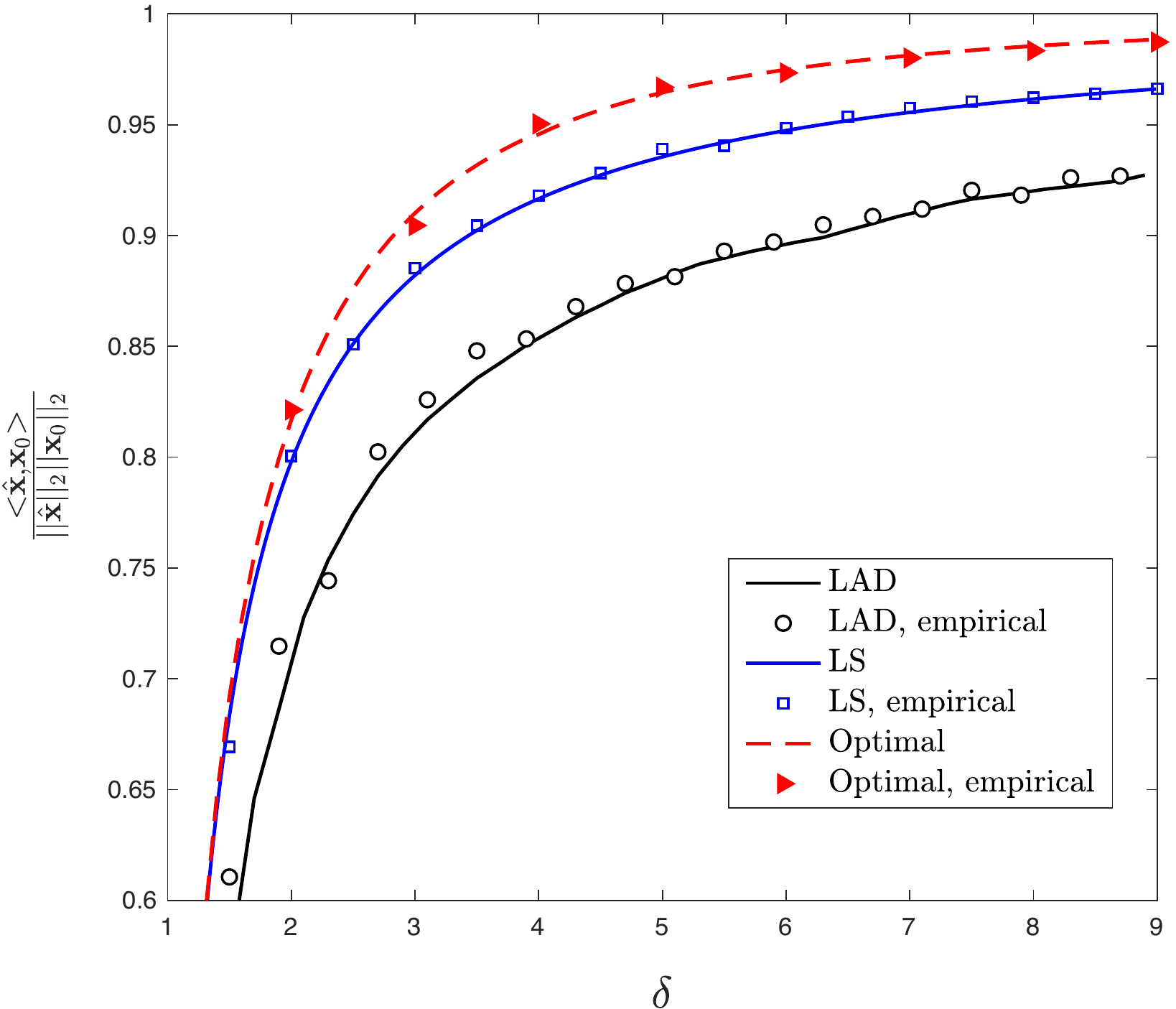}
    		\caption{{\footnotesize}}
    		\label{fig:figure1}
    \end{subfigure}
    	\begin{subfigure}{0.45\textwidth}
		\centering
    		\includegraphics[width=6.7cm, height=5.5cm]{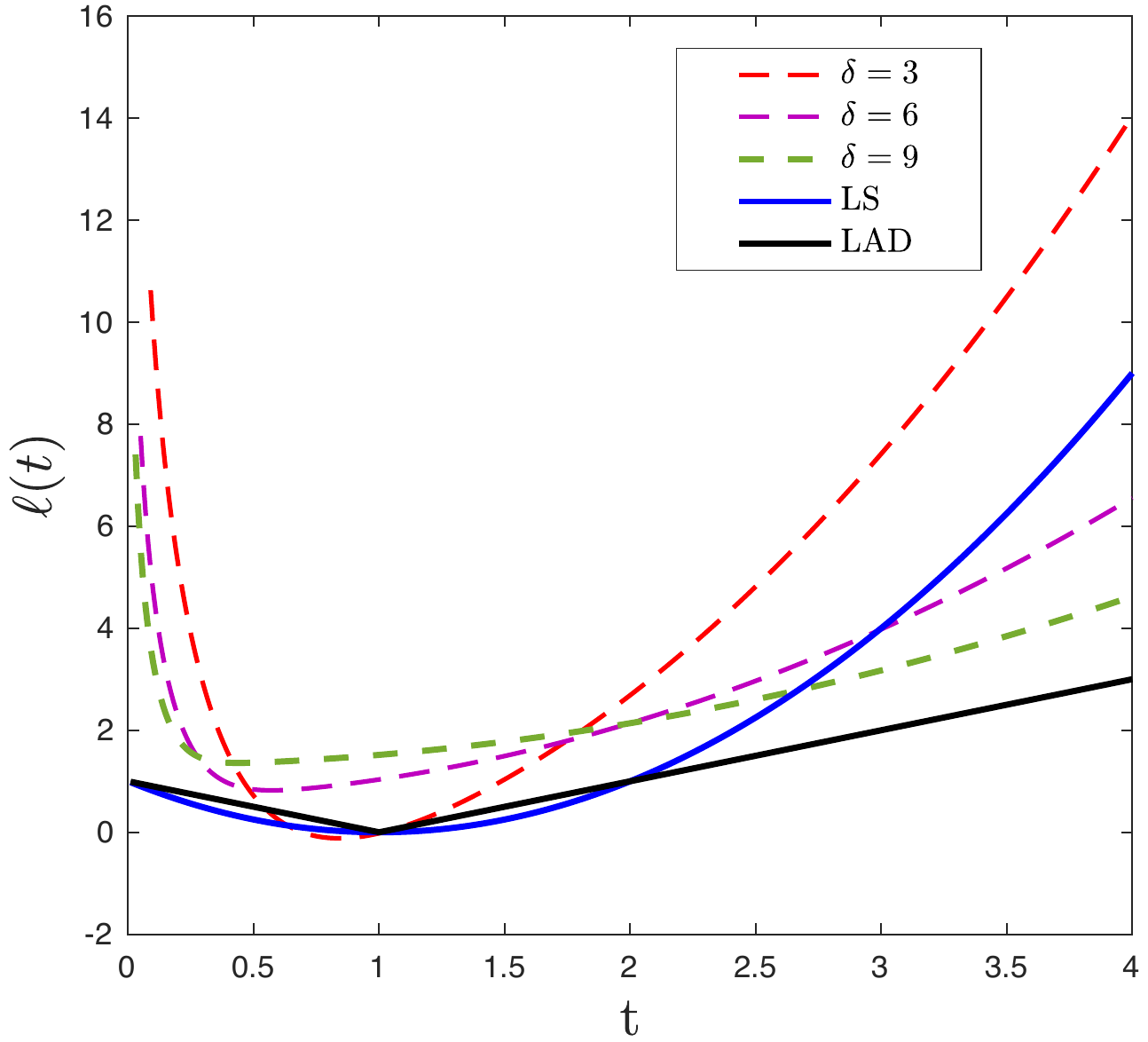}
    		\caption{{\footnotesize}}
    		\label{fig:optimal_loss}
    \end{subfigure}
    
    \caption{{\footnotesize Left: Comparison between analytical (solid lines) and empirical (markers) performance for least-squares (LS) and least-absolute deviations (LAD), along with optimal performance (dashed line) as predicted by the upper bound of Theorem \ref{sec:lem} for the Signed model($\eps=0$). The red markers depict the empirical performance of the optimal loss function that attains the upper bound.
     Right: Illustrations of optimal loss functions for the Signed model for different values of $\delta$ according to Theorem \ref{thm:opt_loss}.}} 
\label{fig:intro}
    \end{figure*}
\begin{footnotesize}
\begin{table*}
\centering
\begin{tabular}{c | c c c c c c c c}
$\delta$ & $2$ & $3$ & $4$ & $5$ & $6$  & $7$ & $8$ & $9$ \\ \hline
\text{\footnotesize Predicted Performance} & $0.8168$ & $0.9101$ & $0.9457$ & $0.9645$ & $0.9748$ & $0.9813$ & $0.9855$ & $0.9885$ \\ \hline
\text{\footnotesize Empirical Performance} & $0.8213$ & $0.9045$ & $0.9504$ & $0.9669$ & $0.9734$ & $0.9801$ & $0.9834$ & $0.9873$
\end{tabular}
\caption{\footnotesize Analytical predictions and empirical performance of the optimal loss function for Signed model. Empirical results are averaged over 20 independent experiments for $n=128$.}
\label{tab:1}
\end{table*}
\end{footnotesize}
\vp
\\
\noindent$\bullet$~\textbf{Fundamental limits}:~We establish fundamental limits on the performance of convex optimization-based estimators by computing an upper bound on the best possible correlation performance among all convex  loss functions. We compute the upper bound by solving a certain nonlinear equation and we show that such a solution exists for all $\delta>1$. See Section \ref{sec:upper}.

\vp
\noindent$\bullet$~\textbf{Optimal performance}:~For certain models including Signed and Logistic, we find the loss functions that achieve the optimal performance, i.e., they attain the previously derived upper bound. See Section \ref{sec:opt_func}.

\vp
\noindent$\bullet$~\textbf{Optimality of LS}:~For binary logistic and sigmoid models, we prove that the correlation performance of least-squares (LS) is at least as good as 0.9972 and 0.9804 times the optimal performance. See Section \ref{sec:LS}.


%
\vp
\noindent$\bullet$~\textbf{Numerical simulations}:~We specialize our results on general models and loss functions to popular instances, for which we provide simulation results that demonstrate the accuracy of the theoretical predictions. See Section \ref{sec:numerical}.

\vp
Figure \ref{fig:intro} contains a pictorial preview of our results described above for the special case of Signed measurements. First, Figure \ref{fig:figure1} depicts the correlation performance of LS and LAD estimators as a function of the aspect ratio $\delta$. Both theoretical predictions and numerical results are shown; note the close match for even small dimensions. Second, the dashed line on the same figure shows the upper bound derived in this paper -- there is no convex loss function that results in correlation exceeding this line. Third, we show that the upper bound can be achieved by the loss functions depicted in Figure \ref{fig:optimal_loss} for several values of $\delta$. We solve \eqref{eq:gen_opt} for this choice of loss functions using gradient descent and numerically evaluate the achieved correlation performance. The recorded values are compared in Table \ref{tab:1} to the corresponding values of the upper bound; again, note the close agreement between the values as predicted by the findings of this paper. We present corresponding results for the Logistic and Probit models in Section \ref{sec:numerical} and for the Noisy-signed model in Appendix \ref{sec:noisysigned}. 
%
\\
\subsection{Related work}\label{sec:prior}
Over the past two decades there has been a  long list of works that derive statistical guarantees for high-dimensional estimation problems. Many of these are concerned with convex optimization-based inference methods. Our work is most closely related to the following three lines of research.

\vp
\noindent\emph{(a)\,Sharp asymptotics for linear measurements.} Most of the results in the literature of high-dimensional statistics are order-wise in nature. Sharp asymptotic predictions have only more recently appeared in the literature for the case of noisy linear measurements with Gaussian measurement vectors. There are by now three different approaches that have been used  towards asymptotic analysis of convex regularized estimators: \\
i) the one that is based on the approximate message passing (AMP) algorithm and its state-evolution analysis, e.g., \cite{AMP,DMM,bayati2011dynamics,montanariLasso,donoho2016high,bu2019algorithmic,mousavi2018consistent}.\\
ii) the one that is based on Gaussian process (GP) inequalities, specifically the convex Gaussian min-max Theorem (CGMT) e.g., \cite{StoLASSO,OTH13,COLT,Master,TSP18,miolane2018distribution}.\\
iii) the ``leave-one-out" approach \cite{karoui2013asymptotic,karoui15}. The three approaches are quite different to each other and each comes with its unique distinguishing features and disadvantages. A detailed comparison is beyond our scope.

Our results in Theorems \ref{sec:lem} and \ref{thm:opt_loss} for achieving the best performance across all loss functions is complementary to \cite[Theorem 1]{bean2013optimal} and \cite{advani2016statistical} in which the authors also proposed a method for deriving optimal loss function and measuring its performance, albeit for \emph{linear} models. Instead, we study binary models. The optimality of regularization for linear measurements, is recently studied in \cite{celentano2019fundamental}. 

In terms of analysis, we follow the GP approach and build upon the CGMT. 
Since the previous works are concerned with linear measurements, they consider estimators that solve minimization problems of the form
\bea\label{eq:gen_opt2}
\xh := \arg\min_\x \sum_{i=1}^{m} \widetilde{\ell}(y_i-\ab_i^T\x) + r R(\x)
\eea
Specifically, the loss function $\widetilde\ell$ penalizes the residual. In this paper, we show that the CGMT is applicable to optimization problems in the form of \eqref{eq:gen_opt}. For our case of binary observations, \eqref{eq:gen_opt} is more general than \eqref{eq:gen_opt2}. To see this, note that for $y_i\in\pm 1$ and popular symmetric loss functions $\widetilde{\ell}(t)=\widetilde{\ell}(-t)$, e.g. least-squares (LS), \eqref{eq:gen_opt} results in \eqref{eq:gen_opt2} by choosing $\ell(t)=\widetilde\ell(t-1)$ in the former. Moreover, \eqref{eq:gen_opt} includes several other popular loss functions such as the logistic loss and the hinge-loss which cannot be expressed by \eqref{eq:gen_opt2}.


\vp
\noindent\emph{(b)\,One-bit compressed sensing.} Our work naturally relates to the literature on one-bit compressed sensing (CS) \cite{boufounos20081}. The vast majority of performance guarantees for one-bit CS are order-wise in nature, e.g., \cite{jacques2013robust,plan2013one,plan2012robust,PV15,genzel2017high,xu2018taking}. To the best of our knowledge, the only existing sharp results are presented in \cite{NIPS} for Gaussian measurement vectors, which studies the  asymptotic performance of regularized LS. Our work can be seen as a direct extension of \cite{NIPS} to loss functions beyond least-squares; see Section  \ref{sec:LS} for details.

Similar to the generality of our paper, \cite{genzel2017high} also studies the high-dimensional performance of general loss functions. However, in contrast to our results, their performance bounds are loose (order-wise); as such, they are not informative about the question of optimal performance which we also address here.

\vp
\noindent\emph{(c)\,Classification in high-dimensions.} In \cite{candes2018phase,sur2019modern} the authors study the high-dimensional performance of maximum-likelihood (ML) estimation for the logistic model. The ML estimator is a special case of \eqref{eq:gen_opt} and we consider general binary models. Also, their analysis is based on the AMP. The asymptotics of logistic loss under different classification models has also been recently studied in \cite{mai2019large}. In yet another closely related recent work \cite{salehi2019impact}, the authors extend the results of \cite{sur2019modern} to regularized ML by using the CGMT. Instead, we present results for general loss functions and for general measurement models. Importantly, we also study performance bounds and optimal loss functions. 

Finally, we remark on the following closely related parallel works. While this paper was being under review, the CGMT has been applied by \cite{montanari2019generalization} and \cite{deng2019model} to determine the generalization performance of max-margin linear classifiers in a binary classification setting. In essence, these results are complementary to the results of our paper in the following sense. Consider a binary classification setting under the logistic model and Gaussian regressors. As discussed in Section \ref{sec:logloss}, the optimal set of  \eqref{eq:gen_opt} is bounded with probability approaching one if and only if $\delta>\delta_f^\star$, for appropriate threshold $\delta_f^\star$ determined for first time in \cite{candes2018phase} (see also Figure \ref{fig:figure2}). Our results hold in this regime. In contrast, the papers \cite{deng2019model} and \cite{montanari2019generalization} study the regime $\delta<\delta_f^\star$.	
Also a preliminary version of the results of this paper was published in \cite{taheri2019sharp}.

%

%
\section{Sharp performance guarantees}\label{sec:gen}

\subsection{Definitions}
\noindent\textbf{Moreau envelopes.}~Before stating the first result we need a definition. We write
$$\env{\ell}{x}{\la}:=\min_{v}\frac{1}{2\la}(x-v)^2 + \ell(v),$$
for the \emph{Moreau envelope function} of the loss $\ell:\R\rightarrow\R$ at $x$ with parameter $\la>0$. The minimizer (which is unique by strong convexity) is known as the \emph{proximal operator} of $\ell$ at $x$ with parameter $\la$ and we denote it as $\prox{\ell}{x}{\la}$.
%
A useful property of the Moreau envelope function is that it is continuously differentiable with respect to both $x$ and $\la$ \cite{rockafellar2009variational}. We denote these derivatives as follows
\bea\nn
\envdx{\ell}{x}{\la}&:=\frac{\partial{\env{\ell}{x}{\la}}}{\partial x},\nn\\
\envdla{\ell}{x}{\la}&:=\frac{\partial{\env{\ell}{x}{\la}}}{\partial \la}.\nn
\eea


\noindent\textbf{A system of equations.}~As we show shortly, the asymptotic performance of the optimization in \eqref{eq:gen_opt} is tightly connected to the solution of a certain system of nonlinear equations, which we introduce here. 
Specifically, define random variables $G,S$ and $Y$ as follows:
\bea\label{eq:GSY}
G,S\overset{\tiny \text{i.i.d.}}{\sim}\Nn(0,1) \quad\text{and}\quad Y=f(S),
\eea
and consider the following system of non-linear equations in three unknowns $(\mu,\alpha\geq0,\la \geq 0)$:
\begin{subequations}\label{eq:eq_main}
\bea
 \Exp\bigg[Y\, S \cdot\envdx{\ell}{\ourx}{\la}  \bigg]&=0 , \label{eq:mu_main}\\
 {\la^2}\,{\delta}\,\Exp\bigg[\,\left(\envdx{\ell}{\ourx}{\la}\right)^2\,\bigg]&=\alpha^2 ,
\label{eq:alpha_main}\\
\lambda\,\delta\,\E\bigg[ G\cdot \envdx{\ell}{\ourx}{\la}  \bigg]&=\alpha.
\label{eq:lambda_main}
\eea
\end{subequations}
The expectations are with respect to the randomness of the random variables $G$, $S$ and $Y$. We remark that the equations are well defined even if the loss function $\ell$ is not differentiable. In Section \ref{sec:ME} we summarize some well-known properties of the Moreau Envelope function and use them to simplify \eqref{eq:eq_main} for differentiable loss functions. 

%


\subsection{Asymptotic prediction}
We are now ready to state our first main result.
\begin{thm}[Sharp asymptotics]\label{thm:main}
 Let Assumption \ref{ass:Gaussian} hold and assume $\delta>1$ such that the set of minimizers in \eqref{eq:gen_opt} is bounded and the system of equations \eqref{eq:eq_main} has a unique solution $(\mu,\al\geq 0,\la\geq 0)$, such that $\mu\neq 0$.  Let $\xh_\ell$ be as in \eqref{eq:gen_opt}. Then, in the limit of $m,n\rightarrow+\infty$, $m/n\rightarrow\delta$, it holds with probability one that

\bea\label{eq:corr_thm}
\lim_{n\rightarrow \infty} \corr{\xh_\ell}{\x_0} = \frac{\mu}{\sqrt{\mu^2+\al^2}}.
\eea

\noindent Moreover, 
\bea\label{eq:norm_thm}
\lim_{n\rightarrow \infty} \left\|\xh_\ell-\mu\cdot\frac{\x_0}{\|\x_0\|_2}\right\|_2^2 = \al^2.
\eea
\end{thm}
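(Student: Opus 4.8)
The plan is to apply the Convex Gaussian Min-max Theorem (CGMT) to the ERM problem \eqref{eq:gen_opt}. First I would rewrite the optimization in a form amenable to the CGMT. Writing $\x = \mu\,\x_0 + \Pi_{\x_0^\perp}\x$ and decomposing each inner product $\ab_i^T\x$, I would isolate the "signal" part $\mu\,(\ab_i^T\x_0)$ from the orthogonal part. Since the $\ab_i$ are i.i.d.\ Gaussian and $\|\x_0\|_2=1$, the quantities $s_i := \ab_i^T\x_0$ are i.i.d.\ $\Nn(0,1)$ and are independent of the projection of $\ab_i$ onto $\x_0^\perp$; this is exactly the source of the random variables $S$ and $Y=f(S)$ in \eqref{eq:GSY}. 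Introducing a new variable $\vb$ for the error component and an auxiliary vector via the identity $\ell(t) = \max_{u}\{ut - \ell^*(u)\}$ (or, more directly, writing the sum as a minimization over an auxiliary "residual" vector coupled to a Gaussian matrix), the cost takes the bilinear Gaussian form $\min_\w \max_\u \tfrac1m \u^T \A \w + (\text{separable terms})$ required to invoke the CGMT. The measurement matrix $\A$ with rows $\ab_i$ restricted to $\x_0^\perp$ plays the role of the Gaussian matrix.

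Next I would pass to the Auxiliary Optimization (AO) problem given by the CGMT: $\A\w$ is replaced by $\|\w\|_2\,\g + \|\u\|_2\,\h$ for independent Gaussian vectors $\g,\h$. The key step is then the scalarization of the AO: exploiting rotational invariance and the separability of $\ell$ across samples, the AO concentrates (uniformly over the relevant compact sets, using that the minimizer set of \eqref{eq:gen_opt} is bounded) onto a deterministic optimization over a constant number of scalar variables — essentially the norm $\al$ of the error component, the signal coefficient $\mu$, and a Lagrange/proximal-scaling parameter. At this point the empirical average $\tfrac1m\sum_i \ell(\cdot)$ is replaced, by the (weak) law of large numbers, by the expectation $\Exp[\,\cdot\,]$ over $(G,S,Y)$, and the inner minimization over the residual variable is recognized as a Moreau envelope $\env{\ell}{\ourx}{\la}$. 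Writing down the first-order stationarity conditions of this limiting scalar problem in $(\mu,\al,\la)$, and using the identities for $\partial_x \env{\ell}{x}{\la}$ recalled before the theorem, yields precisely the system \eqref{eq:eq_main}. The hypothesis that \eqref{eq:eq_main} has a unique solution with $\mu\neq0$ guarantees the scalar AO has a unique well-separated minimizer, so the CGMT transfers the conclusion back to the original problem: the error component has norm $\to\al$, giving \eqref{eq:norm_thm}, and since $\xh_\ell = \mu\,\x_0 + (\text{error}\perp\x_0)$ with $\|\x_0\|_2=1$, the correlation is $\mu/\sqrt{\mu^2+\al^2}$, giving \eqref{eq:corr_thm}.

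I expect the main obstacle to be the rigorous scalarization of the AO and the attendant uniform concentration arguments. Several technical points need care: (i) the objective in \eqref{eq:gen_opt} is not strongly convex in general (e.g.\ hinge, logistic), so one cannot assume a priori compactness — this is why boundedness of the minimizer set is assumed, but one still must localize the AO to a compact set and argue the truncation is harmless; (ii) the loss $\ell$ may be non-differentiable, so the manipulations must be phrased via Moreau envelopes and subgradients rather than derivatives, and one must verify the envelope-based system \eqref{eq:eq_main} is the correct stationarity condition; (iii) interchanging the limit, the min-max, and the expectation (LLN for the separable sum) requires uniform integrability / Lipschitz control on $\env{\ell}{\cdot}{\la}$ and its $x$-derivative, uniformly over the compact parameter ranges. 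A further subtlety is checking the CGMT's convexity-in-$\w$ / concavity-in-$\u$ requirements after the reformulation, and ensuring the sign constraints $\al\ge0$, $\la\ge0$ emerge naturally. Once these are in place, deriving \eqref{eq:corr_thm}–\eqref{eq:norm_thm} from the scalar optimum is routine.
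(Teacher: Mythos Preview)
Your proposal is correct and follows essentially the same route as the paper's proof: decompose $\x$ along $\x_0$ and its orthogonal complement (the paper does this via the rotation $\x_0=[1,0,\ldots,0]^T$), introduce a dual/auxiliary variable to expose a bilinear Gaussian term, apply the CGMT, scalarize the AO over $(\mu,\alpha,\tau,\gamma)$ using a square-root linearization trick, recognize the Moreau envelope, pass to the deterministic limit via the LLN, and read off \eqref{eq:eq_main} as the first-order optimality conditions. The technical caveats you flag (compactness/localization, convexity--concavity of the reformulated objective, and uniform concentration) are exactly the points the paper handles or defers to prior CGMT literature.
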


%

Theorem \ref{thm:main} holds for general loss functions. In Section \ref{sec:cases} we specialize the result to specific popular choices and also present numerical simulations that confirm the validity of the predictions (see Figures. \ref{fig:figure1}--\ref{fig:figure3} and \ref{fig:eppointone}--\ref{fig:eppointtwentyfive}). Before that, we include a few remarks on the conditions, interpretation and implications of the theorem. The proof is deferred to Appendix \ref{sec:proof} and uses the convex Gaussian min-max theorem (CGMT) \cite{COLT,Master}.

\begin{remark}[The role of $\mu$ and $\alpha$]
According to \eqref{eq:corr_thm}, the prediction for the limiting behavior of the correlation value is given in terms of  an effective noise parameter $\sigma_\ell:=\alpha\big/\mu$, where $\mu$ and $\alpha$ are unique solutions of \eqref{eq:eq_main}. \emph{The smaller the value of $\sigma_\ell$ is, the larger becomes the correlation value.} While the correlation value is fully determined by the ratio of $\alpha$ and $\mu$, their individual role is clarified in \eqref{eq:norm_thm}. Specifically, according to \eqref{eq:norm_thm}, $\xh_{\ell}$ is a biased estimate of the true $\x_0$ and $\mu$ represents exactly that bias term. In other words, solving \eqref{eq:gen_opt} returns an estimator that is close to a $\mu$--scaled version of $\x_0$. When $\x_0$ and $\xh_{\ell}$ are scaled appropriately, the $\ell_2$-norm of their difference converges to $\alpha$.
\end{remark}

\begin{remark}[Why $\delta>1$] The theorem requires that $\delta>1$ (equivalently, $m>n$). Here, we show that this condition is \emph{necessary} for the equations \eqref{eq:eq_main} to have a bounded solution.
To see this, take squares in both sides of \eqref{eq:lambda_main} and divide by \eqref{eq:alpha_main}, to find that 
$$
\delta = \frac{\Exp\left[\,\left(\envdx{\ell}{\ourx}{\la}\right)^2\,\right]}{\left(\E\left[ G\cdot \envdx{\ell}{\ourx}{\la} \right]\right)^2} \geq 1.
$$
The inequality follows by applying Cauchy-Schwarz and using the fact that $\E[G^2]=1$.
\end{remark}
\begin{remark}[On the existence of a solution to \eqref{eq:eq_main}]\label{rem:delta_f}
 While $\delta>1$ is a necessary condition for the equations in \eqref{eq:eq_main} to have a solution, it is \emph{not} sufficient in general. This depends on the specific choice of the loss function. For example, in Section \ref{sec:LS}, we show that for the squared loss $\ell(t)=(t-1)^2$, the equations have a unique solution iff $\delta>1$. On the other hand, for  logistic-loss and  hinge-loss, it is argued in Section \ref{sec:logloss} that there exists a threshold value $\delta^\star_{f}>2$ such that the set of minimizers in \eqref{eq:gen_opt} is unbounded if $\delta<\delta^\star_{f}$. In this case, Theorem \ref{thm:main} does not hold. 
 We conjecture that for these choices of loss, the equations \eqref{eq:eq_main} are solvable iff  $\delta>\delta^\star_{f}$. Justifying this conjecture
and further studying more general sufficient and necessary conditions under which the equations \eqref{eq:eq_main} admit a solution is left to future work. However, in what follows, given such a solution, we prove that it is unique for a wide class of convex-loss functions of interest.
\end{remark}


\begin{remark}[On the uniqueness of solution to \eqref{eq:eq_main}]\label{rem:unique}
We show that if the system of equations in \eqref{eq:eq_main} has a solution, then it is unique provided that $\ell$ is strictly convex, continuously differentiable and its derivative satisfies $\elld(0)\neq 0$. For instance, this class includes the square, the logistic and the exponential losses. However, it excludes non-differentiable functions such as the LAD and hinge-loss. We believe that the differentiability assumption can be relaxed without major modification in our proof, but we leave this for future work. Our result is summarized in Proposition \ref{propo:Unique} below.
\begin{propo}\label{propo:Unique}
 Assume that the loss function $\ell:\R\rightarrow\R$ has the following properties: (i) it is proper strictly convex; (ii) it is continuously differentiable and its derivative $\ellp$ is such that $\ellp(0)\neq 0$. Further assume that the (possibly random) link function $f$ is such that $SY=Sf(S),~S\sim\Nn(0,1)$ has strictly positive density on the real line. The following statement is true. For any $\delta>1$, if the system of equations in \eqref{eq:eq_main} has a bounded solution, then it is unique. 
\end{propo}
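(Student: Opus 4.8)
The plan is to reparametrize the system \eqref{eq:eq_main} so that two of the three unknowns are eliminated, reducing everything to a single scalar equation whose monotonicity we can control. First I would introduce the proximal operator explicitly: since $\ell$ is strictly convex and differentiable, the Moreau envelope derivative has the clean form $\envdx{\ell}{x}{\la} = \frac{1}{\la}\big(x - \prox{\ell}{x}{\la}\big) = \ellp\big(\prox{\ell}{x}{\la}\big)$, so setting $\rho := \prox{\ell}{\ourx}{\la}$ we may write all three equations in terms of the single random variable $\ellp(\rho)$. Denoting $A := \al G + \mu S Y$, equations \eqref{eq:mu_main}, \eqref{eq:alpha_main}, \eqref{eq:lambda_main} become $\Exp[YS\,\ellp(\rho)] = 0$, $\la^2\delta\,\Exp[\ellp(\rho)^2] = \al^2$, and $\la\delta\,\Exp[G\,\ellp(\rho)] = \al$. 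The idea is to think of $(\mu,\al,\la)$ as parametrized by a single variable — my first instinct is to use the ratio $\sigma_\ell = \al/\mu$, since that is what actually determines the correlation and what the remark flags as the object of interest — and show that the remaining two equations pin down $\mu$ and $\la$ uniquely as functions of $\sigma_\ell$, after which the third equation becomes a scalar equation in $\sigma_\ell$ alone that has at most one root.

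The key steps, in order, would be: (1) Gaussian integration by parts (Stein's lemma) applied to $\Exp[G\,\ellp(\rho)]$: differentiating $\rho$ with respect to $G$ gives $\partial_G \rho = \al\, \ellp(\rho)\big/\big(1 + \la\,\elldd(\rho)\big)$ using the implicit definition $\rho + \la\ellp(\rho) = A$, so $\Exp[G\,\ellp(\rho)] = \al\,\Exp\!\big[\elldd(\rho)\big/(1+\la\elldd(\rho))\big]$; this rewrites \eqref{eq:lambda_main} in a form free of the explicit $G$-weight and manifestly positive, using strict convexity ($\elldd > 0$). (2) Similarly rewrite \eqref{eq:mu_main}: integrate by parts in $S$ (treating $Y = f(S)$, with the hypothesis that $SY$ has strictly positive density ensuring the relevant conditional expectations are well-behaved and nondegenerate) to turn $\Exp[YS\,\ellp(\rho)]$ into an expression that, combined with the positivity of $\ellp(0)\ne 0$, forces $\mu$ to have a definite sign and to be determined monotonically. (3) Fix $\sigma := \al/\mu$ and view $(\mu,\la)$ as unknowns of the two-equation subsystem \eqref{eq:mu_main}--\eqref{eq:alpha_main}; show via the monotonicity of the Moreau envelope in $\la$ (it is concave and increasing in $\la$, so its $x$-derivative is monotone in $\la$) and a scaling argument in $\mu$ that this subsystem has a unique solution $(\mu(\sigma), \la(\sigma))$, with both depending monotonically on $\sigma$. (4) Substitute into \eqref{eq:lambda_main} and show the resulting scalar function of $\sigma$ is strictly monotone, hence has at most one zero; combined with the assumed existence of a bounded solution, this gives uniqueness.

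I expect the main obstacle to be step (3)–(4): establishing genuine (strict) monotonicity of the reduced scalar map in $\sigma_\ell$. The envelope-derivative quantities $\Exp[\ellp(\rho)^2]$, $\Exp[\elldd(\rho)/(1+\la\elldd(\rho))]$ depend on $\sigma$ both through the explicit coefficients $\al,\mu$ in $A = \al G + \mu SY$ and implicitly through $\la(\sigma)$, and these two dependencies could a priori push in opposite directions; ruling that out requires carefully signing several coupled partial derivatives, and this is exactly where the structural hypotheses — strict convexity giving $0 < \elldd$, continuous differentiability letting us differentiate under the expectation, $\ellp(0)\ne 0$ breaking the symmetry that would otherwise allow $\mu = 0$ or sign ambiguity, and the strictly-positive-density condition on $SY$ preventing the $S$-integrals from degenerating — must all be used. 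A secondary technical point is justifying the integration-by-parts and differentiation-under-the-integral steps (dominated convergence / integrability of $\ellp(\rho)$, $\elldd(\rho)$ against Gaussian tails), which should follow from convexity-based growth bounds on the proximal operator but needs to be stated. Once monotonicity is in hand, uniqueness is immediate, and I would also note that the sign constraints $\al \ge 0$, $\la \ge 0$ from the theorem statement are consistent with and reinforced by the signs obtained in steps (1)–(2).
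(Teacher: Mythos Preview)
Your strategy is quite different from the paper's, and the crucial monotonicity step you flag as the ``main obstacle'' is a genuine gap that your sketch does not close. The paper sidesteps the scalar-reduction route entirely by recognizing a variational structure: the system \eqref{eq:eq_main} is exactly the set of first-order optimality conditions (after the change of variables $\la = \tau/\gamma$) of the deterministic saddle-point problem
\[
\min_{\alpha>0,\mu,\tau>0}~\max_{\gamma>0}~ F(\alpha,\mu,\tau,\gamma) := \frac{\gamma\tau}{2}-\frac{\alpha\gamma}{\sqrt{\delta}}+ \mathbb{E}\Big[\env{\ell}{\alpha G+\mu YS}{\tfrac{\tau}{\gamma}} \Big].
\]
The paper then proves that $F$ is \emph{strictly} jointly convex in $(\alpha,\mu,\tau)$ and \emph{strictly} concave in $\gamma$; uniqueness of the saddle point --- and hence of the solution to \eqref{eq:eq_main} --- is then a one-line convex-analysis argument. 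The technical core is showing that the \emph{expected} Moreau envelope $\Omega(\alpha,\mu,\tau,\gamma):=\E\big[\env{\ell}{\alpha G+\mu SY}{\tau/\gamma}\big]$ is strictly convex in $(\alpha,\mu,\tau)$: the pointwise envelope $\env{\ell}{\alpha g+\mu sy}{\tau}$ is merely convex, and it is the averaging over $(G,S)$ that upgrades convexity to strict convexity. All three hypotheses enter precisely here, through the lemma that for any two distinct triples $(\alpha_i,\mu_i,\la_i)$ the proximal maps $\prox{\ell}{\alpha_i x+\mu_i z}{\la_i}$ differ on a set of positive $(x,z)$-measure --- strict convexity of $\ell$ then makes the Jensen inequality strict on that set. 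The condition $\ellp(0)\neq 0$ is what rules out the degenerate case $\la_1\neq\la_2$ with coincident proximal operators.

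By contrast, your plan asks you to sign coupled partial derivatives of implicitly defined functions $\mu(\sigma),\la(\sigma)$, and you have not given any mechanism for doing so; listing the hypotheses that ``must all be used'' is not an argument. There are also two smaller problems: your step (1) requires $\ell$ to be twice differentiable (you invoke $\elldd$), whereas the proposition assumes only continuous differentiability; and the formula there should read $\partial_G\rho = \alpha/(1+\la\elldd(\rho))$, without the extra $\ellp(\rho)$ factor. Even with those patched, the monotonicity you need in steps (3)--(4) is exactly the kind of delicate implicit-derivative computation that the paper's variational argument is designed to avoid.
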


\noindent The detailed proof of Proposition \ref{propo:Unique} is deferred to Appendix \ref{sec:unique}. Here, we highlight some key ideas. The CGMT relates --in a rather natural way--  the original ERM optimization \eqref{eq:gen_opt} to the following deterministic min-max optimization on four variables
\bea\label{eq:DO}
\min_{\alpha>0,\mu,\tau>0}~\max_{\gamma>0}~F(\alpha,\mu,\tau,\gamma) := \frac{\gamma\tau}{2}-\frac{\alpha\gamma}{\sqrt{\delta}}+ \mathbb{E}\left[\env{\ell}{\alpha G+\mu YS}{\frac{\tau}{\gamma}} \right].
\eea
In Appendix \ref{sec:B4}, we show that the optimization above is convex-concave for any lower semi-continuous, proper, convex function $\ell:\R\rightarrow\R$. Moreover, it is shown that one arrives at the system of equations in \eqref{eq:eq_main} by simplifying the first-order optimality conditions of the min-max optimization in \eqref{eq:DO}. This connection is key to the proof of Proposition \ref{propo:Unique}. Indeed, we prove uniqueness of solution (if such a solution exists) to \eqref{eq:eq_main}, by proving instead that the function $F(\alpha,\mu,\tau,\gamma)$ above is (jointly) \emph{strictly} convex in $(\alpha,\mu,\tau)$ and \emph{strictly} concave in $\gamma$, provided that $\ell$ satisfies the conditions of the proposition. Next, let us briefly discuss how strict convex-concavity of \eqref{eq:DO} can be shown. For concreteness, we only discuss strict convexity here; the ideas are similar for strict concavity. At the heart of the proof of strict convexity of $F$ is understanding the properties of the \emph{expected Moreau envelope function} $\Omega:\R_+\times\R\times\R_+\times\R_+\rightarrow\R$ defined as follows:
$$\Omega(\al,\mu,\tau,\gamma):=\E\left[\env{\ell}{\al G + \mu YS}{\frac{\tau}{\gamma}}\right].$$
Specifically, we prove in Proposition \ref{propo:EME_sum} in Appendix \ref{sec:strict_EME} that  if $\ell$ is strictly convex, differentiable and does not attain its minimum at $0$, then $\Omega$ is strictly convex in $(\alpha,\mu,\tau)$ and strictly concave in $\gamma$. It is worth noting that the Moreau envelope function $\env{\ell}{\al g + \mu ys}{\tau}$ for fixed $g,s$ and $y=f(s)$ is \emph{not} necessarily strictly convex. Interestingly, we show that the \emph{expected} Moreau envelope has this desired feature. We refer the reader to Appendices \ref{sec:strict_EME} and   \ref{sec:unique} for more details.
\end{remark}

\section{On optimal performance}\label{sec:opt}

\subsection{Fundamental limitations}\label{sec:upper}
In this section, we establish fundamental limits on the performance of \eqref{eq:gen_opt} by deriving an upper bound on the absolute value of correlation $\corr{\xh_\ell}{\x_0}$ that holds for \emph{all} choices of  loss functions satisfying Theorem \ref{thm:main}. The result builds on the  prediction of Theorem \ref{thm:main}.
In view of \eqref{eq:corr_thm} upper bounding correlation is equivalent to lower bounding the effective noise parameter $\sigma_\ell=\alpha/\mu$. Theorem \ref{sec:lem} below derives such a lower bound. 
%

For a random variable $H$ with density $p_H(h)$ that has a derivative $p_H^{\prime}(h), \forall h\in\mathbb{R}$, we denote its score function $\ksi_H(h):=\frac{\partial}{\partial h}{\log p_H(h)}=\frac{p_H^{\prime}(h)}{p_H(h)}$. Then, the Fisher information of $H$  is defined as follows (e.g. \cite[Sec.~2]{barron1984monotonic}):
$$\Ic(H):=\Exp\Big[\,(\ksi_H(H))^2\,\Big].$$

\begin{thm}[Best achievable performance]\label{sec:lem} Let the assumptions and notation of Theorem \ref{thm:main} hold and recall the definition of random variables $G,S$ and $Y$ in \eqref{eq:GSY}. For $\sigma > 0$, define a new random variable $W_\sigma := \sig G+SY,$ and the function $\kappa:(0,\infty]\rightarrow[0,1]$ as follows,
\begin{align*}
\kappa(\sigma):= \frac{\sig^2\left(\sig^2 \Ic(W_\sig)+\Ic(W_\sig)-1\right)}{1+\sig^2\left(\sig^2\Ic(W_\sig)-1\right)}.
\end{align*}
%
Further define $\sigma_{\rm{opt}}$ as follows,
\bea \label{eq:lem}
\sig_{\rm{opt}} := \min\left\{\sig\geq 0: \kappa(\sigma)  = \frac{1}{\delta}\right\}.
\eea
Then, for $\sigma_{\ell}:= \frac{\alpha}{\mu}$ it holds that $\sigma_\ell\geq \sig_{\rm{opt}}$.
%
\end{thm}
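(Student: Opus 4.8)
The plan is to lower bound the effective noise $\sigma_\ell = \alpha/\mu$ by extracting information from the three equations in \eqref{eq:eq_main} and combining them cleverly. The key observation is that the quantity $\envdx{\ell}{\ourx}{\la}$ acts like a ``test statistic'' applied to the random variable $W := \alpha G + \mu Y S$, and the three equations constrain its correlation with $YS$, with $G$, and its second moment. After rescaling by $\mu$ (note $\ourx = \mu(\sigma_\ell G + YS)$, so it is natural to work with $W_\sigma$ for $\sigma = \sigma_\ell$), I would introduce the auxiliary function $h(w) := \envdx{\ell}{\mu w}{\la}$, a deterministic function of $w$, and rewrite \eqref{eq:mu_main}, \eqref{eq:alpha_main}, \eqref{eq:lambda_main} as moment conditions: $\E[YS\, h(W_\sigma)]=0$, $\lambda^2\delta\,\E[h(W_\sigma)^2] = \alpha^2$, and $\lambda\delta\,\E[G\,h(W_\sigma)] = \alpha$. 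The first equation says $h(W_\sigma)$ is uncorrelated with $YS$ after conditioning structure is taken into account; the goal is to show that any such $h$ forces $\delta$ to be small unless $\sigma$ is large, which is exactly what $\kappa(\sigma) \le 1/\delta$ encodes.

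**Key steps.** First I would use integration by parts (Stein-type identities) to rewrite $\E[G\, h(W_\sigma)]$ in terms of $\E[h'(W_\sigma)]$: since $W_\sigma = \sigma G + YS$ with $G$ standard normal independent of $(S,Y)$, conditionally on $(S,Y)$ we get $\E[G\,h(W_\sigma)\mid S,Y] = \sigma\,\E[h'(W_\sigma)\mid S,Y]$, hence $\E[G\,h(W_\sigma)] = \sigma\,\E[h'(W_\sigma)]$. Next, dividing the square of \eqref{eq:lambda_main} by \eqref{eq:alpha_main} eliminates $\lambda$ and $\alpha$ and yields $\delta = \E[h(W_\sigma)^2]\big/\big(\E[G\,h(W_\sigma)]\big)^2 = \E[h(W_\sigma)^2]\big/\big(\sigma^2(\E[h'(W_\sigma)])^2\big)$. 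So the whole problem reduces to: over all functions $h$ satisfying the orthogonality constraint $\E[YS\,h(W_\sigma)]=0$, minimize the Rayleigh-type quotient $\E[h(W_\sigma)^2]/(\E[h'(W_\sigma)])^2$; if this minimum equals $1/\kappa(\sigma)$, then $\delta \le 1/\kappa(\sigma)$, i.e. $\kappa(\sigma_\ell) \le 1/\delta$, and since $\kappa$ is (one would check) monotonic in the relevant range, $\sigma_\ell \ge \sigma_{\rm opt}$. The remaining calculation is a constrained variational problem: introduce a Lagrange multiplier for the constraint $\E[YS\,h(W_\sigma)]=0$ and a normalization, and solve the Euler–Lagrange equation. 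The unconstrained minimizer of $\E[h^2]/(\E[h'])^2$ is $h \propto \xi_{W_\sigma}$, the score function of $W_\sigma$ (this is the standard Fisher-information extremal), giving value $1/\Ic(W_\sigma)$; the orthogonality constraint against $YS$ perturbs this, and carrying out the projection of the score onto the orthogonal complement of $\mathrm{span}\{YS\}$ (in $L^2(W_\sigma)$) produces precisely the correction terms $\sigma^2\Ic(W_\sigma) - 1$ and the denominator $1 + \sigma^2(\sigma^2\Ic(W_\sigma)-1)$ appearing in $\kappa(\sigma)$. I would also need the identity relating $\E[YS\,\xi_{W_\sigma}(W_\sigma)]$ or $\E[(YS)\,W_\sigma]$-type quantities to $\sigma^2\Ic(W_\sigma)-1$, which again follows from Stein's lemma applied to the Gaussian part.

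**Main obstacle.** The hard part will be the constrained variational argument: showing rigorously that the orthogonality condition $\E[YS\,h(W_\sigma)]=0$ is exactly the right constraint and that the minimum of the quotient over all admissible $h$ (not just smooth ones) equals $1/\kappa(\sigma)$, including verifying the boundary/integrability conditions needed for the integration-by-parts steps and confirming the extremal $h$ is attainable in the limit. One must be careful that $h = \envdx{\ell}{\mu\,\cdot}{\lambda}$ is a legitimate competitor (it is, by the properties of Moreau envelopes it is Lipschitz, hence in the right Sobolev-type class), so the inequality $\delta \le 1/\kappa(\sigma_\ell)$ holds for the actual solution. A secondary subtlety is monotonicity/well-definedness of $\kappa$ and the ``$\min$'' in \eqref{eq:lem}: I would verify $\kappa$ is continuous on $(0,\infty]$ with $\kappa(\sigma)\to$ a limit $\ge 1$ as $\sigma\to\infty$ (using that $W_\sigma$ becomes approximately Gaussian with variance $\sigma^2$, so $\sigma^2\Ic(W_\sigma)\to 1$) and $\kappa(\sigma)\to 0$ as $\sigma\to 0^+$, so that the level set $\{\kappa(\sigma)=1/\delta\}$ is nonempty for every $\delta>1$ and the minimum is achieved — this also reconnects with the earlier remark that $\delta>1$ is necessary.
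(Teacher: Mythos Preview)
Your overall strategy is essentially the paper's: reduce to a function $h$ of $W_\sigma=\sigma G+SY$ with $\sigma=\sigma_\ell$, use Stein/integration-by-parts to turn $\E[G\,h(W_\sigma)]$ into a score-function quantity, and then use a Cauchy--Schwarz/variational bound to relate $\delta$ to $\Ic(W_\sigma)$. However, several points in your write-up either slip or take a harder route than the paper does.

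\textbf{Inequality direction.} From $\delta=\E[h(W_\sigma)^2]\big/\big(\sigma^2(\E[h'(W_\sigma)])^2\big)$, minimizing the right-hand side over admissible $h$ gives a \emph{lower} bound on $\delta$, hence $\delta\ge 1/\kappa(\sigma_\ell)$, i.e.\ $\kappa(\sigma_\ell)\ge 1/\delta$. You wrote the reverse inequality, which would not yield $\sigma_\ell\ge\sigma_{\rm opt}$. With the correct direction, the conclusion follows by continuity of $\kappa$ and the intermediate value theorem (since $\kappa(0^+)=0$): you do \emph{not} need monotonicity of $\kappa$, and indeed the paper neither assumes nor proves it.

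\textbf{Handling the constraint.} The constraint $\E[YS\,h(W_\sigma)]=0$ is not a condition inside $L^2(W_\sigma)$ as written, because $YS$ is not a function of $W_\sigma$. You would need Tweedie's formula $\E[YS\mid W_\sigma]=W_\sigma+\sigma^2\,\xi_{W_\sigma}(W_\sigma)$ to convert it into a bona fide $L^2(W_\sigma)$-orthogonality condition (to $W_\sigma+\sigma^2\xi_{W_\sigma}$). The paper sidesteps this entirely: it first combines \eqref{eq:mu_main} and \eqref{eq:lambda_main} to obtain $\E[W_\sigma\,h(W_\sigma)]=\sigma^2/\delta$ and uses \eqref{eq:SteinIBP1} to get $\E[\xi_{W_\sigma}(W_\sigma)\,h(W_\sigma)]=-1/\delta$, so that all three conditions involve only $W_\sigma$. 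Then, rather than solving an Euler--Lagrange problem, it applies Cauchy--Schwarz directly to $\alpha_1 W_\sigma+\alpha_2\,\xi_{W_\sigma}(W_\sigma)$ against $h$ and plugs in explicit $(\alpha_1,\alpha_2)$ (the ones you would have found from the Lagrange multiplier), immediately yielding $1/\delta\le\kappa(\sigma_\ell)$. This is the same inequality your variational argument would produce, but obtained in two lines and without any projection bookkeeping.
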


The theorem above establishes an upper bound on the best possible correlation performance among all convex loss functions. In Section \ref{sec:opt_func}, we show that this bound is often tight, i.e. there exists a loss function that achieves the specified best possible performance.

\begin{remark} Theorem \ref{sec:lem} complements the results of \cite{bean2013optimal}, \cite[Lem.~3.4]{donoho2016high} and \cite[Rem.~5.3.3]{Master} in which they consider only linear measurements. In particular, Theorem \ref{sec:lem} shows that it is possible to achieve results of this nature for the more challenging setting of binary observations considered here. 
\end{remark}

\begin{proof}[{Proof of Theorem \ref{sec:lem}}]
Fix a loss function $\ell$ and let $(\mu\neq 0,\alpha>0,\la\geq 0)$ be a solution to \eqref{eq:eq_main}, which by assumptions of Theorem \ref{thm:main} is unique. The first important observation is that the error of a loss function is unique up to a multiplicative constant. To see this, consider an arbitrary loss function $\ell(t)$ and let $\xh_\ell$ be a minimizer in \eqref{eq:gen_opt}. Now consider \eqref{eq:gen_opt} with the following loss function instead, for some arbitrary constants $C_1>0, C_2\neq 0$: 
\begin{align}\label{eq:irrelevant}
\widehat\ell(t):=\frac{1}{C_1}\ell\big(C_2{t}\big).
\end{align}
 It is not hard to see that $\frac{1}{C_2}\,\xh_\ell$ is the minimizer for $\widehat\ell$. Clearly, $\frac{1}{C_2}\xh_\ell$ has the same correlation value with $\x_0$ as $\xh_\ell$, showing that the two loss functions $\ell$ and $\widehat\ell$ perform the same.
With this observation in mind, consider the function $\widehat\ell:\mathbb{R}\rightarrow\mathbb{R}$ such that $\widehat\ell(t)= \frac{\la}{\mu^2}\ell(\mu\,t)$.  Then, notice that
$$
\envdx{\ell}{x}{\la} = \frac{\mu}{\la}\envdx{\widehat\ell}{x/\mu}{1}.
$$
Using this relation in \eqref{eq:eq_main} and setting $\sig:=\sig_{\ell}=\alpha/\mu$, the system of equations in \eqref{eq:eq_main} can be equivalently rewritten in the following convenient form, 
\begin{subequations}\label{eq:eq_main2}
\bea
&\Exp\bigg[Y\, S \cdot\envdx{\widehat\ell}{W_{\sigma} }{1} \bigg]=0 , \label{eq:mu_main2}\\
&\Exp\bigg[\,\left(\envdx{\widehat\ell}{W_{\sigma} }{1}\right)^2\,\bigg]=\sig^2/\delta\,,
\label{eq:alpha_main2}\\
&\E\bigg[ G\cdot \envdx{\widehat\ell}{W_{\sigma} }{1}  \bigg]=\sig/\delta\,.
\label{eq:lambda_main2}
\eea
\end{subequations}
Next, we show how to use \eqref{eq:eq_main2} to derive an equivalent system of equations based on $W_\sig$. Starting with \eqref{eq:lambda_main2} we have
\begin{align}\label{eq:int3}
\E\bigg[ G\cdot \envdx{\widehat\ell}{W_{\sigma} }{1}  \bigg] = \frac{1}{\sig} \iint u\,\envdx{\widehat\ell}{ u+z}{1}  \phi_{\sig}(u)p_{SY}(z)\mathrm{d}u\mathrm{d}z,
\end{align}
where $\phi_{\sig}(u):=p_{\sig G}(u)= \frac{1}{\sig\sqrt{2\pi}}e^{-\frac{u^2}{2\sig^2}}$. Since it holds that $ \phi_{\sig}(u)=\frac{-\sig^2}{u}\phi^\prime_{\sig}(u) $, using \eqref{eq:int3} it follows that 
\begin{align}\label{eq:int4}
\begin{split}
\E\bigg[ G\cdot \envdx{\widehat\ell}{W_{\sigma} }{1}  \bigg]  &= -\sig\iint\envdx{\widehat\ell}{ u+z}{1}  \phi^\prime_{\sig}(u)p_{SY}(z)\mathrm{d}u\mathrm{d}z \\
&= -\sig\iint\envdx{\widehat\ell}{ w}{1}  \phi^\prime_{\sig}(u)p_{SY}(w-u)\mathrm{d}u\mathrm{d}w  =  -\sig\int\envdx{\widehat\ell}{ w}{1}  p^\prime_{W_\sig}(w)\mathrm{d}w, 
\end{split}
\end{align}
where in the last step we used 
$$
p_{W_\sig}^{\prime}(w) = \int \phi_{\sig}^{\prime}(u) p_{SY}(w-u)\, \mathrm{d}u.
$$
Therefore we have by \eqref{eq:int4} that
\bea\label{eq:SteinIBP1}
\E\bigg[ G\cdot \envdx{\widehat\ell}{W_{\sigma} }{1}  \bigg] = -\sigma \,\E\bigg[\envdx{\widehat{\ell}}{W_{\sigma} }{1}\ksi_{W_{\sig}}(W_{\sig})\bigg].
\eea
This combined with \eqref{eq:lambda_main2} gives $\E\left[\envdx{\widehat{\ell}}{W_{\sigma} }{1}\ksi_{W_{\sig}}(W_{\sig})\right]=-1/\delta.$
Second, multiplying \eqref{eq:lambda_main2} with $\sigma^2$ and adding it to \eqref{eq:mu_main2} yields that,
\begin{align}\label{eq:w}
 \Exp\left[W_\sig \cdot\envdx{\widehat\ell}{W_{\sig}}{1} \right]&=\sig^2/\delta ,
\end{align}
Putting these together we conclude with the following system of equations which is equivalent to \eqref{eq:eq_main2},
\begin{subequations}\label{eq:eq_main4}
\bea
&\Exp\bigg[W_\sig \cdot\envdx{\widehat\ell}{W_\sig}{1} \bigg]=\sig^2/\delta\,, \label{eq:mu_main4}\\
&\Exp\bigg[\,\left(\envdx{\widehat\ell}{W_\sig }{1}\right)^2\,\bigg]=\sig^2/\delta\,,
\label{eq:alpha_main4}\\
&\E\bigg[\envdx{\widehat{\ell}}{W_\sig}{1}\ksi_{W_\sig}(W_\sig)\bigg]=-1/\delta\,.
\label{eq:lambda_main4}
\eea
\end{subequations}
Note that for $\sig > 0$, $\ksi_{W_\sig}=p'_{W_\sig}/p_{W_\sig}$ exists everywhere. This is because for all $w\in\mathbb{R}$: $p_{W_\sig}(w) >0$ and $p_{W_{\sig}}(\cdot)$ is continuously differentiable. 
Combining \eqref{eq:mu_main4} and \eqref{eq:lambda_main4} we derive the following equation which holds for $\alpha_1, \alpha_2 \in \mathbb{R}$,
\begin{align*}
 \Exp\left[(\alpha_1W_\sig+\alpha_2\ksi_{W_\sig}(W_\sig)) \cdot\envdx{\widehat\ell}{W_\sig}{1} \right]&=\alpha_1\sig^2/\delta-\alpha_2/\delta.
\end{align*}
By Cauchy-Schwartz inequality we have that
\begin{align}\label{eq:cs}
 \left(\Exp\left[(\alpha_1W_\sig+\alpha_2\ksi_{W_\sig}(W_\sig)) \cdot\envdx{\widehat\ell}{W_\sig}{1} \right]\right)^2 
 \le\, \Exp\bigg[(\alpha_1W_\sig+\alpha_2\ksi_{W_\sig}(W_\sig))^2 \bigg]\Exp\bigg[\left(\envdx{\widehat\ell}{W_\sig}{1} \right)^2\bigg].
\end{align}
Using the fact that $\Exp[W_{\sig}\ksi_{W_\sig}(W_{\sig})]=-1$ (by integration by parts), $\Exp[(\ksi_{W_\sig}(W_\sig))^2]=\Ic(W_\sig)$, $\Exp[W_\sig^2] = \sig^2+1$ and \eqref{eq:alpha_main4}, the right hand side of \eqref{eq:cs} is equal to
\begin{align*}
\left(\alpha_1^2(\sig^2+1)+ \alpha_2^2\,\Ic(W_\sig)-2\alpha_1\alpha_2\right)\,\sig^2/\delta.
\end{align*}
Therefore, we have concluded with the following inequality for $\sigma$,
\begin{align}\label{eq:16}
\delta \sig^2\left(\alpha_1^2(\sig^2+1)+\alpha_2^2\,\Ic(W_\sig)-2\alpha_1\alpha_2\right) \ge (\alpha_1\sig^2-\alpha_2)^2,
\end{align}
which holds for all $\alpha_1,\alpha_2\in\R$. In particular, \eqref{eq:16} holds for the following choice of values for $\alpha_1$ and $\alpha_2$:
\begin{align*} 
\alpha_1=  \frac{1-\sig^2\Ic(W_\sig)}{\delta(\sig^2\Ic(W_\sig)+\Ic(W_\sig)-1)}, \;\;\;\;\; \alpha_2 = \frac{1}{\delta(\sig^2\Ic(W_\sig)+\Ic(W_\sig)-1)}.
\end{align*} 
(The choice above is motivated by the result of Section \ref{sec:opt_func}; see Theorem \ref{thm:opt_loss}).
Rewriting \eqref{eq:16} with the chosen values of $\alpha_1$ and $\alpha_2$ yields the following inequality,
\begin{align}\label{eq:ineq_sig}
\frac{1}{\delta}\le\frac{\sig^2(\sig^2\Ic(W_\sig)+\Ic(W_\sig)-1)}{1+\sig^2(\sig^2\Ic(W_\sig)-1)} = \kappa(\sig),
\end{align}
where in the right-hand side above, we recognize the function $\kappa$ defined in the theorem. 

Next, we use \eqref{eq:ineq_sig} to  show that $\sig_{\rm opt}$ defined in \eqref{eq:lem} yields a lower bound on the achievable value of $\sigma$. For the sake of contradiction, assume that $\sig < \sig_{\rm{opt}}$. By the above, $1/\delta\le \kappa(\sig)$. Moreover, by the definition of $\sig_{\rm{opt}}$ we must have that $1/\delta < \kappa(\sig).$ Since $\kappa(0) = 0$ and $\kappa(\cdot)$ is a continuous function we conclude that for some $\sig_1 \in (0,\sig)$ it holds that $\kappa(\sig_1)=1/\delta$. Therefore for $\sig_1 < \sig_{\rm{opt}}$ we have $\kappa(\sig_1)= 1/\delta$, which contradicts the definition of $\sig_{\rm{opt}}$. This proves that $\sig\geq\sig_{\rm{opt}}$, as desired. 

In order to complete the proof, it remains to show that the equation $\kappa(\sigma)=1/\delta$ admits a solution for all $\delta>1$. For this purpose, we use the continuous mapping theorem and the fact that fisher information is a continuous function \cite{Costa1985ANE}. Recall that for two independent and non-constant random variables it holds that $\Ic(X+Y) < \Ic(X)$ \cite[Eq. 2.18]{barron1984monotonic}. Since $G$ and $SY$ are independent random variables we find that $\Ic(\sig G+SY) < \Ic(SY)$ which implies that $\Ic(\sig G+SY)$ takes finite values for all values of $\sig$. Therefore,
$$
\lim_{\sig\rightarrow 0}\kappa(\sig) = \lim_{\sig\rightarrow 0} \frac{\sig^2\left(\sig^2 \Ic(W_\sig)+\Ic(W_\sig)-1\right)}{1+\sig^2\left(\sig^2\Ic(W_\sig)-1\right)} = 0.
$$ 
Furthermore $\sig^2\Ic(\sig G+SY)= \Ic(G+\frac{1}{\sig}SY)\rightarrow\Ic(G)=1$ when $\sig\rightarrow\infty$. Hence,
$$
\lim_{\sig\rightarrow \infty}\kappa(\sig) = \lim_{\sig\rightarrow \infty} \frac{\sig^2\left(\sig^2 \Ic(W_\sig)+\Ic(W_\sig)-1\right)}{1+\sig^2\left(\sig^2\Ic(W_\sig)-1\right)} = 1.
$$
Note that $\sig^2\Ic(\sig G+ SY)<\sig^2\Ic(\sig G) = 1$, which further yields that $\kappa(\sig)<1$ for all $\sig \ge 0$.
Finally since $\Ic(\cdot)$ is a continuous function, we deduce that range of $\kappa:\mathbb{R^+}\cup 0 \rightarrow\mathbb{R}$ is $[0,1)$, implying the existence of a solution to \eqref{eq:lem} for all $\delta>1$.
\end{proof}

\vp
\noindent\textbf{A useful closed-form bound on the best achievable performance:}~In general, determining $\sig_{\rm opt}$ requires computing the Fisher information of the random variable $\sigma G+SY$ for $\sigma>0$. If the probability distribution of $SY$ is continuously differentiable (e.g., logistic model; see Section \ref{sec:psy}), then we obtain the following simplified bound.
\begin{cor}[Closed-form lower bound on $\sig_{\rm opt}$]\label{cor:Stam}
Let $p_{SY}:\mathbb{R}\rightarrow\mathbb{R}$ be the probability distribution of $SY$. If $p_{SY}(x)$ is differentiable for all $x\in\mathbb{R}$, then,
\begin{align}\label{eq:cor_stam}
\sig_{\rm{opt}}^2\ge\frac{1}{(\delta-1)(\Ic(SY)-1)}\,.
\end{align}
\end{cor}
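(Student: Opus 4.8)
The plan is to combine the defining relation $\kappa(\sigma_{\rm opt}) = 1/\delta$ from Theorem \ref{sec:lem} with \emph{Stam's inequality} for the Fisher information of a sum of independent random variables. Recall that for independent $X_1,X_2$ with finite Fisher information, $\mathcal{I}(X_1+X_2)^{-1}\ge \mathcal{I}(X_1)^{-1}+\mathcal{I}(X_2)^{-1}$. Applying this with $X_1=\sigma G$ (so $\mathcal{I}(\sigma G)=1/\sigma^2$) and $X_2=SY$, which are independent by \eqref{eq:GSY} and for which $\mathcal{I}(SY)$ is well defined precisely because $p_{SY}$ is differentiable, gives
$$\mathcal{I}(W_\sigma)\;\le\;\frac{\mathcal{I}(SY)}{1+\sigma^2\,\mathcal{I}(SY)},\qquad \sigma>0.$$
If $\mathcal{I}(SY)$ is infinite the claimed bound is vacuous, so one may assume it finite; moreover $SY=Sf(S)$ is non-Gaussian in all the models of interest, so $\mathcal{I}(SY)>1$ by Cram\'er--Rao and the right-hand side of \eqref{eq:cor_stam} is a genuine positive quantity.

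Next I would isolate $\mathcal{I}(W_{\sigma_{\rm opt}})$ from the defining equation. First note $\sigma_{\rm opt}>0$ (since $\kappa(0)=0\neq 1/\delta$) and that the denominator $D(\sigma):=1+\sigma^2(\sigma^2\mathcal{I}(W_\sigma)-1)$ of $\kappa$ is strictly positive at $\sigma=\sigma_{\rm opt}$: by Cram\'er--Rao, $\mathcal{I}(W_\sigma)\ge 1/\mathrm{Var}(W_\sigma)\ge 1/(1+\sigma^2)$, so the numerator $\sigma^2\big((\sigma^2+1)\mathcal{I}(W_\sigma)-1\big)$ of $\kappa$ is nonnegative, and since $\kappa(\sigma_{\rm opt})=1/\delta>0$ this forces $D(\sigma_{\rm opt})>0$. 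I can therefore clear denominators in $\kappa(\sigma_{\rm opt})=1/\delta$ and solve the resulting affine equation in $\mathcal{I}(W_{\sigma_{\rm opt}})$, obtaining the closed form
$$\mathcal{I}(W_{\sigma_{\rm opt}})\;=\;\frac{1+(\delta-1)\sigma_{\rm opt}^2}{\sigma_{\rm opt}^2\big((\delta-1)\sigma_{\rm opt}^2+\delta\big)}.$$

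Finally I would substitute this expression into the Stam bound. Both sides are ratios with strictly positive denominators (using $\delta>1$ and $\sigma_{\rm opt}>0$), so cross-multiplication is legitimate; after expanding, all quartic-in-$\sigma_{\rm opt}$ terms cancel and the inequality collapses to $(\delta-1)\,\sigma_{\rm opt}^2\,\big(\mathcal{I}(SY)-1\big)\ge 1$, which is exactly \eqref{eq:cor_stam}. (Equivalently, one can observe that $\kappa(\sigma)$ is increasing in $\mathcal{I}(W_\sigma)$ for fixed $\sigma$, so the Stam bound yields $\kappa(\sigma)\le \sigma^2(\mathcal{I}(SY)-1)/\big(1+\sigma^2(\mathcal{I}(SY)-1)\big)$, and evaluating at $\sigma_{\rm opt}$ gives the same conclusion.)

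I do not anticipate a real obstacle: the argument is Stam's inequality plus elementary algebra. The only points needing care are the positivity of the denominators so that the two cross-multiplications are valid — handled via Cram\'er--Rao and $\delta>1$ as above — and excluding the degenerate cases $\mathcal{I}(SY)\in\{1,\infty\}$, which are ruled out because, under the differentiability hypothesis, $SY=Sf(S)$ is non-Gaussian with finite Fisher information.
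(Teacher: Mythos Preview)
Your proposal is correct and follows essentially the same route as the paper: combine Stam's inequality $\Ic(W_\sigma)\le \Ic(\sigma G)\Ic(SY)/(\Ic(\sigma G)+\Ic(SY))$ with the defining relation $\kappa(\sigma_{\rm opt})=1/\delta$, and simplify. The only difference is in how the algebra is packaged. The paper rewrites $\kappa(\sigma)=h(\Ic(W_\sigma))$ for an auxiliary function $h$, shows $h$ is increasing on a region $\Rc_\sigma$, and then separately verifies via Cram\'er--Rao that both $\Ic(W_\sigma)$ and the Stam upper bound lie in $\Rc_\sigma$. Your approach of either solving $\kappa(\sigma_{\rm opt})=1/\delta$ explicitly for $\Ic(W_{\sigma_{\rm opt}})$, or (equivalently) observing directly that $\partial\kappa/\partial\Ic=\sigma^2/D^2>0$, sidesteps the region-membership check and makes the cancellation down to $(\delta-1)\sigma_{\rm opt}^2(\Ic(SY)-1)\ge 1$ more transparent. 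Substantively the two arguments are the same; your organization is a bit more economical.
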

The proof of the corollary reveals that \eqref{eq:cor_stam} holds with equality when $SY$ is Gaussian. In Section \ref{sec:psy}, we compute $p_{SY}$ for the Logistic and the Probit models and  numerically show that it is close to the density of a Gaussian random variable.  Consequently, the lower bound of Corollary \ref{cor:Stam} is almost exact when measurements are obtained according to the Logistic and Probit models; see Figure \ref{fig:p_SY} in the appendix. 

\begin{proof}[Proof of Corollary \ref{cor:Stam}]
Based on Theorem \ref{sec:lem}, the following equation holds for $\sig=\sig_{\rm opt}$
$$
\frac{1}{\delta}=\kappa(\sigma),
$$
or equivalently, by rewriting the right-hand side,
\begin{align}\label{eq:frac}
\frac{1}{\delta}=1- \frac{1}{\frac{1}{1-\sig^2\Ic(W_\sig)}-\sig^2}\,.
\end{align}
Define the following function
\begin{align*}
h(x):=1-\frac{1}{\frac{1}{1-\sig^2 x}-\sig^2}\,.
\end{align*}
The function $h$ is increasing in the region $\mathcal{R}_{\sig}   = \{ z: z> \sig^{-2}-\sig^{-4} \}.$
According to Stam's inequality \cite{stam}, for two independent random variables $X$ and $Y$ with continuously differentiable $p_X$ and $p_Y$  it holds that
$$
\Ic(X+Y)\le\frac{\Ic(X)\cdot\Ic(Y)}{\Ic(X) + \Ic(Y)},
$$
where equality is achieved if and only if $X$ and $Y$ are independent Gaussian random variables. Therefore since by assumption $p_{SY}$ is differentiable on the real line, Stam's inequality yields 
\begin{align}\label{eq:eq_stam}
{\Ic(W_\sig)}={\Ic(\sig\,G+SY)}\le\frac{\Ic(\sig\,G)\cdot\Ic(SY)}{\Ic(\sig\,G)+\Ic(SY)}\,.
\end{align}
Next we prove that for all $\sig > 0$, both sides of \eqref{eq:eq_stam} are in the region $\mathcal{R}_\sig$. First, we prove that $\Ic(W_\sig)\in\mathcal{R}_\sig$. By Cramer-Rao bound (e.g. see \cite[Eq. 2.15]{barron1984monotonic}) for Fisher information of a random variable $X$, we have that $\Ic(X) \ge 1/(\text{Var}\left[X\right])$. Also for the random variable $W_\sig$, we know that $\text{Var}\left[W_\sig\right] = 1+\sig^2-(\Exp[SY])^2$, thus
\begin{align}\label{eq:corr1}
\Ic(W_\sig) \ge \frac{1}{1+\sig^2-(\E[SY])^2} .
\end{align}
Using the relation $(\Exp[SY])^2 \le \Exp[S^2]\Exp[Y^2] = 1$, one can check that the following inequality holds :
\bea\label{eq:corr2}
 \frac{1}{1+\sig^2-(\E[SY])^2} \ge \sig^{-2}-\sig^{-4}.
\eea
Therefore from \eqref{eq:corr1} and \eqref{eq:corr2} we derive that $\Ic(W_\sig)\in\Rc_\sig$ for all $\sig>0$. Furthermore by the inequality in \eqref{eq:eq_stam} and the definition of $\Rc_\sig$ it directly follows that for all $\sig > 0$
$$
\frac{\Ic(\sig\,G)\,\Ic(SY)}{\Ic(\sig\,G)+\Ic(SY)} \in \mathcal{R}_\sig\,.
$$
Finally noting that $h(\cdot)$ is increasing in $\Rc_\sig$, combined with \eqref{eq:eq_stam} we have
$$
\frac{1}{\delta}=h\left(\Ic(W_\sig)\right) \le h\left(\frac{\Ic(\sig\,G)\cdot\Ic(SY)}{\Ic(\sig\,G)+\Ic(SY)}\right),
$$
 which after using the relation $\Ic(\sig\,G) = \sig^{-2}$ and further simplification yields the inequality in the statement of the corollary.
\end{proof}

\subsection{On the optimal loss function}\label{sec:opt_func}

It is natural to ask whether there exists a loss function that attains the bound of Theorem \ref{sec:lem}. If such a loss function exists, then we say it is \emph{optimal} in the sense that it maximizes the correlation performance among all convex loss functions in \eqref{eq:gen_opt}.

Our next theorem derives a candidate for the optimal loss function, which we denote $\ell_{\rm{opt}}$. Before stating the result, we provide some intuition about the proof which builds on Theorem \ref{sec:lem}. The critical observation in the proof of Theorem \ref{sec:lem} is that the effective noise $\sigma_{\widehat\ell}$ of $\widehat\ell$ is minimized (i.e., it attains the value $\sig_{\rm opt}$) if the Cauchy-Schwartz inequality in \eqref{eq:cs} holds with equality. Hence, we seek $\widehat\ell = \ell_{\rm{opt}}$ so that  for some $c\in\mathbb{R}$, 
\begin{align}\label{eq:MorDer}
\envdx{{\ell}_{\rm{opt}}} {w} {1} = c(\alpha_1w+\alpha_2\cdot\ksi_{W_{\rm{opt}}}(w)).
\end{align}
By choosing $c=-1$, integrating and ignoring constants irrelevant to the minimization of the loss function, the previous condition is equivalent to the following $
\env{\ell_{\rm{opt}} }{w}{1} = -\alpha_1w^2/2 - \alpha_2 \log(p_{W_{\rm{opt}} }(w)).
$
It turns out that this condition can be ``inverted" to yield the following explicit formula for $\ell_{\rm{opt}}$ (see Proposition \ref{propo:inverse}) 
$
\ell_{\rm{opt}} (w) = -\env{\alpha_1q + \alpha_2 \log(p_{W_{\rm{opt}} })}{w}{1}.\label{eq:optloss0}
$
Of course, one has to properly choose $\alpha_1$ and $\alpha_2$ to make sure that this function satisfies the system of equations in \eqref{eq:eq_main4} with $\sigma=\sigma_{\rm opt}$. The correct choice is specified in the theorem below.


\begin{thm}[Optimal loss function]\label{thm:opt_loss}
Recall the definition of $\sig_{\rm{opt}}$ in \eqref{eq:lem}. Define the random variable $W_{\rm{opt}} :=\sig_{\rm{opt}}  \,G+ SY$ and let $p_{W_{\rm{opt}} }$ denote its density. Consider the following loss function $\ell_{\rm{opt}}:\R\rightarrow\R$
\begin{align}\label{eq:opt_loss}
\ell_{\rm{opt}}(w) = -\env{\alpha_1q+\alpha_2\log(p_{W_{\rm{opt}} })}{w}{1},
\end{align}
where $q(x) = x^2/2$ and 
\begin{align}
\begin{split}
\alpha_1&=  \frac{1-\sig_{\rm{opt}}^2\Ic(W_{\rm{opt}})}{\delta(\sig_{\rm{opt}}^2\Ic(W_{\rm{opt}})+\Ic(W_{\rm{opt}})-1)},\\ \alpha_2 &= \frac{1}{\delta(\sig_{\rm{opt}}^2\Ic(W_{\rm{opt}})+\Ic(W_{\rm{opt}})-1)}.
\end{split}
\label{eq:alphas}
\end{align}
If $\ell_{\rm opt}$ defined as in \eqref{eq:opt_loss} is convex and the equation $\kappa(\sig) = 1/\delta$ has a unique solution, then $\sig_{\ell_{\rm{opt}}}=\sig_{\rm{opt}} $. 
\end{thm}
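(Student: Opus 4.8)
The plan is to verify directly that the loss function $\ell_{\rm opt}$ in \eqref{eq:opt_loss}, under the stated convexity hypothesis, produces effective noise exactly $\sig_{\rm opt}$; by Theorem \ref{sec:lem} we already know $\sig_{\ell}\ge\sig_{\rm opt}$ for every valid loss, so it suffices to show that $\ell_{\rm opt}$ achieves equality, i.e. that $(\mu,\al,\la)$ corresponding to $\ell_{\rm opt}$ satisfies the system \eqref{eq:eq_main} with ratio $\al/\mu=\sig_{\rm opt}$. The first step is to establish the inversion identity alluded to in the text (Proposition \ref{propo:inverse}): if $\widehat\ell(w)=-\env{h}{w}{1}$ for a suitable concave-plus-quadratic perturbation $h$, then $\env{\widehat\ell}{w}{1} = -h(w)$, so that $\envdx{\widehat\ell}{w}{1} = -h'(w)$. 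Applying this with $h = \alpha_1 q + \alpha_2\log p_{W_{\rm opt}}$ gives the key formula
$$
\envdx{\ell_{\rm opt}}{w}{1} = -\alpha_1 w - \alpha_2\,\ksi_{W_{\rm opt}}(w),
$$
which is exactly the equality condition \eqref{eq:MorDer} (with $c=-1$) that makes Cauchy–Schwarz in \eqref{eq:cs} tight. One has to be careful here that the Moreau-envelope inversion is valid, which requires $h$ to be such that $-h$ is "prox-bounded" and, ultimately, that $\ell_{\rm opt}$ is convex — this is precisely the hypothesis imposed in the statement, so the identity is available to us.

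The second step is to plug this expression for $\envdx{\ell_{\rm opt}}{W_{\sig}}{1}$ into the reduced system \eqref{eq:eq_main4} with $\sig = \sig_{\rm opt}$ and check the three equations hold with the stated $\alpha_1,\alpha_2$. Using $\E[W_\sig^2]=1+\sig^2$, $\E[W_\sig\,\ksi_{W_\sig}(W_\sig)]=-1$, and $\E[(\ksi_{W_\sig}(W_\sig))^2]=\Ic(W_\sig)$, equations \eqref{eq:mu_main4}, \eqref{eq:alpha_main4}, \eqref{eq:lambda_main4} become three linear/quadratic relations in $\alpha_1,\alpha_2$ (with coefficients depending on $\sig_{\rm opt}$, $\delta$, and $\Ic(W_{\rm opt})$). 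Two of these relations determine $\alpha_1$ and $\alpha_2$ uniquely — yielding exactly formula \eqref{eq:alphas} — and the third is equivalent to $\kappa(\sig_{\rm opt}) = 1/\delta$, which holds by the definition \eqref{eq:lem} of $\sig_{\rm opt}$ (and existence of a solution was established at the end of the proof of Theorem \ref{sec:lem}). This shows that $(\mu,\al,\la)$ with $\al/\mu = \sig_{\rm opt}$ is \emph{a} solution of \eqref{eq:eq_main} for the loss $\ell_{\rm opt}$; here I must also recover $\mu$ and $\la$ themselves from the normalization $\widehat\ell(t)=\tfrac{\la}{\mu^2}\ell(\mu t)$ used in the proof of Theorem \ref{sec:lem}, but any consistent choice works since correlation only depends on the ratio.

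The remaining step is uniqueness: Theorem \ref{sec:lem}'s bound and the achievability argument only pin down $\sig_{\ell_{\rm opt}}$ once we know the solution of \eqref{eq:eq_main} for $\ell_{\rm opt}$ is unique (otherwise a second solution might have a different, larger ratio and the identification of the limiting correlation in Theorem \ref{thm:main} could be ambiguous). If $\ell_{\rm opt}$ happens to meet the hypotheses of Proposition \ref{propo:Unique} — strictly convex, $C^1$, with $\ell_{\rm opt}'(0)\ne 0$ — uniqueness is immediate; otherwise one invokes the extra hypothesis in the theorem that $\kappa(\sig)=1/\delta$ has a unique solution, together with the fact that every solution of \eqref{eq:eq_main} for any loss must satisfy \eqref{eq:ineq_sig}, i.e. $\kappa(\sig_{\ell_{\rm opt}})\ge 1/\delta$, while the just-constructed solution attains equality. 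The main obstacle I anticipate is the Moreau-envelope inversion in Step 1: making precise the sense in which $\ell_{\rm opt}=-\env{h}{\cdot}{1}$ "inverts" to $\env{\ell_{\rm opt}}{\cdot}{1}=-h$ requires the Moreau-envelope analogue of the fact that the conjugate of the conjugate is the closed convex hull, and it genuinely uses convexity of $\ell_{\rm opt}$ — which is exactly why that convexity is a hypothesis rather than a conclusion. Everything after that is bookkeeping with Gaussian integration-by-parts identities.
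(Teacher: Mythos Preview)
Your proposal is correct and follows essentially the same route as the paper. The paper's formal proof establishes the key identity $\env{\ell_{\rm opt}}{w}{1}=-\alpha_1 q(w)-\alpha_2\log p_{W_{\rm opt}}(w)$ via the Fenchel--Legendre representation of Proposition~\ref{propo:FL} together with the biconjugate theorem (using convexity of $q+\alpha_1 q+\alpha_2\log p_{W_{\rm opt}}$), whereas you invoke the Moreau-envelope inversion of Proposition~\ref{propo:inverse} directly; these are equivalent mechanisms for the same identity, and thereafter both arguments substitute \eqref{eq:MorDer2} into the reduced system \eqref{eq:eq_main4} and verify the three equations using $\E[W_\sig^2]=1+\sig^2$, $\E[W_\sig\ksi_{W_\sig}(W_\sig)]=-1$, $\E[\ksi_{W_\sig}^2]=\Ic(W_\sig)$ and $\kappa(\sig_{\rm opt})=1/\delta$. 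Your discussion of the uniqueness issue is somewhat more explicit than the paper's proof, which simply exhibits the triplet $(\mu,\alpha,\la)=(1,\sig_{\rm opt},1)$ as a solution and leaves the role of the hypothesis ``$\kappa(\sig)=1/\delta$ has a unique solution'' implicit.
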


In general, there is no guarantee that the function $\ell_{\rm{opt}}(\cdot)$ as defined in \eqref{eq:opt_loss} is convex. However, if this is the case, the theorem above guarantees that it is optimal \footnote{Strictly speaking, the performance is optimal among all convex loss functions $\ell$ for which \eqref{eq:eq_main} has a unique solution as required by Theorem \ref{sec:lem}.}. A \emph{sufficient} condition for $\ell_{\rm{opt}}(w)$ to be convex, is provided in Section \ref{sec:opt_FL}.  Importantly, in Section \ref{sec:opt_special} we show that this condition holds for observations following the Signed model.  Thus, for this case the resulting function is convex. Although we do \emph{not} prove the convexity of optimal loss function for the Logistic and Probit models, our numerical results (e.g., see Figure \ref{fig:opt_loss_logistic}) suggest that this is the case. Concretely, we conjecture that the loss function $\ell_{\rm opt}$ is convex for Logistic and Probit models, and therefore by Theorem \ref{thm:opt_loss} its performance is optimal.

\begin{proof}[Proof of Theorem \ref{thm:opt_loss}]
We will show that the triplet $(\mu=1,\alpha=\sig_{\rm opt},\la=1)$ is a solution to the equations \eqref{eq:eq_main} for $\ell$ chosen as in \eqref{eq:opt_loss}. 
Using Proposition \ref{propo:FL} in the appendix we rewrite $\ell_{\rm opt}$ using the Fenchel-Legendre conjugate as follows :
\bea\label{eq:FLopt}
 \ell_{\rm opt} (w)= \left(q + \alpha_1 q + \alpha_2 \log p_{W_{\rm opt}}\right)^\star(w) - q(w),   
\eea
where $q(w) = w^2/2$, and for a function $f$, its Fenchel-Legendre conjugate is defined as:
$$
f^\star(x) = \max_y\, xy-f(y). 
$$
Next we use the fact that for any proper, closed and convex function $f$ it holds that, $(f^\star)^\star = f$ \cite[theorem 12.2]{rockafellar1970convex}. Therefore noting that $q+\alpha_1 q+ \alpha_2 \log p_{W_{\rm opt}}$ is a convex function (see the proof of Lemma \ref{lem:convex} in the appendix), combined with \eqref{eq:FLopt} yields that
\bea\label{eq:FLsecond}
(\ell_{\rm opt} + q)^\star = q+\alpha_1 q+ \alpha_2 \log p_{W_{\rm opt}}.
\eea
Additionally using Proposition \ref{propo:FL} we find that $\env{\ell_{\rm opt}}{w}{1} = q(w) - (q+\ell_{\rm opt})^\star(w),$ which by \eqref{eq:FLsecond} reduces to :
$$
\env{\ell_{\rm opt}}{w}{1} = -\alpha_1q(w) -\alpha_2\log p_{W_{\rm opt}}(w).
$$
Thus, by differentiation, we find that $\ell_{\rm opt}$ satisfies \eqref{eq:MorDer} with $c=-1$, i.e.,
\bea \label{eq:MorDer2} 
\envdx{{\ell}_{\rm{opt}}} {w} {1} = -\alpha_1w-\alpha_2\cdot\ksi_{W_{\rm{opt}}}(w).
\eea
Next, we establish the desired by directly substituting \eqref{eq:MorDer2} into the system of equations in \eqref{eq:eq_main4}. 
First, using the values of $\alpha_1$ and $\alpha_2$ in \eqref{eq:alphas}, as well as, the fact that  $\kappa(\sig_{\rm{opt}})=1/\delta$, we have the following chain of equations:
\begin{align}
\nn\Exp\bigg[\,\left(\envdx{\ell_{\rm{opt}}}{W_{\rm{opt}} }{1}\right)^2\,\bigg] &= \Exp\bigg[(\alpha_1W_{\rm{opt}} + \alpha_2\,\ksi_{W_{\rm{opt}}}(W_{\rm{opt}}))^2\bigg] \\
\nn&= \alpha_1^2\,(\sig_{\rm{opt}}^2+1) + \alpha_2^2\,\Ic(W_{\rm{opt}}) + 2\, \alpha_1\alpha_2\,\Exp\left[W_{\rm{opt}}\cdot\ksi_{W_{\rm{opt}}}(W_{\rm{opt}})\right] \\
\nn&= \frac{1+\sig_{\rm{opt}}^2\,\left(\sig_{\rm{opt}}^2\,\Ic(W_{\rm{opt}})-1\right)}{\delta^2\,\left(\sig_{\rm{opt}}^2\,\Ic(W_{\rm{opt}})+\Ic(W_{\rm{opt}})-1\right)} 
=\frac{\sig_{\rm{opt}}^2}{\delta^2 \,\kappa(\sig_{\rm{opt}})}\\
&={\sig_{\rm{opt}}^2}/{\delta}.\label{eq:3.3-2}
\end{align}
This shows \eqref{eq:alpha_main}. Second, using again the specified values of $\alpha_1$ and $\alpha_2$, a similar calculation yields
\begin{align}
\nn\Exp\bigg[\envdx{\ell_{\rm{opt}}}{W_{\rm{opt}}}{1}\ksi_{W_{\rm{opt}}}(W_{\rm{opt}})\bigg] &= -\Exp\left[\left(\alpha_1W_{\rm{opt}}+\alpha_2\,\ksi_{W_{\rm{opt}}}(W_{\rm{opt}})\right)\,\ksi_{W_{\rm{opt}}}(W_{\rm{opt}})\right] = \alpha_1 - \alpha_2\,\Ic(W_{\rm{opt}}) \\
&= -{1}/{\delta}.\label{eq:3.3-2}
\end{align}
Recall from \eqref{eq:SteinIBP1} that $\E\bigg[ G\cdot \envdx{\ell_{\rm{opt}}}{W_{{\rm{opt}}} }{1}  \bigg] = -\sigma_{\rm{opt}} \,\E\left[\envdx{\ell_{\rm{opt}}}{W_{{\rm{opt}}} }{1}\ksi_{W_{{\rm{opt}}}}(W_{{\rm{opt}}})\right].$ This combined with \eqref{eq:3.3-2} yields \eqref{eq:lambda_main}.
Finally, we use again \eqref{eq:MorDer2} and the specified values of $\alpha_1$ and $\alpha_2$ to find that
 \begin{align}
\nn \Exp\bigg[W_{\rm{opt}} \cdot\envdx{\ell_{\rm{opt}}}{W_{\rm{opt}}}{1} \bigg] &= \Exp\bigg[W_{\rm{opt}} \cdot (-\alpha_1 W_{\rm{opt}}- \alpha_2\,\ksi_{W_{\rm{opt}}}(W_{\rm{opt}})) \bigg] \\&=  -\alpha_1\,\Exp\left[W_{\rm{opt}}^2\right]-\alpha_2\,\Exp\left[W_{\rm{opt}}\,\ksi_{W_{\rm{opt}}}(W_{\rm{opt}})\right]  \\
\nn &= -\alpha_1(\sig_{\rm{opt}}^2+1) - \alpha_2 \int_{-\infty}^{\infty}{w\,p^{\prime}_{W_{\rm{opt}}}(w)\,\text{d}w} = -\alpha_1(\sig_{\rm{opt}}^2+1)+\alpha_2 \\
 &= {\sig_{\rm{opt}}^2}/{\delta}. \label{eq:3.3-1}
\end{align}
But, using \eqref{eq:SteinIBP1} it holds that
\bea
\Exp\bigg[W_{\rm{opt}} \cdot\envdx{\ell_{\rm{opt}}}{W_{\rm{opt}}}{1} \bigg] = -\sigma_{\rm{opt}}^2\, \Exp\bigg[\envdx{\ell_{\rm{opt}}}{W_{\rm{opt}}}{1}\ksi_{W_{\rm{opt}}}(W_{\rm{opt}})\bigg] + \Exp\bigg[Y\, S \cdot\envdx{\ell_{\rm{opt}}}{W_{\rm{opt}}}{\la}  \bigg].\nn
\eea
This combined with \eqref{eq:3.3-1} and \eqref{eq:3.3-2} shows that $\Exp\bigg[Y\, S \cdot\envdx{\ell_{\rm{opt}}}{W_{\rm{opt}}}{\la}  \bigg]=0$, as desired to satisfy \eqref{eq:mu_main}. This completes the proof of the theorem. 
%
%
%
\end{proof} 


\section{Special cases}\label{sec:cases}
\subsection{Least-Squares}\label{sec:LS}

By choosing $\ell(t)=(t-1)^2$ in \eqref{eq:gen_opt}, we obtain the standard least-squares estimate. To see this, note that since $y_i=\pm 1$, it holds for all $i$ that
$
(y_i\ab_i^T\x-1)^2 = (y_i-\ab_i^T\x)^2.
$
Thus, $\xh$ is minimizing the sum of squares of the residuals:
\bea\label{eq:LS}
\xh=\arg\min_\x \sum(y_i-\ab_i^T\x)^2.
\eea
For this choice of a loss function, we can solve the equations in \eqref{eq:eq_main} in closed form. Furthermore, the equations have a (unique, bounded) solution for any $\delta>1$ provided that $\E[SY]>0$. The final result is summarized in the corollary below. See Section \ref{sec:corLS} for the proof.

\begin{cor}[Least-squares]\label{cor:LS}
 Let Assumption \ref{ass:Gaussian} hold and $\delta>1$. For the label function assume that  $\E[SY]>0$ in the notation of \eqref{eq:GSY}. Let $\xh$ be as in \eqref{eq:LS}. Then, in the limit of $m,n\rightarrow+\infty$, $m/n\rightarrow\delta$, Equations \eqref{eq:corr_thm} and \eqref{eq:norm_thm} hold with probability one with $\alpha$ and $\mu$ given as follows:
%
\bea
\mu &= \E[SY] \label{eq:mu_LS}, \\
\alpha &=\sqrt{1-\left(\E[SY]\right)^2}\cdot\sqrt{\frac{1}{\delta-1}}\;.\label{eq:alpha_LS}
\eea
\end{cor}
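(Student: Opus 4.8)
The plan is to specialize the system \eqref{eq:eq_main} to the quadratic loss and solve it explicitly. The proximal operator of $\ell(t)=(t-1)^2$ is obtained by minimizing $\tfrac{1}{2\la}(x-v)^2+(v-1)^2$ over $v$, giving $\prox{\ell}{x}{\la}=\tfrac{x+2\la}{1+2\la}$; hence, by the identity $\envdx{\ell}{x}{\la}=\big(x-\prox{\ell}{x}{\la}\big)/\la$, we obtain the closed form $\envdx{\ell}{x}{\la}=\tfrac{2(x-1)}{1+2\la}$. Substituting $x=\al G+\mu SY$ turns each of the three expectations in \eqref{eq:eq_main} into a low-degree polynomial in $(\al,\mu,\la)$ that can be evaluated using only $G\perp(S,Y)$, $\E[G]=0$, $\E[G^2]=1$, $\E[S^2]=1$, and $Y^2=1$.

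I would then read off the solution one equation at a time. Equation \eqref{eq:mu_main}, after dividing by the nonzero factor $2/(1+2\la)$, reduces to $\mu-\E[SY]=0$, i.e. $\mu=\E[SY]$. Equation \eqref{eq:lambda_main} reduces to $\tfrac{2\la\delta}{1+2\la}\,\al=\al$; since $\la>0$ forces $\al\neq 0$ (otherwise \eqref{eq:alpha_main} would read $\la^2\delta\,\E\big[(\envdx{\ell}{\mu SY}{\la})^2\big]=0$, impossible because $\E[(\mu SY-1)^2]=1-\mu^2>0$), we get $\la=\tfrac{1}{2(\delta-1)}$, and consequently $\tfrac{2\la}{1+2\la}=\tfrac1\delta$. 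Finally, using $\mu=\E[SY]$ one checks $\E\big[(\al G+\mu SY-1)^2\big]=\al^2+1-\mu^2$, so \eqref{eq:alpha_main} collapses to $\tfrac1\delta(\al^2+1-\mu^2)=\al^2$, i.e. $\al^2=\tfrac{1-\mu^2}{\delta-1}=\tfrac{1-(\E[SY])^2}{\delta-1}$, which is \eqref{eq:alpha_LS}. Since $|\E[SY]|\le(\E[S^2]\E[Y^2])^{1/2}=1$ with the inequality strict (as $Y$ is $\pm1$-valued while $S$ has a density, the two cannot be linearly dependent), $\al$ is a well-defined positive number, the triple above is the unique solution with $\la>0$ (the regime relevant to the CGMT reduction \eqref{eq:DO}), and the assumption $\E[SY]>0$ ensures $\mu\neq 0$; thus the hypotheses of Theorem \ref{thm:main} concerning the system of equations are met.

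It remains to check the other hypothesis of Theorem \ref{thm:main}, namely boundedness of the minimizer set of \eqref{eq:LS}. Writing $\widetilde{\A}=\mathrm{diag}(y_1,\dots,y_m)\,\A$, the objective of \eqref{eq:LS} equals $\tfrac1m\|\widetilde{\A}\x-\one\|_2^2$; since the diagonal matrix is invertible, $\widetilde{\A}$ has the same rank as $\A$, namely $n$ almost surely when $m>n$, so the minimizer is unique and hence bounded. Applying Theorem \ref{thm:main} with the values of $\mu$ and $\al$ found above in \eqref{eq:corr_thm} and \eqref{eq:norm_thm} then yields the corollary. I expect the algebra to be entirely routine; the only mild subtleties are excluding the degenerate branch $\al=\la=0$ (handled by restricting to $\la>0$, as dictated by \eqref{eq:DO}) and justifying the full-rank claim for $\widetilde{\A}$ despite $y_i$ depending on $\ab_i$ — harmless, since conditioning on the data merely fixes an invertible diagonal rescaling of $\A$.
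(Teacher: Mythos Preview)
Your proposal is correct and follows essentially the same route as the paper: compute the proximal operator of the quadratic loss explicitly, then solve \eqref{eq:eq_main} one equation at a time. The only cosmetic difference is that the paper obtains $\la$ via the second-derivative reformulation \eqref{eq:lambda_main3} whereas you work directly with \eqref{eq:lambda_main}; you are also more explicit than the paper in verifying the hypotheses of Theorem~\ref{thm:main} (boundedness of the LS minimizer via the full-rank argument, exclusion of the degenerate branch $\al=0$, and $\mu\neq 0$).
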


Corollary \ref{cor:LS} appears in \cite{NIPS} (see also \cite{Bri,PV15,genzel2017high} and Section \ref{sec:LS_app} for an interpretation of the result). However, these previous works obtain results that are limited to least-squares loss. In contrast, our results are general and LS prediction is obtained as a simple corollary of our general Theorem \ref{thm:main}.  
Moreover, our study of fundamental limits allows us to quantify the sub-optimality gap of least-square (LS) as follows. 

\vp
\noindent\textbf{On the optimality of LS.}~On the one hand, Corollary \ref{cor:LS} derives an explicit formula for the effective noise variance $\sigma_{\rm{LS}}=\alpha/\mu$ of LS in terms of $E[YS]$ and $\delta$.  On the other hand, Corollary \ref{cor:Stam} provides an explicit lower bound on the optimal value $\sig_{\rm opt}$ in terms of $\Ic(SY)$ and $\delta$. Combining the two, we conclude that 
$$
\frac{\sigma_{\rm{LS}}^2}{\sig_{\rm opt}^2} \leq \xi:=(\Ic(SY)-1)\frac{1-(\E[SY])^2}{(\E[SY])^2}\,.
$$
In terms of correlation,
\begin{align*}
\frac{\rm{corr}_{opt}}{\rm{corr}_{\rm{LS}}} = \sqrt{\frac{1+\sig_{\rm{LS}}^2}{1+\sig_{\rm opt}^2}} \le \frac{\sig_{\rm{LS}}}{\sig_{\rm opt}}\le \sqrt{\ksi}\,,
\end{align*}
where the first inequality follows from the fact that $\sig_{\rm{LS}} \ge\sig_{\rm opt}.$ 
\begin{center}
\emph{Therefore, 
the performance of LS is at least as good as $\frac{1}{\sqrt{\ksi}}$ times the optimal one. \\ In particular, for Logistic and Probit models (for which Corollary \ref{cor:Stam} holds), \\ we can explicitly compute $\frac{1}{\sqrt{\ksi}} = 0.9972 \text{ and } 0.9804$,  respectively.} 
\end{center}

Another interesting consequence of combining Corollary \ref{cor:LS} with Corollary \ref{cor:Stam} is that LS would be optimal if $SY$ were a a Gaussian random variable. To see this, recall from Corollary \ref{cor:Stam} that if  $SY$ is Gaussian then:
$$
\sig_{\rm opt}^2=\frac{1}{(\delta-1)(\Ic(SY)-1)}.
$$ 
But, for $SY$ Gaussian, we can explicitly compute $\Ic(SY) = 1/\text{Var}[SY]$, which leads to $$\sig_{\rm opt}^2 = \frac{1-(\Exp[SY])^2}{(\Exp[SY])^2(\delta-1)}.$$ The right hand side is exactly $\sig_{\text{LS}}^2$. Therefore the optimal performance is achieved by the square-loss function if $SY$ is a Gaussian random variable.  In particular, the result described above applies to the following binary Gaussian-mixtures model:
$$
y_i = \pm1 ~~\Leftrightarrow~~ \ab_i\sim\Nn(y_i \x_0, \Id_n),\quad i\in[m].
$$
For this model, using the method introduced above (with appropriate modifications), we prove in \cite{Hossein_ISIT2020} that LS is optimal for $\delta>1$ among all choices of convex loss functions yielding a unique solution to the equations.


\subsection{Logistic \& Hinge loss}\label{sec:logloss}

 Theorem \ref{thm:main} only holds in regimes for which the set of minimizers of \eqref{eq:gen_opt} is bounded. As we show here, this is $\emph{not}$ always the case. Specifically, consider non-negative loss functions $\ell(t)\geq 0$ with the property $\lim_{t\rightarrow+\infty} \ell(t)=0$. For example, the hinge, exponential and logistic loss functions all satisfy this property. Now, we show that for such loss functions the set of minimizers is unbounded if $\delta<\delta^\star_f$ for some appropriate $\delta^\star_f>2$. First, note that the set of minimizers is unbounded if the following condition holds:
\bea\label{eq:sep}
\exists~\x_s\neq \mathbf{0} \quad\text{such that}\quad y_i\ab_i^T\x_s \geq 0, \quad \forall~i\in[m].
\eea
Indeed, if \eqref{eq:sep} holds then $\x=c\cdot\x_s$ with $c\rightarrow+\infty$, attains zero cost in \eqref{eq:gen_opt}; thus, it is optimal and the set of minimizers is unbounded. To proceed, we rely on a recent result by Candes and Sur \cite{candes2018phase} who prove that \eqref{eq:sep} holds iff \footnote{To be precise, \cite{candes2018phase} proves the statement for measurements $y_i,~i\in[m]$ that follow a logistic model. Close inspection of their proof shows that this requirement can be relaxed by appropriately defining the random variable $Y$ in \eqref{eq:GSY}. See also \cite{montanari2019generalization} and \cite{deng2019model}.} 
\bea\label{eq:threshold}
\delta \leq \delta^\star_{f}:= \left(\min_{c\in\R}\E\left[\left(G+c\,S\,Y\right)_{-}^2\right]\right)^{-1},
\eea
where $G,S$ and $Y$ are random variables as in \eqref{eq:GSY} and $(t)_{-}:=\min\{0,t\}$. We highlight that Logistic and Hinge losses give unbounded solutions in the Noisy-Signed model with $\eps=0$, since the condition \eqref{eq:sep} holds for $\x_s=\x_0$. However their performances are comparable to the optimal performance in both Logistic and Probit models (see Figures \ref{fig:figure2} and \ref{fig:figure3}).

%
\section{Numerical experiments} \label{sec:numerical}
In this section, we present numerical simulations that validate the predictions of Theorems \ref{thm:main}, \ref{sec:lem} and \ref{thm:opt_loss}.  We use the following three popular models as our case study: Signed, Logistic and Probit. We generate random measurements according to \eqref{eq:gen_model} and Assumption \ref{ass:Gaussian}. Without loss of generality (due to rotational invariance of the Gaussian measure) we set $\x_0=[1,0,...,0]^T$. We then obtain estimates $\xh_\ell$ of $\x_0$ by numerically solving \eqref{eq:gen_opt} and measure performance by the correlation value $\corr{\xh_\ell}{\x_0}$. Throughout the experiments, we set $n=128$ and the recorded values of correlation are averages over $25$ independent realizations. For each label function we first provide plots that compare results of Monte Carlo simulations to the asymptotic predictions for loss functions discussed in Section \ref{sec:cases}, as well as, to the optimal performance of Theorem \ref{sec:lem}. We next present numerical results on optimal loss functions. In order to empirically derive the correlation of optimal loss function, we run gradient descent-based optimization with 1000 iterations. As a general comment, we note that despite being asymptotic, our predictions appear accurate even for relatively small problem dimensions. 
For the analytical predictions we apply Theorem \ref{thm:main}. In particular for solving the system of non-linear equations in \eqref{eq:gen_opt}, we empirically observe (see also \cite{Master,salehi2019impact} for similar observation) that if a solution exists, then it can be efficiently found by the following fixed-point iteration method. Let $\vb := [\mu,\alpha,\la]^T$ and $\Fc:\R^3\rightarrow\R^3$ be such that \eqref{eq:gen_opt} is equivalent to $\vb = \Fc(\vb)$. With this notation, we initialize $\vb=\vb_0$ and for $k\geq 1$ repeat the iterations $\vb_{k+1} = \Fc(\vb_k)$ until convergence. \\  

\noindent\textbf{Logistic model.}~ For the logistic model, comparison between the predicted values  and the numerical results is illustrated in Figure \ref{fig:figure2}. Results are shown for LS, logistic and hinge loss functions. Note that minimizing the logistic loss corresponds to the maximum-likelihood estimator (MLE) for logistic model. An interesting observation in Figure \ref{fig:figure2} is that in the high-dimensional setting (finite $\delta$) LS has comparable (if not slightly better) performance to MLE. Additionally we observe that in this model, performance of LS is almost the same as the best possible performance derived according to Theorem \ref{sec:lem}. This confirms the analytical conclusion of Section \ref{sec:LS}. The comparison between the optimal loss function as in Theorem \ref{thm:opt_loss} and other loss functions is illustrated in Figure \ref{fig:opt_loss_logistic}. We note the obvious similarity between the shapes of optimal loss functions and LS which further explains the similarity between their performance.
\\

\noindent\textbf{Probit model.}~ Theoretical predictions for the performance of hinge and LS loss functions are compared with the empirical results and optimal performance of Theorem \ref{sec:lem} in Figure \ref{fig:figure3}. Similar to the Logistic model, in this model LS also outperforms hinge-loss and its performance resembles the performance of optimal loss function derived according to Theorem \ref{thm:opt_loss}. Figure \ref{fig:opt_probit} illustrates the shapes of LS, hinge-loss and the optimal loss functions for the Probit model. The obvious similarity between the shape of LS and optimal loss functions for all values of $\delta$ explains the close similarity of their performance.
\vp

Additionally by comparing the LS performance for the three models in Figures \ref{fig:figure1}, \ref{fig:figure2} and \ref{fig:figure3}, it is clear that higher (resp., lower) correlation values are achieved for signed (resp., logistic) measurements. This behavior is indeed predicted by Corollary \ref{cor:LS}: correlation performance is higher for higher values of $\mu=\E[SY]$. It can be shown that for  the signed, probit and logistic models we have $\mu= \sqrt{2/\pi}, \sqrt{1/\pi} \text{ and } 0.4132$, respectively.
 \begin{figure*}[t!]
\centering
	\begin{subfigure}{0.48\textwidth}
		\centering
    		\includegraphics[width=6.7cm, height=5.5cm]{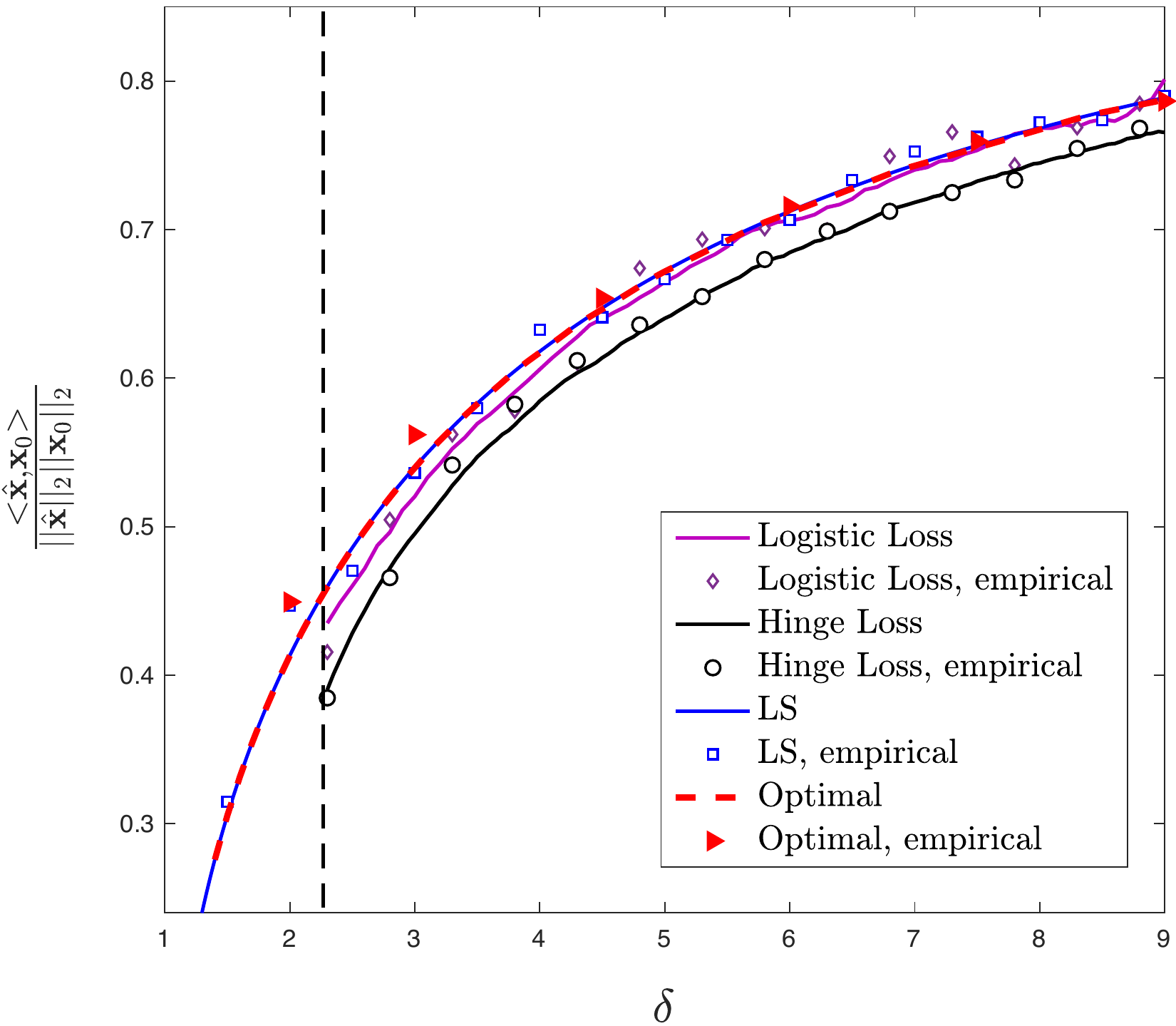}
    		\caption{{\footnotesize}}
    		\label{fig:figure2}
    \end{subfigure}
    	\begin{subfigure}{0.48\textwidth}
		\centering
    		\includegraphics[width=6.7cm, height=5.5cm]{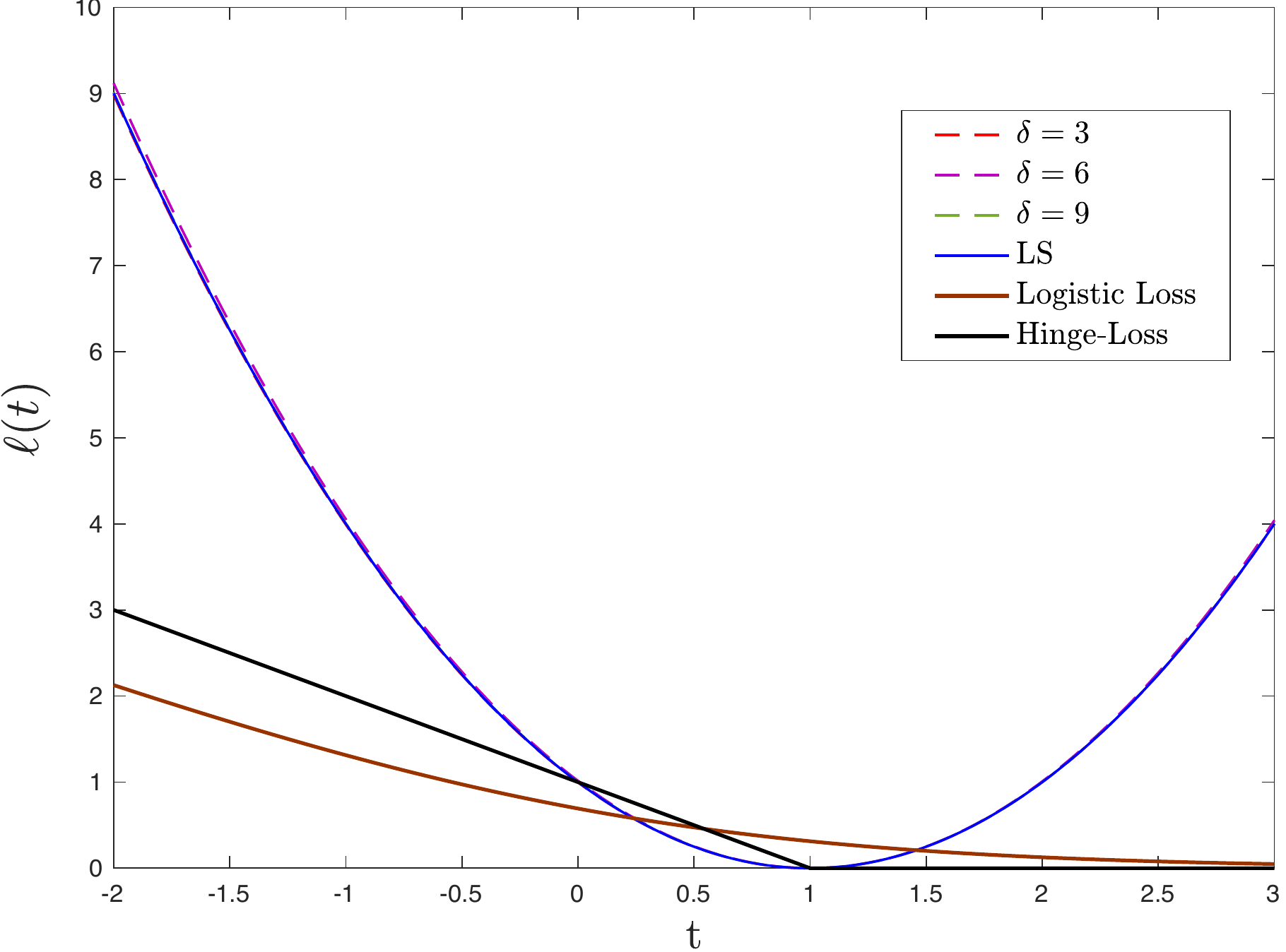}
    		\caption{{\footnotesize}}
    		\label{fig:opt_loss_logistic}
    \end{subfigure}
    \\
    \caption{\footnotesize Left: Comparison between analytical and empirical results for the performance of LS, Logistic loss, Hinge-loss and optimal loss function for Logistic model. The vertical dashed line represents $\delta^\star_f \approx  2.275$, as evaluated by \eqref{eq:threshold}. Right: Illustrations of optimal loss functions for different values of $\delta$, derived according to Theorem \ref{thm:opt_loss} for Logistic model. In order to signify the similarity of optimal loss function to the LS loss, the optimal loss functions (hardly visible) are scaled such that $\ell(1)=0$ and $\ell(2)=1$ .}
    \end{figure*}
    
\begin{figure*}[t!]
\centering
	\begin{subfigure}{0.48\textwidth} 
		\centering
    		\includegraphics[width=6.9cm, height=5.6cm]{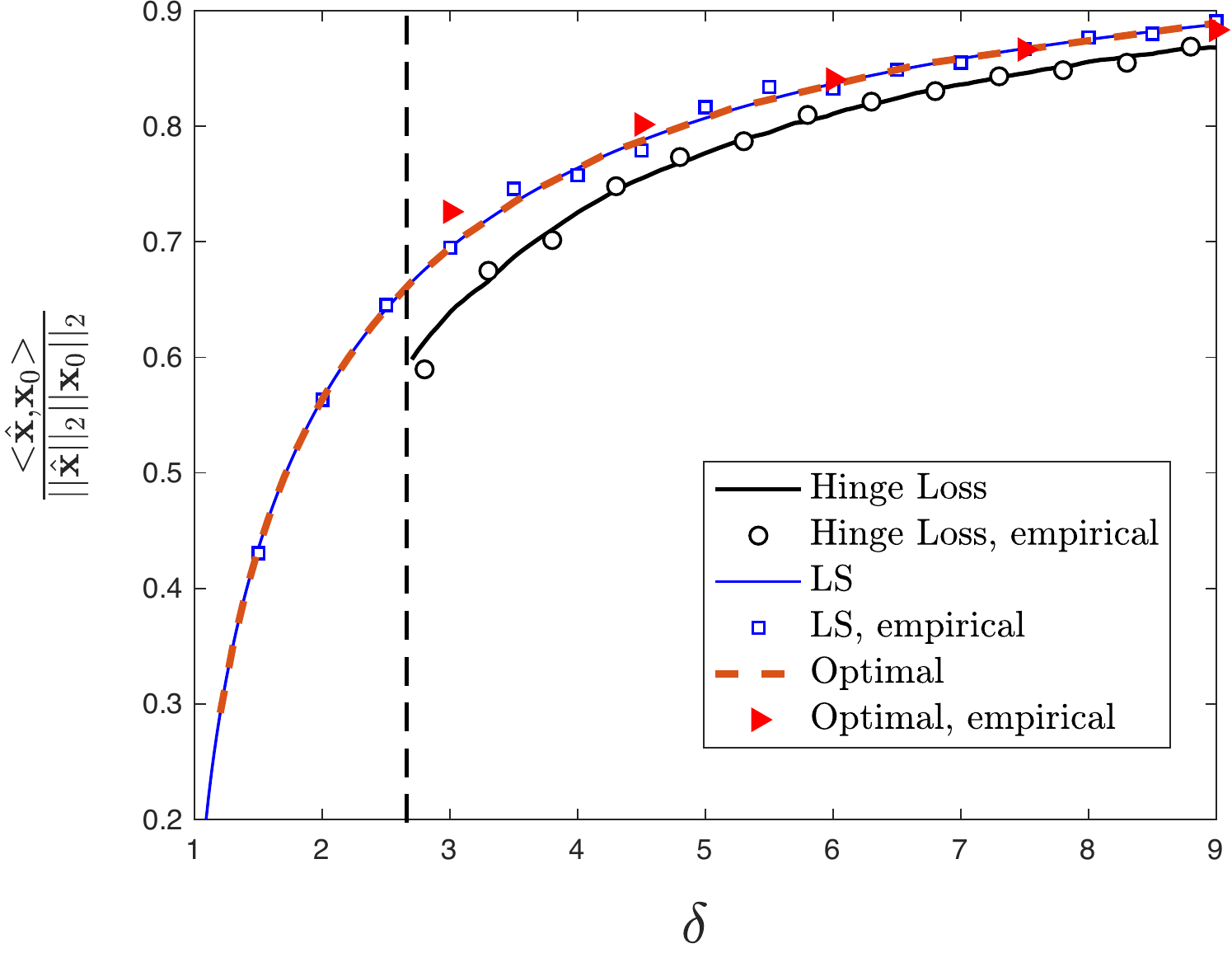}
		\caption{}
		\label{fig:figure3}
    \end{subfigure}
    	\begin{subfigure}{0.48\textwidth}
		\centering
    		\includegraphics[width=6.7cm, height=5.5cm]{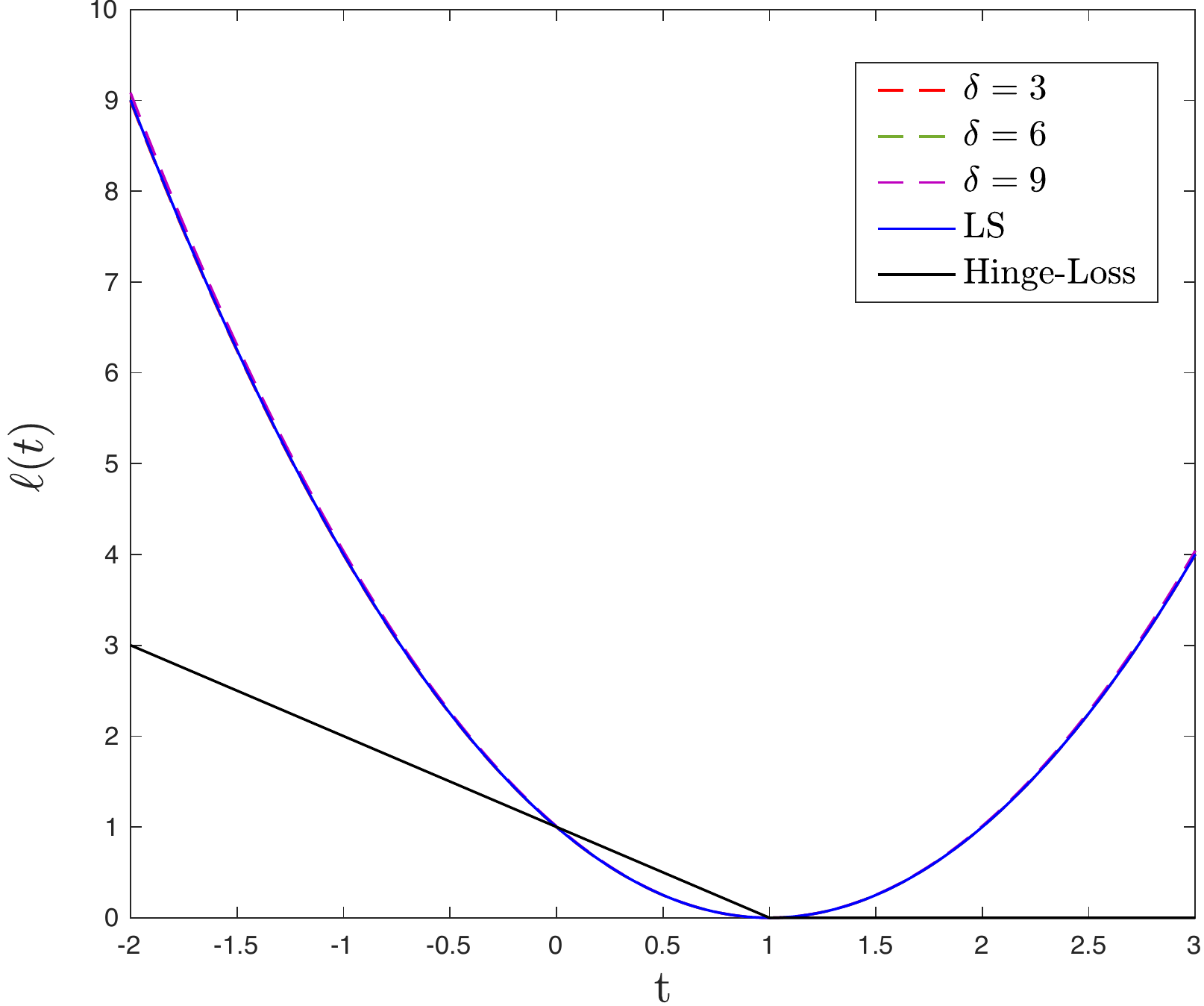}
		\caption{}
	   \label{fig:opt_probit}

    \end{subfigure}
    \caption{\footnotesize Left: Comparison between analytical and empirical results for the performance of LS, Hinge-loss and optimal loss function for Probit model. The vertical dashed line represents $\delta^\star_f \approx  2.699$, as evaluated by \eqref{eq:threshold}. Right: Illustrations of optimal loss functions for different values of $\delta$ derived according to Theorem \ref{thm:opt_loss} for Probit model. In order to signify the similarity of optimal loss function to the LS loss, the optimal loss functions (hardly visible) are scaled such that $\ell(1)=0$ and $\ell(2)=1$ }
 \end{figure*}

\vp
\noindent \textbf{Optimal loss function.}~~By putting together Theorems \ref{sec:lem} and \ref{thm:opt_loss}, we obtain a method on deriving the optimal loss function. This requires the following steps.
\\
\noindent$1$.\;\;Find $\sig_{\rm{opt}}$ by solving \eqref{eq:lem}.\\
\noindent$2$.\;\;Compute the density of $W_{\rm{opt}} = \sig_{\rm{opt}}G+SY$.\\
\noindent$3$.\;\;Compute $\ell_{\rm{opt}}$ according to \eqref{eq:opt_loss}.
\\
Note that computing $\sig_{\rm{opt}}$ needs the density function $p_W$ of the random variable $W = \sig\,G+SY$.  In principle $p_W$ can be calculated as the convolution of the Gaussian density with the pdf $p_{SY}$ of $SY$. Moreover, it follows from the recipe above that the optimal loss function depends on $\delta$ in general. This is because $\sig_{\rm opt}$ itself depends on $\delta$ via \eqref{eq:lem}.

\section{Conclusion}\label{sec:conc}
This paper derives \emph{sharp} asymptotic performance guarantees for a wide class of convex optimization based estimators for recovering a signal from binary observation models. 
  We further provide a theoretical upper bound on the best achievable performance among all convex loss functions. Using this, we develop a procedure for computing the optimal loss function. Finally, we provide numerical studies that show tight agreement with our theoretical results. Interesting future directions include studying the generalized linear measurement model beyond binary observations and characterizing the optimal loss function for such general models. 

\section*{Acknowledgment}
This work was supported by NSF Grants CCF-1755808 and CCF-1909320 and an Academic Senate Research Grant from UCSB.

%

\bibliographystyle{alpha}
\bibliography{compbib}

\newpage
\appendix
\section*{Appendix}
\section{Properties of Moreau envelopes}\label{sec:ME}
\subsection{Derivatives}
Recall the definition of the Moreau envelope $\env{\ell}{x}{\la}$ and proximal operator $\prox{\ell}{x}{\la}$ of a function $\ell$: 
\bea\label{eq:M_def2}
\env{\ell}{x}{\la} = \min_y ~\frac{1}{2\la}{(x-y)^2}+ \ell(y),
\eea
and
$\prox{\ell}{x}{\la} = \arg\min_{y}~\frac{1}{2\la}{(x-y)^2}+ \ell(y)$. 

\begin{propo}[Basic properties of $\Mc_{\ell}$ and $\rm{prox}_\ell$~\cite{rockafellar2009variational}]
\label{propo:der}
Let $\ell:\R\rightarrow\R$ be lower semi-continuous (lsc), proper and convex. The following statements hold for any $\la>0$.

\vp
\noindent{(a)} The proximal operator $\prox{\ell}{x}{\la}$ is unique and continuous. In fact, $\prox{\ell}{x}{\la}\rightarrow\prox{\ell}{x'}{\la'}$ whenever $(x,\la)\rightarrow(x',\la')$ with $\la'>0$.

\vp
\noindent{(b)} The  value $\envl{x}{\la}$ is finite and depends continuously on $(\la,x)$, with $\envl{x}{\la} \rightarrow f(x)$ for all $x$ as $\la\rightarrow 0_+$.

\vp
\noindent{(c)} The Moreau envelope function is differentiable with respect to both arguments. Specifically, for all $x\in\R$, the following properties are true:
\bea
\envdx{\ell}{x}{\la} &= \frac{1}{\la}{(x-\prox{\ell}{x}{\la})}, \label{eq:envdx} \\
\envdla{\ell}{x}{\la} &= -\frac{1}{2\la^2}{(x-\prox{\ell}{x}{\la})^2}.
\eea
If in addition $\ell$ is differentiable and $\ell^{'}$\,denotes its derivative, then
\bea
\envdx{\ell}{x}{\la}&= \elld(\proxl{x}{\la})\label{eq:envdxp},\\
\envdla{\ell}{x}{\la}&= -\frac{1}{2}(\elld(\proxl{x}{\la})^2.
\eea
\end{propo}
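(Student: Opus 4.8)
The statement to prove is Proposition \ref{propo:der}, which collects standard properties of Moreau envelopes and proximal operators. The plan is to treat each item in turn, leaning on the convexity and lower-semicontinuity hypotheses, and on the strong convexity of the objective $\tfrac{1}{2\la}(x-y)^2+\ell(y)$ in $y$ for fixed $\la>0$.

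\textbf{Part (a): Uniqueness and continuity of the proximal operator.} For fixed $x$ and $\la>0$, the map $y\mapsto \tfrac{1}{2\la}(x-y)^2+\ell(y)$ is proper, lsc, and $\tfrac{1}{\la}$-strongly convex (since $\ell$ is convex and the quadratic term is strongly convex), hence it has a unique minimizer; this gives well-definedness of $\prox{\ell}{x}{\la}$. For continuity, I would argue that $(y,x,\la)\mapsto \tfrac{1}{2\la}(x-y)^2+\ell(y)$ is lsc jointly and that the minimization is ``well-posed'' uniformly on compact sets of $(x,\la)$ with $\la$ bounded away from $0$: any sequence $(x_k,\la_k)\to(x',\la')$ with $\la'>0$ forces the minimizers $\prox{\ell}{x_k}{\la_k}$ to stay bounded (coercivity of the quadratic term, uniform in the parameters on a compact set), and any limit point must, by lsc and the strict convexity, equal $\prox{\ell}{x'}{\la'}$; uniqueness of the limit point gives convergence of the whole sequence. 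This is the standard epi-convergence / variational-stability argument and I would cite \cite{rockafellar2009variational} for the technical underpinnings rather than reproving it.

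\textbf{Part (b): Finiteness and continuity of $\envl{x}{\la}$, and the limit as $\la\to 0_+$.} Finiteness is immediate: $\envl{x}{\la}\le \ell(x)<\infty$ (take $y=x$), and $\envl{x}{\la}\ge \inf_y \ell(y) > -\infty$ since a proper lsc convex $\ell:\R\to\R$ is bounded below (it has an affine minorant). Continuity in $(x,\la)$ on $\{\la>0\}$ follows by combining (a) with the explicit formula $\envl{x}{\la} = \tfrac{1}{2\la}(x-\prox{\ell}{x}{\la})^2 + \ell(\prox{\ell}{x}{\la})$ and continuity of $\ell$ (a finite convex function on $\R$ is continuous). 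For the limit $\la\to 0_+$: monotonicity in $\la$ (the envelope increases as $\la$ decreases) gives that $\envl{x}{\la}$ has a limit; combining the upper bound $\envl{x}{\la}\le\ell(x)$ with the lower bound obtained from strong convexity (which forces $\prox{\ell}{x}{\la}\to x$ as $\la\to 0_+$) and lsc of $\ell$ at $x$ yields $\liminf_{\la\to 0_+}\envl{x}{\la}\ge\ell(x)$, hence the limit equals $\ell(x)$.

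\textbf{Part (c): Differentiability formulas.} This is the analytic heart. The cleanest route uses the fact that $\envl{x}{\la}$ is a jointly convex function of $(x,\la)$ wait --- more simply, I would invoke Danskin-type / envelope-theorem reasoning: since the minimizer is unique and continuous (part (a)), the value function is differentiable in each parameter with derivative obtained by differentiating the objective at the optimal $y=\prox{\ell}{x}{\la}$, treating $y$ as frozen. Differentiating $\tfrac{1}{2\la}(x-y)^2+\ell(y)$ in $x$ at $y=\prox{\ell}{x}{\la}$ gives $\tfrac{1}{\la}(x-\prox{\ell}{x}{\la})$, which is \eqref{eq:envdx}; differentiating in $\la$ gives $-\tfrac{1}{2\la^2}(x-\prox{\ell}{x}{\la})^2$. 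To make the envelope-theorem step rigorous one shows upper and lower difference-quotient bounds: for $h\ne 0$, $\envl{x+h}{\la}-\envl{x}{\la}\le \tfrac{1}{2\la}(x+h-p)^2-\tfrac{1}{2\la}(x-p)^2$ with $p=\prox{\ell}{x}{\la}$ (by suboptimality of $p$ for the perturbed problem), and symmetrically with $p'=\prox{\ell}{x+h}{\la}$, then sandwich and let $h\to 0$ using continuity of $p$ from (a). The identity $\ell'(\prox{\ell}{x}{\la}) = \tfrac{1}{\la}(x-\prox{\ell}{x}{\la})$ when $\ell$ is differentiable is just the first-order stationarity condition for the (unconstrained, smooth, strictly convex) proximal subproblem, which then upgrades \eqref{eq:envdx} to \eqref{eq:envdxp} and likewise for the $\la$-derivative.

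\textbf{Main obstacle.} The genuinely delicate point is the rigorous justification of the envelope-theorem differentiation in part (c) --- i.e., showing that the derivative of the value function is exactly the partial derivative of the objective at the frozen optimizer, with no extra term from the variation of the minimizer. Everything hinges on the continuity of $\prox{\ell}{x}{\la}$ established in (a) together with the two-sided suboptimality sandwich; the strong convexity (uniform modulus $1/\la$ on a neighborhood) is what prevents the minimizer from moving too wildly. I would be careful to state the sandwich inequalities explicitly and to note that they are uniform enough to pass to the limit. Since all of this is classical, in the write-up I would present the sandwich argument for \eqref{eq:envdx} in a few lines and defer the remaining routine variants to \cite{rockafellar2009variational}.
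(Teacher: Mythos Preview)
Your proposal is correct in substance and follows the standard arguments. However, you should note that the paper does not actually prove this proposition: it is stated with an explicit citation to \cite{rockafellar2009variational} in its header and is used as a black-box collection of known facts about Moreau envelopes and proximal operators. So there is no ``paper's own proof'' to compare against --- the paper simply defers to Rockafellar--Wets.

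That said, the proof sketch you give is exactly the classical route and is essentially what one finds in \cite{rockafellar2009variational}: uniqueness of the proximal operator via strong convexity of the augmented objective, continuity via a boundedness-plus-limit-point argument (equivalently epi-convergence), and the derivative formulas in part (c) via the two-sided suboptimality sandwich combined with continuity of the minimizer from part (a). Your identification of the first-order stationarity condition $\elld(\prox{\ell}{x}{\la}) = \tfrac{1}{\la}(x-\prox{\ell}{x}{\la})$ as the bridge to \eqref{eq:envdxp} is also standard and correct. Since the paper's intent is precisely to cite rather than reprove these facts, your concluding remark --- present the sandwich for \eqref{eq:envdx} briefly and defer the rest to \cite{rockafellar2009variational} --- is already more than the paper itself does.
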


\subsection{Alternative representations of \eqref{eq:eq_main}}
Replacing the above relations for derivative of $\Mc_{\ell}$ in \eqref{eq:eq_main}, we can write the equations in terms of the proximal operator. If $\ell$ is differentiable then the Equations \eqref{eq:eq_main}
 can be equivalently written as follows:
\begin{subequations}\label{eq:eq_main22}
\bea
 \Exp\bigg[Y\, S \cdot\elld\left( \proxl{\ourx}{\la} \right)  \bigg]\label{eq:mu_main22}&=0,\\
{\la^2}\,{\delta}\,\Exp\bigg[\left(\elld\left( \proxl{\ourx}{\la} \right)\right)^2\bigg]&=\alpha^2,  \label{eq:alpha_main22}\\
\lambda\,\delta\,\E\bigg[ G\cdot \elld\left(\proxl{\ourx}{\la}\right)  \bigg]&=\alpha. \label{eq:lambda_main22} 
\eea
\end{subequations}
Finally, if $\ell$ is two times differentiable then applying integration by parts  in Equation \eqref{eq:lambda_main2} results in the following reformulation of  \eqref{eq:lambda_main}:
\bea\label{eq:lambda_main3}
1 &= \lambda\,\delta\, \Exp\left[\frac{\elldd\left( \proxl{\ourx}{\la} \right)}{1+\la\, \elldd\left( \proxl{\ourx}{\la} \right)}\right].
\eea

\subsection{Examples of proximal operators}
%
\paragraph{LAD.} For $\ell(t) = |t-1|$ the proximal operator admits a simple expression, as follows:
\bea
\prox{\ell}{x}{\lambda} = 1+ \soft{x-1}{\la},
\eea
where 
$$
\soft{x}{\la} =   \begin{cases}
      x-\lambda, & \text{if}\ x>\lambda, \\
      x+\lambda, & \text{if}\ x<-\lambda, \\
      0,&\text{otherwise}.
    \end{cases}
$$
is the standard soft-thresholding function.

\paragraph{Hinge-Loss.} When $\ell(t) = \max\{0,1-t\}$, the  proximal operator can be expressed in terms of the soft-thresholding function as follows:
\begin{align*}
\prox{\ell}{x}{\la} = 1 +\soft{x+\frac{\la}{2}-1}{\frac{\la}{2}}.
\end{align*}

\subsection{Fenchel-Legendre conjugate representation}
For a function $h:\mathbb{R}\rightarrow\mathbb{R}$, its Fenchel-Legendre conjugate, $h^\star:\mathbb{R}\rightarrow\mathbb{R}$ is defined as :
\begin{align*}
h^\star(x) = \max_{y}\left[xy-h(y)\right].
\end{align*}
The following proposition relates Moreau Envelope of a function to its Fenchel-Legendre conjugate. 
\begin{propo} \label{propo:FL} 
For $\la >0$ and a function $h$, we have:\\
\begin{align}\label{eq:FLME}
\env{h}{x}{\la} = \frac{q(x)}{\la}-\frac{1}{\la}\left(q+\la h\right)^\star(x),
\end{align}
where $q(x)= x^2/2.$
\end{propo}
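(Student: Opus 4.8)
The plan is to prove Proposition \ref{propo:FL} by a direct computation that rewrites the Moreau envelope as an infimal convolution and then invokes the standard duality between infimal convolution and addition of Fenchel conjugates. First I would recall the definition $\env{h}{x}{\la} = \min_y \left[ \frac{1}{2\la}(x-y)^2 + h(y) \right]$ and factor out $1/\la$ to write $\la\,\env{h}{x}{\la} = \min_y \left[ \tfrac12 (x-y)^2 + \la h(y) \right] = \min_y \left[ q(x-y) + (\la h)(y) \right]$, which is exactly the infimal convolution $\bigl(q \,\square\, (\la h)\bigr)(x)$, where $q(x) = x^2/2$.

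The key algebraic fact I would use is that the Fenchel conjugate of an infimal convolution is the sum of the conjugates: $(f \,\square\, g)^\star = f^\star + g^\star$ (under mild properness/convexity conditions, which hold here since $q$ is finite-valued and coercive). Applying this with $f = q$ and $g = \la h$, and using the self-conjugacy $q^\star = q$, gives $\bigl(q \,\square\, (\la h)\bigr)^\star = q + (\la h)^\star$. Taking conjugates once more and using biconjugation $(u^\star)^\star = u$ for proper closed convex $u$ (or, more robustly, using the fact that $q\,\square\,(\la h)$ is already convex and lower semicontinuous because $q$ is strongly convex and continuous), I would obtain
\[
\la\,\env{h}{x}{\la} = \bigl(q \,\square\, (\la h)\bigr)(x) = \bigl(q + (\la h)^\star\bigr)^\star(x).
\]
Now I would apply the elementary identity $(q + \phi)^\star(x) = q(x) - \phi^\star(x)$ whenever this makes sense — this is itself a consequence of the conjugate-of-infimal-convolution rule run in the other direction, since $(q+\phi)^\star = q^\star \,\square\, \phi^\star = q \,\square\, \phi^\star$, and a short completion-of-the-square computation shows $\bigl(q \,\square\, \psi\bigr)(x) = q(x) - (q - \psi)\ldots$; cleaner is to just directly verify $\env{h}{x}{\la} = \frac{q(x)}{\la} - \frac{1}{\la}(q + \la h)^\star(x)$ by substituting $\phi^\star = (\la h)^\star$ above. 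Rearranging $\la\,\env{h}{x}{\la} = q(x) - (q + \la h)^\star(x)$ — which follows by a one-line completion of the square from $\bigl(q + (\la h)^\star\bigr)^\star(x) = \sup_y \bigl[ xy - \tfrac12 y^2 - (\la h)^\star(y)\bigr] = q(x) - \inf_y\bigl[\tfrac12(x-y)^2 + \ldots\bigr]$ — and dividing by $\la$ yields \eqref{eq:FLME}.

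The main obstacle is bookkeeping around the regularity hypotheses: the proposition as stated says only ``a function $h$'', so I would need either to add the implicit assumption that $h$ is proper, lower semicontinuous and convex (which is the standing assumption on loss functions in the paper), or to note that the identity \eqref{eq:FLME} can be checked by the direct completion-of-the-square argument $\min_y\bigl[\tfrac{1}{2\la}(x-y)^2 + h(y)\bigr]$ versus $\sup_y\bigl[xy - \tfrac12 y^2 - \la h(y)\bigr]$ after the change of variables making the quadratic terms match, which requires only that the relevant infima/suprema be attained or that we work with extended-real values consistently. I expect the cleanest writeup to avoid the infimal-convolution machinery entirely and instead: (i) expand $(q+\la h)^\star(x) = \sup_y [xy - \tfrac12 y^2 - \la h(y)]$, (ii) complete the square $xy - \tfrac12 y^2 = \tfrac12 x^2 - \tfrac12(x-y)^2$, (iii) pull out $\tfrac12 x^2 = q(x)$ from the sup, and (iv) recognize the remaining $\sup_y\bigl[-\tfrac12(x-y)^2 - \la h(y)\bigr] = -\inf_y\bigl[\tfrac12(x-y)^2 + \la h(y)\bigr] = -\la\,\env{h}{x}{\la}$, giving $(q+\la h)^\star(x) = q(x) - \la\,\env{h}{x}{\la}$, which rearranges to the claim. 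That argument is essentially computational and uses no duality theorem at all, so it sidesteps the regularity subtleties.
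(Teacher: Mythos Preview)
Your proposal is correct, and the direct completion-of-the-square argument you land on at the end (steps (i)--(iv)) is exactly the paper's proof, just written starting from $(q+\la h)^\star$ rather than from $\env{h}{x}{\la}$. The infimal-convolution and biconjugation machinery you outline first is valid but unnecessary overhead; as you yourself note, the elementary expansion $xy - \tfrac12 y^2 = \tfrac12 x^2 - \tfrac12(x-y)^2$ does all the work without any regularity hypotheses beyond what is implicit in the definitions.
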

\begin{proof}
\begin{dmath*}
\env{h}{x}{\la} = \frac{1}{2\la}\min_y \left[(x-y)^2 + 2\la h(y)\right]  \\
=\frac{x^2}{2\la} + \frac{1}{2\la}\min_y\left[ y^2-2xy+2\la h(y)\right] \\
=\frac{x^2}{2\la} - \frac{1}{\la}\max_y \left[xy-\left(y^2/2+\la h(y)\right)\right] \\
=\frac{q(x)}{\la} -\frac{1}{\la} \left(q+\la h\right)^\star(x). 
\end{dmath*}
\end{proof}

\subsection{Convexity of the Moreau envelope}

\begin{lem}\label{lem:H_cvx} The function $H:\R^3\rightarrow\R$ defined as follows
\bea\label{eq:H_def}
H(x,v,\la) = \frac{1}{2\la}(x-v)^2,
\eea
is jointly convex in its arguments.
\end{lem}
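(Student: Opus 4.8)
The plan is to recognize $H$ as an affine precomposition of the \emph{perspective} of the elementary convex function $g(w)=w^2/2$, and then invoke the standard fact that the perspective operation and precomposition with an affine map both preserve convexity. Throughout, one should keep in mind that the statement is meant on the domain $\R^2\times\{\la>0\}$ (the formula makes sense, and is convex, only for $\la>0$); this is consistent with how $H$ enters the definition of the Moreau envelope, where $\la>0$ always.

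First I would recall: for a convex $g:\R\to\R$, the perspective $\widetilde{g}(w,t):=t\,g(w/t)$ is jointly convex on $\R\times\{t>0\}$. Applying this with $g(w)=w^2/2$ gives
\[
\widetilde{g}(w,t)=t\cdot\frac{(w/t)^2}{2}=\frac{w^2}{2t},
\]
which is therefore jointly convex on $\R\times\{t>0\}$. Second, observe that $H(x,v,\la)=\widetilde{g}(x-v,\la)$, i.e.\ $H=\widetilde{g}\circ T$ where $T(x,v,\la)=(x-v,\la)$ is linear and maps $\{\la>0\}$ into the domain of $\widetilde{g}$. Since convexity is preserved under precomposition with an affine map, $H$ is jointly convex in $(x,v,\la)$ on $\R^2\times\{\la>0\}$, which is the claim.

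As a self-contained alternative, one can simply differentiate. Writing $u:=x-v$, a short computation gives the rank-one factorization
\[
\nabla^2 H(x,v,\la)=\frac{1}{\la}\,\w\w^{\top},\qquad \w:=\begin{pmatrix}1\\ -1\\ -u/\la\end{pmatrix},
\]
which is positive semidefinite for every $\la>0$; hence $H$ is convex. There is no real obstacle in this lemma: the only points requiring a moment of care are to keep the restriction $\la>0$ explicit (the perspective lemma needs the scaling variable strictly positive, and the result genuinely fails otherwise), and, in the second approach, to spot the rank-one structure of the Hessian so that positive semidefiniteness is immediate rather than requiring a determinant/minor computation.
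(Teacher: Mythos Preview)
Your proof is correct and takes essentially the same approach as the paper: the paper applies the perspective construction directly to the convex function $h(x,v)=(x-v)^2$, obtaining $\la\,h(x/\la,v/\la)=(x-v)^2/\la=2H(x,v,\la)$, which is jointly convex on $\R^2\times\{\la>0\}$. Your version factors this one step further (perspective of $g(w)=w^2/2$ followed by the linear map $(x,v,\la)\mapsto(x-v,\la)$) and also supplies the direct rank-one Hessian verification; both are fine and the explicit remark about the domain $\la>0$ is a useful clarification.
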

\begin{proof}
Note that the function $h(x,v)=(x-v)^2$ is jointly convex in $(x,v)$. Thus its perspective function  $$\la \,h(x/\la,v/\la)=(x-v)^2/\la = 2H(x,v,
\la)$$ is jointly convex in $(x,v,\la)$ \cite[Sec.~2.3.3]{boyd2009convex}, which completes the proof.
\end{proof}

\begin{propo}\label{lem:min_cvx} \noindent{(a)} [Prop.~2.22\cite{rockafellar2009variational}] Let $f(x,y)$ be jointly convex in its arguments. Then, the function $g(x)=\min_y f(x,y)$ is convex.\\[4pt] 
\noindent{(b)} [Sec.~3.2.3\cite{boyd2009convex}] Suppose \(f_{i}: \R \rightarrow \mathbb{R}\) is a set of concave functions, with \(i \in A\) an index set. Then the function $f:\R\rightarrow\R$ defined as \(f(x):=\inf _{i \in A} f_{i}(x)\) is concave.
\end{propo}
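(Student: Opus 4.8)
The plan is to prove the two parts separately, each by a direct verification of the defining inequality for convexity (resp.\ concavity), using an $\eps$-minimizer argument in part (a) to sidestep the fact that the partial infimum need not be attained.

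For part (a), fix $x_1,x_2\in\R$ and $\theta\in[0,1]$ and set $x_\theta:=\theta x_1+(1-\theta)x_2$. If $g(x_1)=+\infty$ or $g(x_2)=+\infty$ the inequality $g(x_\theta)\le\theta g(x_1)+(1-\theta)g(x_2)$ is trivial, so assume both are finite. Given $\eps>0$, pick $y_1,y_2$ with $f(x_i,y_i)\le g(x_i)+\eps$ and set $y_\theta:=\theta y_1+(1-\theta)y_2$. Then, using that $g$ is a partial minimum and that $f$ is jointly convex,
\[
g(x_\theta)\le f(x_\theta,y_\theta)\le\theta f(x_1,y_1)+(1-\theta)f(x_2,y_2)\le\theta g(x_1)+(1-\theta)g(x_2)+\eps,
\]
and letting $\eps\downarrow 0$ gives the claim. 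One should also record the (automatic, in the applications here) fact that $g>-\infty$ when $f$ is bounded below, so that $g$ is a genuine proper convex function; this is the form in which the statement is used for the expected Moreau envelope.

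For part (b) the argument is shorter. Fix $x_1,x_2,\theta$ as above. For every $i\in A$, concavity of $f_i$ and $f_i\ge f$ pointwise give $f_i(x_\theta)\ge\theta f_i(x_1)+(1-\theta)f_i(x_2)\ge\theta f(x_1)+(1-\theta)f(x_2)$. Since the right-hand side is independent of $i$, taking the infimum over $i$ on the left yields $f(x_\theta)\ge\theta f(x_1)+(1-\theta)f(x_2)$, i.e.\ $f$ is concave. Equivalently, one can argue at the level of epigraphs: $-f=\sup_i(-f_i)$ is a pointwise supremum of convex functions, so $\operatorname{epi}(-f)=\bigcap_i\operatorname{epi}(-f_i)$ is an intersection of convex sets, hence convex.

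The only real subtlety is the non-attainment of the infimum in (a) together with the bookkeeping of $\pm\infty$ values; handling these via the $\eps$-minimizer device and a one-line properness remark is essentially the whole ``hard'' step. Both statements are otherwise entirely standard — they are precisely \cite[Prop.~2.22]{rockafellar2009variational} and \cite[Sec.~3.2.3]{boyd2009convex} as cited — so it would also be legitimate to simply invoke those references rather than reproduce the short arguments above.
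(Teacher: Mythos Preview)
Your proof is correct and entirely standard. The paper itself does not prove this proposition at all: it simply states the two facts with attributions to \cite[Prop.~2.22]{rockafellar2009variational} and \cite[Sec.~3.2.3]{boyd2009convex} and uses them as black boxes. Your $\eps$-minimizer argument for (a) and the pointwise-infimum/epigraph argument for (b) are precisely the textbook proofs behind those citations, so you have supplied what the paper omits; as you note yourself, simply invoking the references would also be acceptable.
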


\begin{lem}\label{lem:Moreau_cvx}
Let $\ell:\R\rightarrow\R$ be a lsc, proper, convex function. Then, $\env{\ell}{x}{\la}$ is \emph{jointly} convex in $(x,\la)$.
\end{lem}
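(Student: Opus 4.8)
The plan is to write the Moreau envelope as a partial minimization of a jointly convex function of three variables, and then invoke the two results that precede the lemma. First I would introduce $F:\R^3\rightarrow\R\cup\{+\infty\}$ defined by
$F(x,\la,v) := H(x,v,\la) + \ell(v) = \frac{1}{2\la}(x-v)^2 + \ell(v)$,
with the usual convention that the perspective term $H$ equals $+\infty$ when $\la\le 0$, so that by the definition \eqref{eq:M_def2} one has $\env{\ell}{x}{\la} = \min_v F(x,\la,v)$.

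Next I would verify that $F$ is jointly convex in $(x,\la,v)$. The first summand $H(x,v,\la)$ is jointly convex in its three arguments by Lemma \ref{lem:H_cvx}. The second summand $\ell(v)$ is convex in $v$ by hypothesis, and since it does not depend on $(x,\la)$ it is trivially jointly convex when viewed as a function on $\R^3$. A sum of jointly convex functions is jointly convex, so $F$ is jointly convex on $\R\times\R_{++}\times\R$ (equivalently, as an extended-valued convex function on $\R^3$).

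Finally I would apply Proposition \ref{lem:min_cvx}(a), taking the "kept" variable to be the pair $(x,\la)$ and the variable to be minimized out to be $v$: since $F$ is jointly convex in $\big((x,\la),v\big)$, the function $(x,\la)\mapsto \min_v F(x,\la,v) = \env{\ell}{x}{\la}$ is convex in $(x,\la)$, which is exactly the claim. To rule out degenerate values, I would add the remark that for $\la>0$ Proposition \ref{propo:der}(b) guarantees $\env{\ell}{x}{\la}$ is finite (with the minimizer $v=\prox{\ell}{x}{\la}$ attained), so the partial minimization yields a genuine real-valued convex function on $\R\times\R_{++}$ rather than the constant $-\infty$.

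I do not anticipate a real obstacle: the argument is a direct composition of Lemma \ref{lem:H_cvx} and Proposition \ref{lem:min_cvx}(a). The only point requiring a little care is that Proposition \ref{lem:min_cvx}(a) is being used with a vector-valued kept variable $(x,\la)$ in place of a scalar, and that one restricts attention to $\la>0$ (equivalently, uses the extended-valued perspective convention for $H$) so that $H$, and hence $F$, is genuinely jointly convex rather than merely convex along slices.
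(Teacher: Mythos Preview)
Your proposal is correct and follows essentially the same approach as the paper: write $\env{\ell}{x}{\la}=\min_v\big(H(x,v,\la)+\ell(v)\big)$, use Lemma~\ref{lem:H_cvx} together with convexity of $\ell$ to conclude the integrand is jointly convex in $(x,v,\la)$, and then apply Proposition~\ref{lem:min_cvx}(a). Your added remarks about the extended-valued convention for $\la\le 0$ and finiteness via Proposition~\ref{propo:der}(b) are nice touches of rigor that the paper leaves implicit.
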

\begin{proof}
Recall that 
\bea
\env{\ell}{x}{\la} = \min_v \;G(\ab):= {\frac{1}{2\la}(x-v)^2 + \ell(v)},\label{eq:defn_mor_lem}
\eea
where for compactness, we let $\ab\in\R^3$ denote the triplet $(x,v,\la)$.   Now, let $\ab_i=(x_i,v_i,\la_i),~i=1,2$, $\theta\in (0,1)$ and $\thetao:=1-\theta$. With this notation, we may write
\bea\nn
G(\theta \ab_1 + \thetao \ab_2) &= H\left(\theta x_1+\thetao x_2, \theta\la_1 + \thetao \la_2, \theta v_1 + \thetao v_2 \right) + \ell(\theta v_1 + \thetao v_2) \\[4pt]
&\leq \theta H(x_1,v_1,\la_1)+ \thetao  H(x_2,v_2,\la_2) + \theta\ell(v_1)  + \thetao \ell(v_2) \nn\\[4pt]
&= \theta G(\ab_1) + \thetao G(\ab_2).\nn
\eea
For the first equality above we recalled the definition of $H:\R^3\rightarrow\R$ in \eqref{eq:H_def} and the inequality right after follows from Lemma \ref{lem:H_cvx} and convexity of $\ell$. Thus, the function $G$ is jointly convex in its arguments. Using this fact, as well as \eqref{eq:defn_mor_lem}, and applying Proposition \ref{lem:min_cvx}(a) completes the proof.
\end{proof}


\subsection{The expected Moreau-envelope (EME) function and its properties}\label{sec:strict_EME}

The performance of the ERM estimator \eqref{eq:gen_opt} is governed by the system of equations \eqref{eq:eq_main} in which the Moreau envelope function $\env{\ell}{x}{\la}$ of the loss function $\ell$ plays a central role. More precisely, as already hinted by \eqref{eq:eq_main} and will become clear in Appendix \ref{sec:proof}, what governs the behavior is the function
\bea\label{eq:EME_YS}
(\alpha>0,\mu,\tau>0,\gamma>0) \mapsto \E\big[\env{\ell}{\alpha G + \mu SY}{\tau/\gamma}\big],
\eea 
which we call the expected Moreau envelope (EME). Recall here that $Y=f(S)$. Hence, the EME is the key summary parameter that captures the role of both the loss function $\ell:\R\rightarrow\R$ and of the link function $f:\R\rightarrow\{\pm1\}$ on the statistical performance of \eqref{eq:gen_opt}.

In this section, we study several favorable properties of the EME. In \eqref{eq:EME_YS} the expectation is over $G,S\simiid\Nn(0,1)$. We first study the EME under more general distribution assumptions in Sections \ref{sec:EME_der}--\ref{sec:EME_conc} and we then specialize our results to Gaussian random variables $G$ and $S$ in Section \ref{sec:EME_Gauss}.

\subsubsection{Derivatives}\label{sec:EME_der}

\begin{propo}\label{propo:der_EME}
Let $\ell:\R\rightarrow\R$ be a lsc, proper and convex function. Further let $X,Z$ be independent random variables with bounded second moments $\E[X^2]<\infty$, $\E[Z^2]<\infty$. Then the expected Moreau envelope function $\E\left[\env{\ell}{c X +  Z}{\la}\right]$, is differentiable with respect to both $c$ and $\la$ and the derivatives are given as follows:
\bea
\frac{\partial}{\partial c}  \E\Big[\env{\ell}{c X +  Z}{\la}\Big] &= \E\Big[X\envdx{\ell}{cX+Z}{\la}\Big], \label{eq:eenvdx} \\
\frac{\partial}{\partial \la} \E\Big[\env{\ell}{c X +  Z}{\la}\Big] &= \E\Big[\envdla{\ell}{cX+Z}{\la}\Big]. \label{eq:eenvdla} 
\eea
\end{propo}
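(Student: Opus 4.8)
\textbf{Proof plan for Proposition \ref{propo:der_EME}.}

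The plan is to establish differentiability by combining two ingredients: (i) the Moreau envelope $\env{\ell}{x}{\la}$ is, for each fixed $(x,\la)$ with $\la>0$, smooth in $(x,\la)$ with explicit derivatives given by Proposition \ref{propo:der}(c); and (ii) a domination argument that licenses interchanging differentiation and expectation. First I would fix a compact interval of values of $c$ (resp.\ of $\la$, bounded away from $0$) around the point of interest, and write the difference quotient
$$
\frac{\E\big[\env{\ell}{(c+h)X+Z}{\la}\big]-\E\big[\env{\ell}{cX+Z}{\la}\big]}{h} = \E\left[\frac{\env{\ell}{(c+h)X+Z}{\la}-\env{\ell}{cX+Z}{\la}}{h}\right].
$$
By the mean value theorem applied to $t\mapsto \env{\ell}{tX+Z}{\la}$ (which is differentiable by Proposition \ref{propo:der}(c)), the integrand equals $X\cdot\envdx{\ell}{\theta X + Z}{\la}$ for some random $\theta$ between $c$ and $c+h$. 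Letting $h\to 0$, the integrand converges pointwise to $X\,\envdx{\ell}{cX+Z}{\la}$ by the continuity of $\envdx{\ell}{\cdot}{\la}$ (which follows from continuity of the proximal operator, Proposition \ref{propo:der}(a), together with the identity $\envdx{\ell}{x}{\la}=\tfrac1\la(x-\prox{\ell}{x}{\la})$). So the result follows from the dominated convergence theorem, provided I can produce an integrable dominating function for $|X|\,\big|\envdx{\ell}{\theta X+Z}{\la}\big|$ uniformly over small $h$.

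The key estimate, and the main obstacle, is therefore a linear growth bound on $\envdx{\ell}{x}{\la}$ in $x$. I would argue as follows. Since $\ell$ is proper, lsc and convex, it has a nonempty subdifferential at some point $x_0$; pick $g_0\in\partial\ell(x_0)$. The proximal point $p:=\prox{\ell}{x}{\la}$ satisfies the optimality condition $\tfrac1\la(x-p)\in\partial\ell(p)$, and by monotonicity of $\partial\ell$, $\big(\tfrac1\la(x-p)-g_0\big)(p-x_0)\ge 0$, which rearranges to give $|p-x_0|\le |x-x_0| + \la|g_0|$, hence $|p|\le |x| + C_0$ for a constant $C_0$ depending only on $x_0,g_0,\la$ (and these can be taken uniform as $\la$ ranges over a compact subset of $(0,\infty)$). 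Consequently $\big|\envdx{\ell}{x}{\la}\big| = \tfrac1\la|x-p| \le \tfrac1\la\big(|x-x_0|+\la|g_0|\big) \le C_1(1+|x|)$. Plugging in $x=\theta X+Z$ with $|\theta|\le |c|+1$ gives the bound $|X|\big|\envdx{\ell}{\theta X+Z}{\la}\big| \le C_1|X|\big(1+(|c|+1)|X|+|Z|\big)$, whose expectation is finite because $\E[X^2]<\infty$, $\E[Z^2]<\infty$ and hence $\E[|X||Z|]<\infty$ by \Cauchy. This is the integrable dominating function, and dominated convergence yields \eqref{eq:eenvdx}.

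For \eqref{eq:eenvdla} the structure is identical: the difference quotient in $\la$ has integrand $\envdla{\ell}{cX+Z}{\xi}$ for some $\xi$ between $\la$ and $\la+h$, which converges pointwise to $\envdla{\ell}{cX+Z}{\la}$ by continuity of the proximal operator; and the domination uses $\big|\envdla{\ell}{x}{\la}\big| = \tfrac1{2\la^2}(x-\prox{\ell}{x}{\la})^2 \le \tfrac{C_1^2}{2\la^2}(1+|x|)^2$, which, after restricting $\la$ to a compact subinterval of $(0,\infty)$ so that $\la$ stays bounded away from $0$, is again dominated by an integrable function of $(X,Z)$ using the second moment assumptions. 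One minor point to address is measurability of the random variable $\theta$ (resp.\ $\xi$) produced by the mean value theorem; this can be sidestepped entirely by instead writing $\env{\ell}{(c+h)X+Z}{\la}-\env{\ell}{cX+Z}{\la} = \int_0^1 X\,\envdx{\ell}{(c+th)X+Z}{\la}\,h\,\mathrm{d}t$ via the fundamental theorem of calculus, so that the difference quotient is literally $\int_0^1 X\,\envdx{\ell}{(c+th)X+Z}{\la}\,\mathrm{d}t$, and then apply dominated convergence (using the same bound, now uniformly in $t\in[0,1]$) to pass $h\to0$ inside both the $t$-integral and the expectation. That formulation makes the argument completely clean.
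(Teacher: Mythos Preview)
Your proposal is correct and follows essentially the same approach as the paper: both proofs apply the dominated convergence theorem, using a linear growth bound on $x\mapsto\prox{\ell}{x}{\la}$ to produce an integrable dominating function quadratic in $(X,Z)$. The only cosmetic difference is that the paper obtains the bound $|\prox{\ell}{x}{\la}|\le |x|+|\prox{\ell}{0}{\la}|$ by invoking non-expansiveness of the proximal operator (citing \cite[Prop.~12.19]{rockafellar2009variational}), whereas you derive the equivalent estimate $|p-x_0|\le |x-x_0|+\la|g_0|$ directly from monotonicity of $\partial\ell$; your version is slightly more self-contained, and your care with the difference-quotient/FTC formulation is cleaner than the paper's somewhat informal statement of the DCT hypothesis.
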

\begin{proof}
The proof is an application of the Dominated Convergence Theorem (DCT). First, by Proposition \ref{propo:der}(b), for every $c\in\R$ and any $\la>0$ the function $\E[\env{\ell}{c X +  Z}{\la}]$ takes a finite value. Second, by Proposition \ref{propo:der}(c) $\env{\ell}{c x +  z}{\la}$ is continuously differentiable with respect to both $c$ and $\la$:
\bea\nn
\frac{\partial}{\partial c}\env{\ell}{c X +  Z}{\la} &=  X\envdx{\ell}{cX+Z}{\la} = X\frac{1}{\la}\big(cX+Z - \prox{\ell}{cX+Z}{\la}\big),\\
\frac{\partial}{\partial \la}\env{\ell}{c X +  Z}{\la} &=  \envdla{\ell}{cX+Z}{\la} = -\frac{1}{2\la^2}\big(cX+Z - \prox{\ell}{cX+Z}{\la}\big)^2.\nn
\eea
From this, note that Cauchy-Schwarz inequality gives
$$
\E\Big[\frac{\partial}{\partial c}\env{\ell}{c X +  Z}{\la} \Big] \leq  \left(\E[X^2])^{1/2}\right) \Big(\E\Big[\frac{1}{\la^2}\underbrace{\big(cX+Z - \prox{\ell}{cX+Z}{\la}\big)^2}_{:=A}\Big]\Big)^{1/2},
$$
Therefore, the remaining condition to check so that DCT can be applied is that the term $A/\la^2$ above is integrable. To begin with, we can easily bound $A$ as:
$
A \leq 2 (cX+Z)^2 + 2 (\prox{\ell}{cX+Z}{\la})^2.
$
Next, by non-expansiveness (Lipschitz property) of the proximal operator \cite[Prop.~12.19]{rockafellar2009variational} we have that
$
|\prox{\ell}{cX+Z}{\la}| \leq |cX+Z| + |\prox{\ell}{0}{\la}|.
$
Putting together, we find that 
$$
A \leq 6 (cX+Z)^2 + 2  |\prox{\ell}{0}{\la}|^2 \leq 12c^2X^2 +  12Z^2 + 2  |\prox{\ell}{0}{\la}|^2.
$$
We consider two cases. First, for fixed $\la>0$ and any compact interval $\Ic$, we have that
$$
\E\sup_{c\in\Ic}[A] \leq 12(\sup_{c\in\Ic}c^2)\E[X^2]+12\E[Z]^2 + 2 |\prox{\ell}{0}{\la}|^2 < \infty.
$$
Similarly, for fixed $c$ and any compact interval $\Jc$ on the positive real line, we have that
$$
\E\sup_{\la\in\Jc}[A/\la^2] \leq 12\sup_{\la\in\Jc}\frac{c^2\E[X^2] + \E[Z]^2}{\la^2} + 2 \sup_{\la\in\Jc}\frac{|\prox{\ell}{0}{\la}|^2}{\la^2} < \infty,
$$
where we also used boundedness of the proximal operator (cf. Proposition \ref{propo:der}(a)). This completes the proof.
\end{proof}

\subsubsection{Strict convexity}
We study convexity properties of the \emph{expected Moreau envelope function} $\Psi:\R^3\rightarrow\R$: 
\bea\label{eq:EM_def}
\Psi(\vb) := \Psi(\al,\mu,\la):=\E\Big[\env{\ell}{\al X + \mu Z}{\la}\Big],
\eea
for a lsc, proper, convex function $\ell$ and independent random variables $X$ and $Z$ with positive densities. 
Here and onwards,  we let $\vb\in\R^3$ denote a triplet $(\al,\mu,\la)$ and the expectation is over the randomness of $X$ and $Z$. 
From Lemma \ref{lem:Moreau_cvx}, it is easy to see that $\Psi(\vb)$ is convex. In this section, we prove a stronger claim: 

\begin{center}
``\,If $\ell$ is strictly convex and does not attain its minimum at $0$, then $\Psi(\vb)$ is also strictly convex.\,''
\end{center}

\noindent This is summarized in Proposition \ref{propo:strict_EME} below.

\begin{propo}[Strict convexity]\label{propo:strict_EME}
 Let $\ell:\R\rightarrow\R$ be a function with the following properties: (i) it is proper strictly convex; (ii) it is continuously differentiable and its derivative $\ellp$ is such that $\ellp(0)\neq 0$. Further let $X,Z$ be independent random variables with strictly positive densities. Then, the function $\Psi:\R^3\rightarrow\R$ in \eqref{eq:EM_def} is jointly strictly convex in its arguments.
\end{propo}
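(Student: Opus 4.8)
The goal is to prove that $\Psi(\vb) = \E[\env{\ell}{\al X + \mu Z}{\la}]$ is jointly \emph{strictly} convex in $\vb = (\al,\mu,\la)$. The starting point is the known fact (Lemma \ref{lem:Moreau_cvx}) that for fixed $x$ the map $(x,\la)\mapsto \env{\ell}{x}{\la}$ is jointly convex, and hence, after composing with the affine map $(\al,\mu)\mapsto \al x + \mu z$ and integrating, $\Psi$ is convex. To upgrade this to \emph{strict} convexity, I would argue by contradiction: suppose $\Psi$ is affine along some segment, i.e. there exist distinct $\vb_1 = (\al_1,\mu_1,\la_1)$, $\vb_2 = (\al_2,\mu_2,\la_2)$ with $\Psi(\tfrac12\vb_1 + \tfrac12\vb_2) = \tfrac12\Psi(\vb_1) + \tfrac12\Psi(\vb_2)$ (midpoint suffices by continuity/convexity). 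Since the integrand $G(x,v,\la) = \tfrac{1}{2\la}(x-v)^2 + \ell(v)$ underlying $\env{\ell}{\cdot}{\cdot}$ is convex and the expectation over the strictly-positive-density pair $(X,Z)$ can only vanish if the convexity defect vanishes almost surely, the equality must propagate: for (Lebesgue-)almost every realization $(x,z)$, the function $\la' \mapsto \env{\ell}{\al' x + \mu' z}{\la'}$ must be affine along the corresponding segment, and moreover the proximal minimizers must be consistent with that affine behavior.

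The key technical step will be a pointwise lemma: for fixed $(x,z)$ with $x \neq 0$ (or $z\neq 0$), the map $(\al,\mu,\la)\mapsto \env{\ell}{\al x + \mu z}{\la}$ is strictly convex \emph{on the set where it matters}, or at least the only way it can fail to be strict is through a degeneracy that cannot hold simultaneously for a positive-measure set of $(x,z)$. Concretely, I expect to use Proposition \ref{propo:der}(c): $\envdx{\ell}{u}{\la} = \elld(\prox{\ell}{u}{\la})$ and $\envdla{\ell}{u}{\la} = -\tfrac12 \elld(\prox{\ell}{u}{\la})^2$. Differentiating $\Psi$ twice (justified by the DCT argument of Proposition \ref{propo:der_EME}, extended to second derivatives) and examining the Hessian, strict convexity fails only if certain linear combinations of $X$, $Z$, and $\elld(\prox{\ell}{\al X + \mu Z}{\la})$ are almost surely constant along the segment. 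Because $X$ and $Z$ have strictly positive densities (so $\al X + \mu Z$ ranges over all of $\R$ with positive density), and because $\ell$ is strictly convex with $\elld(0)\neq 0$ — which ensures $\elld(\prox{\ell}{\cdot}{\la})$ is a strictly monotone, non-constant function that does not vanish at a point forced by the prox at $0$ — these degeneracy conditions cannot hold. The role of the hypothesis $\elld(0)\neq 0$ is exactly to rule out the case where the estimator is unbiased/the prox sits at the minimizer of $\ell$ on a fat set, which would collapse the $\mu$-direction of the Hessian.

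The cleanest route is probably to handle the three ``directions'' separately and then combine: (i) strict convexity in $\la$ for fixed $(\al,\mu)$ with $\al\neq 0$ or $\mu\neq 0$ — this follows because $\la\mapsto \env{\ell}{u}{\la}$ is strictly convex whenever $\elld(\prox{\ell}{u}{\la})$ varies, using $\partial_\la^2$; (ii) strict convexity in $(\al,\mu)$ for fixed $\la$ — here one pushes the strict convexity of $\env{\ell}{\cdot}{\la}$ (which holds wherever $\ell$ is not locally affine through the prox) through the affine map, using that $(X,Z)$ has full support so the image covers a set where strictness bites; (iii) cross-terms, handled by showing the full $3\times 3$ expected Hessian is positive definite, not merely positive semidefinite, by checking that no nonzero vector $(a,b,c)$ lies in its kernel — equivalently that $aX + bZ$ and $\elld(\prox{\ell}{\al X+\mu Z}{\la})$ cannot be affinely dependent on a positive-measure set unless $a=b=c=0$. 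I would reduce everything to this single kernel computation.

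\textbf{Main obstacle.} The crux — and where the hypotheses must be used with care — is step (iii): showing the expected Hessian has trivial kernel. The subtlety is that $\env{\ell}{\al x+\mu z}{\la}$ for an \emph{individual} realization need not be strictly convex (the paper explicitly flags this), so the argument genuinely relies on \emph{averaging} over the full-support distribution of $(X,Z)$ to recover strictness. Making this rigorous requires: (a) a careful interchange of expectation and second differentiation (another DCT/uniform-integrability argument along the lines of Proposition \ref{propo:der_EME}, now for second derivatives of the prox — using non-expansiveness/Lipschitzness of $\prox{\ell}{\cdot}{\la}$ and its a.e. differentiability); and (b) the precise use of strict convexity of $\ell$ plus $\elld(0)\neq 0$ to show the function $u\mapsto \elld(\prox{\ell}{u}{\la})$ is non-affine on every interval, so that it cannot be affinely matched against $u = \al X + \mu Z$ except trivially. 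I expect (a) to be routine but tedious, and (b) to be the genuinely delicate point requiring the $\elld(0)\neq 0$ assumption.
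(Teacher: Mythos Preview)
Your Hessian-based route is genuinely different from the paper's argument, and it has two real obstacles that the paper sidesteps.

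First, a concrete counterexample to your key step (b): take $\ell(u)=\tfrac12(u-1)^2$, which is strictly convex, $C^\infty$, and has $\ell'(0)=-1\neq 0$. Then $\prox{\ell}{u}{\la}=(u+\la)/(1+\la)$ and $\elld(\prox{\ell}{u}{\la})=(u-1)/(1+\la)$ is \emph{globally affine} in $u$. So the claim ``$u\mapsto\elld(\prox{\ell}{u}{\la})$ is non-affine on every interval'' is false under the stated hypotheses, and the kernel computation cannot be closed this way. (In this example the expected Hessian \emph{is} positive definite, but the reason is that the constant term in the affine relation --- which is exactly where $\elld(0)\neq 0$ enters --- forces the $\la$-coefficient to vanish; your outline does not identify this mechanism.) Second, the assumptions give only $\ell\in C^1$, so the Moreau envelope is guaranteed $C^1$ but not $C^2$; the second-derivative/DCT step you label ``routine'' is actually not available without additional smoothness.

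The paper avoids both issues with an elementary, Hessian-free argument. Given $\vb_1\neq\vb_2$, it evaluates $\env{\ell}{\cdot}{\cdot}$ at the midpoint by plugging the \emph{suboptimal} test point $v=\theta\,\proxri{1}{X}{Z}+\bar\theta\,\proxri{2}{X}{Z}$ into the variational definition \eqref{eq:M_def2}; the quadratic term is handled by joint convexity of $H$ (Lemma \ref{lem:H_cvx}), and the $\ell$-term by \emph{strict} convexity of $\ell$. This reduces strictness of $\Psi$ to showing $\proxri{1}{x}{z}\neq\proxri{2}{x}{z}$ on a set of positive probability. That is Lemma \ref{lem:prox_diff}, which in turn follows from an algebraic fact (Lemma \ref{lem:key_prox}): if the two proxes coincide for \emph{all} $(x,z)$, the first-order optimality conditions force an identity that, after case analysis on $\la_1=\la_2$ versus $\la_1\neq\la_2$, yields $\elld(0)=0$ --- contradicting the hypothesis. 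Continuity of the prox and full support of $(X,Z)$ then give the positive-measure set. This route uses only $C^1$ and the variational characterization, and it pinpoints exactly why $\elld(0)\neq 0$ is the right nondegeneracy condition.
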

\begin{proof}
Let $\vb_i=(\alpha_i,\mu_i,\la_i),~i=1,2$, $\theta\in(0,1)$ and $\thetao=1-\theta$. Further assume that $\vb_1\neq \vb_2$ and define the proximal operators
$$
\proxri{i}{X}{Z} := \prox{\ell}{\alpha_i X+ \mu_i Z}{\la_i} = \arg\min_{v}  \frac{1}{2\la_i}\left(\alpha_i X+ \mu_i Z - v\right)^2 + \ell(v),
$$
for $i=1,2$. Finally, denote $\la_\theta:=\theta\la_1+\thetao\la_2, \al_\theta:=\theta\alpha_1+\thetao\alpha_2$ and $\mu_\theta:=\theta\mu_1+\thetao\mu_2$. With this notation, 
\bea
&\Psi(\theta\vb_1+\thetao\vb_2)  \leq \E\left[\,  \frac{1}{2\la_\theta}\Big(\al_\theta X+ \mu_\theta Z - (\theta \proxri{1}{X}{Z} + \thetao \proxri{2}{X}{Z}) \Big)^2 + \ell\Big(\theta \proxri{1}{X}{Z} + \theta \proxri{2}{X}{Z}\Big)   \,\right]\nn \\[4pt]
&\qquad\qquad\qquad=
\E\Big[\, H\Big( \al_\theta X+ \mu_\theta Z, \theta \proxri{1}{X}{Z} + \theta \proxri{2}{X}{Z}, \la_\theta \Big) + \ell\Big(\theta \proxri{1}{X}{Z} + \thetao \proxri{2}{X}{Z}\Big)   \,\Big]\nn \\[4pt]
&\leq
\E\left[\, \theta H\Big(\al_1 X+ \mu_1 Z,  \proxri{1}{X}{Z}, \la_1\Big) + \thetao H\Big(\al_2 X+ \mu_2 Z,  \proxri{2}{X}{Z}, \la_2\Big)  + \ell\Big(\theta \proxri{1}{X}{Z} + \thetao \proxri{2}{X}{Z}\Big)   \,\right].\label{eq:EME_step1}
\eea
The first inequality above follows by the definition of the Moreau envelope in \eqref{eq:M_def2}. The equality in the second line uses the definition of the function $H:\R^3\rightarrow\R$ in \eqref{eq:H_def}. Finally, the last inequality follows from convexity of $H$ as proved in Lemma \ref{lem:H_cvx}.\\
Continuing from \eqref{eq:EME_step1}, we may use convexity of $\ell$ to find that 
\bea
&\Psi(\theta\vb_1+\thetao\vb_2) \leq \eqref{eq:EME_step1} \nn\\[4pt]
&\leq \E\Big[\, \theta H(\al_1 X+ \mu_1 Z, \la_1,  \proxri{1}{X}{Z}) + \thetao H(\al_2 X+ \mu_2 Z, \la_2,  \proxri{2}{X}{Z})  + \theta\ell(\proxri{1}{X}{Z}) + \thetao \ell(\proxri{2}{X}{Z}) \Big] \label{eq:EME_make_strict}\\[4pt]
&= \theta \Psi(\vb_1) + \thetao \Psi(\vb_2).\nn
\eea
This already proves convexity of \eqref{eq:EM_def}. In what follows, we will argue that the inequality in \eqref{eq:EME_make_strict} is in fact strict under the assumption of the lemma.

Specifically, in Lemma \ref{lem:prox_diff}, we prove that under the assumptions of the proposition, for $\vb_1\neq \vb_2$, it holds
$$
\E\Big[ \ell\left( \theta \proxri{1}{X}{Z} + \thetao \proxri{2}{X}{Z} \right) \Big] < \theta\, \E\Big[ \ell\left( \proxri{1}{X}{Z} \right) \Big] +  \thetao\, \E\Big[ \ell\left( \proxri{2}{X}{Z} \right) \Big].
$$
Using this in \eqref{eq:EME_step1} completes the proof of the proposition. The idea behind the proof of Lemma \ref{lem:prox_diff} is as follows. First, we use the fact that $\vb_1\neq\vb_2$ and $\ell'(0)\neq 0$ to argue that there exists a non-zero measure set of $(x,z)\in\R^2$ such that $\proxri{1}{x}{z}\neq\proxri{2}{x}{z}$. Then, the desired claim follows by \emph{strict} convexity of $\ell$.
\end{proof}

\begin{lem}\label{lem:prox_diff} Let $\ell:\R\rightarrow\R$ be  a proper strictly convex function that is continuously differentiable with $\ell '(0)\neq 0$. Further assume independent continuous random variables $X,Z$ with strictly positive densities. Fix arbitrary triplets $\vb_i=(\alpha_i,\mu_i,\la_i),~i=1,2$ such that $\vb_1\neq \vb_2$. Further denote
\bea\label{eq:prox_short}
\proxri{i}{X}{Z} := \prox{\ell}{\alpha_i X+ \mu_i Z}{\la_i},~~~i=1,2.
\eea
Then, there exists a ball $\Sc\subset\R^2$ of nonzero measure, i.e. $\Pro\left( (X,Z)\in\Sc \right)>0$, such that 
$\proxri{1}{x}{z}\neq \proxri{2}{x}{z}$, for all $(x,z)\in\Sc$. Consequently,  for any $\theta\in(0,1)$ and $\thetao=1-\theta$, the following strict inequality holds,
\bea\label{eq:ell_prox_strict}
\E\Big[ \ell\left( \theta \proxri{1}{X}{Z} + \thetao \proxri{2}{X}{Z} \right) \Big] < \theta\, \E\Big[ \ell\left( \proxri{1}{X}{Z} \right) \Big] +  \thetao\, \E\Big[ \ell\left( \proxri{2}{X}{Z} \right) \Big].
\eea
\end{lem}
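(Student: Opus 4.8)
The plan is to establish the two assertions of the lemma in order: first the existence of a ball $\Sc$ on which $\proxri{1}{\cdot}{\cdot}$ and $\proxri{2}{\cdot}{\cdot}$ differ, and then the strict inequality \eqref{eq:ell_prox_strict}, which will follow from the first part together with strict convexity of $\ell$. For the first part I would argue by contradiction, exploiting continuity of the proximal operator. By Proposition \ref{propo:der}(a) the map $\prox{\ell}{\cdot}{\la_i}$ is continuous, so each map $(x,z)\mapsto\proxri{i}{x}{z}=\prox{\ell}{\alpha_i x+\mu_i z}{\la_i}$ is continuous on $\R^2$ for $\la_i>0$ fixed. Hence the set $U:=\{(x,z):\proxri{1}{x}{z}\neq\proxri{2}{x}{z}\}$ is open; if it is nonempty it contains an open ball $\Sc$, and since $X,Z$ are independent with strictly positive densities, their joint density is strictly positive, so $\Pro((X,Z)\in\Sc)>0$. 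It therefore suffices to rule out $U=\emptyset$, i.e., $\proxri{1}{x}{z}=\proxri{2}{x}{z}=:p(x,z)$ for all $(x,z)\in\R^2$.

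Assume this holds. Since $\ell$ is differentiable, the first-order optimality condition for the proximal operator reads $\alpha_i x+\mu_i z-p(x,z)=\la_i\,\ell'(p(x,z))$ for $i=1,2$ and all $(x,z)$. Evaluating at the origin gives $-p(0,0)=\la_1\,\ell'(p(0,0))=\la_2\,\ell'(p(0,0))$; if $\ell'(p(0,0))=0$ then $p(0,0)=0$ and hence $\ell'(0)=0$, contradicting the hypothesis $\ell'(0)\neq0$, so $\ell'(p(0,0))\neq0$ and therefore $\la_1=\la_2$. Subtracting the two optimality conditions at a general $(x,z)$ then yields $(\alpha_1-\alpha_2)x+(\mu_1-\mu_2)z=0$ for all $(x,z)\in\R^2$, which forces $\alpha_1=\alpha_2$ and $\mu_1=\mu_2$; combined with $\la_1=\la_2$ this contradicts $\vb_1\neq\vb_2$, completing the proof of the first part.

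For the second part, set $D(x,z):=\theta\,\ell(\proxri{1}{x}{z})+\thetao\,\ell(\proxri{2}{x}{z})-\ell\big(\theta\proxri{1}{x}{z}+\thetao\proxri{2}{x}{z}\big)$. Convexity of $\ell$ gives $D\geq0$ everywhere, and strict convexity gives $D>0$ on $\Sc$; since $\Pro((X,Z)\in\Sc)>0$ it follows that $\E[D(X,Z)]>0$. A routine integrability check then lets one split $\E[D]$ into the three expectations: the upper bound uses $\ell(\proxri{i}{X}{Z})=\ell(\prox{\ell}{\alpha_iX+\mu_iZ}{\la_i})\leq\env{\ell}{\alpha_iX+\mu_iZ}{\la_i}\leq\frac{1}{2\la_i}(\alpha_iX+\mu_iZ)^2+\ell(0)$ together with finiteness of the second moments of $X,Z$ (as assumed in Proposition \ref{propo:der_EME}), while the lower bound uses that $\ell$ is minorized by an affine function combined with the non-expansiveness of $\prox{\ell}{\cdot}{\la_i}$, exactly as in the proof of Proposition \ref{propo:der_EME}. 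Rearranging $\E[D]>0$ yields \eqref{eq:ell_prox_strict}.

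The part requiring the most care is ensuring that the hypothesis $\ell'(0)\neq0$ is genuinely used: it is precisely what excludes the degenerate possibility $\la_1\neq\la_2$ (with $p$ still well-defined everywhere), via the evaluation of the optimality conditions at the origin. The other two points to watch are the appeal to continuity of $\prox{\ell}{\cdot}{\cdot}$ (to pass from ``$U$ contains no ball'' to ``$U=\emptyset$'') and the integrability bookkeeping needed to split $\E[D]$; both are routine given the cited properties of Moreau envelopes and proximal operators.
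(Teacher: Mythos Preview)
Your proposal is correct and follows essentially the same approach as the paper: reduce to finding a single point where the two proximal operators differ, use continuity of the prox to upgrade to a ball of positive probability, and then invoke strict convexity of $\ell$ for \eqref{eq:ell_prox_strict}. The paper packages the contradiction step as a separate Lemma~\ref{lem:key_prox} and argues by a two-case split on whether $\la_1=\la_2$; your variant---evaluate both optimality conditions at the origin to force $\la_1=\la_2$ first (via $\ell'(0)\neq0$), then subtract at general $(x,z)$---is a slightly more direct route to the same contradiction, and your added integrability remarks are a welcome bit of bookkeeping the paper leaves implicit.
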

\begin{proof}
Note that \eqref{eq:ell_prox_strict} holds trivially with ``$<"$ replaced by ``$\leq"$ due to the convexity of $\ell$. To prove that the inequality is strict, it suffices, by strict convexity of $\ell$, that there exists subset $\Sc\subset\R^2$ that satisfies the following two properties:
\begin{enumerate}
\item $\proxri{1}{x}{z}\neq \proxri{2}{x}{z}$, for all $(x,z)\in\Sc$.
\item $\Pro\left( (X,Z)\in\Sc \right)>0$.
\end{enumerate}
Consider the following function $f:\R^2\rightarrow\R$: 
\bea
f(x,z):= \proxri{1}{x}{z}- \proxri{2}{x}{z}.
\eea
By lemma \ref{lem:key_prox}, there exists $(x_0,z_0)$ such that 
\bea\label{eq:x0y0}
 f(x_0,z_0) \neq 0.
 \eea 
Moreover, by continuity of the proximal operator (cf. Proposition \ref{propo:der}(a)), it follows that $f$ is continuous. From this and \eqref{eq:x0y0}, we conclude that for sufficiently small $\zeta>0$ there exists a $\zeta$-ball $\Sc$ centered at $(x_0,z_0)$, such that property 1 holds. Property $2$ is also guaranteed to hold for $\Sc$, since both $X,Z$ have strictly positive densities and are independent.
\end{proof}

\begin{lem}\label{lem:key_prox}
Let $\ell:\R\rightarrow\R$ be  a proper, convex function. Further assume that $\ell:\R\rightarrow\R$ is continuously differentiable  and $\ell '(0)\neq 0$. Let $\al_1,\al_2>0$, $\la_1,\la_2>0$. Then, the following statement is true
\bea
(\alpha_1,\mu_1,\la_1) \neq (\alpha_2,\mu_2,\la_2) \quad\Longrightarrow\quad \exists (x,z)\in\R^2: \prox{\ell}{\alpha_1 x+ \mu_1 z}{\la_1} \neq \prox{\ell}{\alpha_2 x+ \mu_2 z}{\la_2}.
\eea
\end{lem}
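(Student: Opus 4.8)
The plan is to reduce the claim to the first-order optimality condition characterizing the proximal operator, and then to split into the two cases $\la_1=\la_2$ and $\la_1\neq\la_2$, exhibiting an explicit witness $(x,z)$ in each.

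First I would record the basic fact that, since $\ell$ is convex and continuously differentiable, for every $u\in\R$ and $\la>0$ the point $p=\prox{\ell}{u}{\la}$ is the unique solution of the stationarity equation $\tfrac1\la(p-u)+\ell'(p)=0$, equivalently $u=p+\la\,\ell'(p)$ (this is immediate from Proposition \ref{propo:der}(c)). Two consequences will be used: (a) for fixed $\la>0$ the map $u\mapsto\prox{\ell}{u}{\la}$ is injective, since two inputs yielding the same value $p$ must both equal $p+\la\,\ell'(p)$; and (b) any minimizer $p_0=\prox{\ell}{0}{\la}$ satisfies $p_0=-\la\,\ell'(p_0)$.

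Case $\la_1\neq\la_2$: I would take $(x,z)=(0,0)$ as the witness. Suppose, for contradiction, that $\prox{\ell}{0}{\la_1}=\prox{\ell}{0}{\la_2}=:p_0$. By (b), $p_0=-\la_1\ell'(p_0)=-\la_2\ell'(p_0)$, so $(\la_1-\la_2)\ell'(p_0)=0$, hence $\ell'(p_0)=0$, and then $p_0=0$ from either identity; but then $\ell'(0)=\ell'(p_0)=0$, contradicting $\ell'(0)\neq0$. Case $\la_1=\la_2=:\la$: then $(\al_1,\mu_1,\la_1)\neq(\al_2,\mu_2,\la_2)$ forces $(\al_1,\mu_1)\neq(\al_2,\mu_2)$, so the linear functional $(x,z)\mapsto(\al_1-\al_2)x+(\mu_1-\mu_2)z$ is not identically zero; choosing any $(x,z)$ on which it is nonzero (for instance $(x,z)=(\al_1-\al_2,\mu_1-\mu_2)$) gives $\al_1x+\mu_1z\neq\al_2x+\mu_2z$, and then injectivity (a) for the parameter $\la$ yields $\prox{\ell}{\al_1x+\mu_1z}{\la}\neq\prox{\ell}{\al_2x+\mu_2z}{\la}$. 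Since the two cases are exhaustive, this proves the lemma.

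I do not anticipate a genuine obstacle. The only step requiring a moment's care is the case $\la_1\neq\la_2$: one is tempted to analyze $\prox{\ell}{\al_1x+\mu_1z}{\la_1}-\prox{\ell}{\al_2x+\mu_2z}{\la_2}$ as a function of $(x,z)$, whereas in fact the single point $(0,0)$ already suffices, and the hypothesis $\ell'(0)\neq0$ is exactly what prevents the proxes of $0$ at two distinct smoothing levels from coinciding (if instead $\ell$ attained its minimum at $0$, i.e.\ $\ell'(0)=0$, then $\prox{\ell}{0}{\la}=0$ for every $\la$ and the conclusion would fail, which explains why this assumption is needed).
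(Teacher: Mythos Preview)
Your proof is correct and uses the same case split as the paper (namely $\la_1=\la_2$ versus $\la_1\neq\la_2$) together with the first-order optimality condition for the proximal operator. The paper argues by global contradiction: it assumes $\prox{\ell}{\alpha_1 x+\mu_1 z}{\la_1}=\prox{\ell}{\alpha_2 x+\mu_2 z}{\la_2}$ for \emph{all} $(x,z)$, derives the identity $(\la_2-\la_1)\prox{\ell}{\alpha_1 x+\mu_1 z}{\la_1}=(\la_2\al_1-\la_1\al_2)x+(\la_2\mu_1-\la_1\mu_2)z$, and then in the case $\la_1\neq\la_2$ substitutes back into the optimality condition and evaluates at $(x,z)=(0,0)$ to obtain $\ell'(0)=0$. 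Your argument is more direct: in the case $\la_1\neq\la_2$ you go straight to the witness $(x,z)=(0,0)$, and in the case $\la_1=\la_2$ you isolate the clean fact that $u\mapsto\prox{\ell}{u}{\la}$ is injective (an immediate consequence of $u=p+\la\ell'(p)$), which lets you pick any $(x,z)$ separating the two linear forms. Both routes rely on exactly the same ingredients; yours is a slightly leaner packaging of the same idea.
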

\begin{proof}
We prove the claim by contradiction, but first, let us set up some useful notation. Let $\vb\in\R^3$ denote triplets $(\al,\mu,\la)$ and further define
$$
\proxri{\al,\mu,\la}{x}{z} := \prox{\ell}{\alpha x+ \mu z}{\la},
$$
and 
$$
\ellprox{\al,\mu,\la}{x}{z} := \ellp\left(\prox{\ell}{\alpha x+ \mu z}{\la}\right).
$$
By Proposition \ref{propo:der}, the following is true:
\bea\label{eq:L_eqv}
\ellprox{\al,\mu,\la}{x}{z}= \frac{1}{\la}\left( \alpha x+ \mu z - \proxri{\al,\mu,\la}{x}{z} \right).
\eea
For the sake of contradiction, assume that the claim of the lemma is false. Then, 
\bea\label{eq:prox_eq}
\proxri{\al_1,\mu_1,\la_1}{x}{z} =  \proxri{\al_2,\mu_2,\la_2}{x}{z}, \quad\forall (x,z)\in\R^2.
\eea
From this, it also holds that
\bea\label{eq:der_eq}
\ellprox{\al_1,\mu_1,\la_1}{x}{z} =  \ellprox{\al_2,\mu_2,\la_2}{x}{z},\quad\forall (x,z)\in\R^2.
\eea
Recalling \eqref{eq:L_eqv} and applying \eqref{eq:prox_eq}, we derive the following from \eqref{eq:der_eq}:
\bea\label{eq:use1}
(\la_2-\la_1)\proxri{\al_1,\mu_1,\la_1}{x}{z} = (\la_2\al_1-\la_1\al_2) x + (\la_2\mu_1-\la_1\mu_2) z,\quad\forall(x,z)\in\R^2.
\eea

We consider the following two cases separately.

\vp
\noindent\underline{Case 1:~$\la_1=\la_2$}\,:  Since $\vb_1\neq\vb_2$, it holds that 
\bea\label{eq:case1}
\exists (x,z) \in \R^2 : \quad\al_1 x + \mu_1 z \neq \al_2 x + \mu_2z.
\eea
However, from \eqref{eq:use1} we have that $(\al_1-\al_2) x + (\mu_1-\mu_2) z=0$ for all $(x,z)\in\R^2$. 
This contradicts \eqref{eq:case1} and completes the proof for this case.
\\

\noindent\underline{Case 2:~$\la_1\neq\la_2$}\,: Continuing from \eqref{eq:use1} we can compute that for all $(x,z)\in\R^2$
\bea\label{eq:use2}
\ellp(\proxri{\al_1,\mu_1,\la_1}{x}{z}) &=  \frac{1}{\la_1}(\al_1x+\mu_1z-\proxri{\al_1,\mu_1,\la_1}{x}{z})\nn \\
&= \frac{\al_2-\al_1}{\la_2-\la_1} x + \frac{\mu_2-\mu_1}{\la_2-\la_1} z.
\eea
By replacing $\proxri{\al_1,\mu_1,\la_1}{x}{z}$ from \eqref{eq:use1} we derive that:
\bea\label{eq:use3}
\ellp( \eps_1 x + \eps_2 z) = \eps_3 x + \eps_4 z,\quad\forall (x,z)\in\R^2,
\eea
where 
\begin{align*}
\eps_1 &= \frac{\la_2\al_1-\la_1\al_2}{\la_2-\la_1},\qquad
\eps_2 = \frac{\la_2\mu_1-\la_1\mu_2}{\la_2-\la_1},\\
\eps_3 &= \frac{\al_2-\al_1}{\la_2-\la_1},\qquad\quad\quad
\eps_4 = \frac{\mu_2-\mu_1}{\la_2-\la_1}.
\end{align*}
By replacing $x=z=0$ in \eqref{eq:use3} we find that $\ell '(0) = 0$. This contradicts the assumption of the lemma and completes the proof.
\end{proof}

\subsubsection{Strict concavity}\label{sec:EME_conc}
In this section, we study the following variant $\Gamc:\R_+\rightarrow\R$ of the expected Moreau envelope:
\bea\label{eq:Gamc}
\Gamc(\gamma):=\E\left[\envl{X}{1/\gamma}\right],
\eea
for a lower semi-continuous, proper, convex function $\ell$ and continuous random variable $X$. The expectation above is over the randomness of $X$. In Section \ref{sec:B4} we show that the function $\Gamc$ is concave in $\gamma$. Here, we prove the following statement regarding \emph{strict}-concavity of $\Gamc$ :

\begin{center}
``\,If $\ell$ is convex, continuously differentiable, and $\elld(0)\neq 0$, then $\Gamc$ is \emph{strictly} concave.\,''
\end{center}

\noindent This is summarized in Proposition \ref{propo:strict_EME2} below.

\begin{propo}[Strict concavity]\label{propo:strict_EME2} Let $\ell:\R\rightarrow\R$ be a convex, continuously differentiable function for which $\elld(0)\neq 0$. Further let $X$ be a continuous random variable in $\R$ with strictly positive density in the real line. Then, the function $\Gamc$ in \eqref{eq:Gamc} is \emph{strictly} concave in $\R_+$.
\end{propo}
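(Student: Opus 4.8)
The plan is to mirror closely the structure of the strict-convexity argument in Proposition \ref{propo:strict_EME}, but now tracking concavity in the single variable $\gamma$. Fix $\gamma_1\neq\gamma_2$ in $\R_+$, let $\theta\in(0,1)$, $\thetao=1-\theta$, and write $\gamma_\theta=\theta\gamma_1+\thetao\gamma_2$. For each realization $x$ of $X$ let $p_i(x):=\prox{\ell}{x}{1/\gamma_i}$, $i=1,2$. Using the variational definition $\env{\ell}{x}{1/\gamma}=\min_v\,\frac{\gamma}{2}(x-v)^2+\ell(v)$ and plugging in the suboptimal candidate $v=\theta p_1(x)+\thetao p_2(x)$, one gets
\bea\nn
\env{\ell}{x}{1/\gamma_\theta} \le \frac{\gamma_\theta}{2}\big(x-\theta p_1(x)-\thetao p_2(x)\big)^2 + \ell\big(\theta p_1(x)+\thetao p_2(x)\big).
\eea
Writing $x=\theta x+\thetao x$, the quadratic term is $H(\gamma_\theta\,;\,\theta x+\thetao x\,,\,\theta p_1(x)+\thetao p_2(x))$ in the notation of Lemma \ref{lem:H_cvx} (here $H(x,v,\la)=\tfrac1{2\la}(x-v)^2$, so this is its reciprocal-$\gamma$ form); joint convexity of $H$ gives $H(\gamma_\theta,\cdot,\cdot)\le\theta H(\gamma_1,x,p_1(x))+\thetao H(\gamma_2,x,p_2(x))$, i.e. $\tfrac{\gamma_\theta}{2}(x-\theta p_1-\thetao p_2)^2\le\theta\tfrac{\gamma_1}{2}(x-p_1)^2+\thetao\tfrac{\gamma_2}{2}(x-p_2)^2$. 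Combining with convexity of $\ell$ on the last term and taking expectations over $X$ yields $\Gamc(\gamma_\theta)\le\theta\Gamc(\gamma_1)+\thetao\Gamc(\gamma_2)$, recovering concavity. Differentiability of $\Gamc$ (needed only if one wants the clean statement) follows from Proposition \ref{propo:der_EME} applied with the roles adjusted, or can be bypassed entirely.

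To upgrade to \emph{strict} concavity it suffices to show the inequality above is strict after taking expectations. By strict convexity is unavailable (we only assume $\ell$ convex here), so instead the slack must come from the quadratic term: I claim that the set of $x$ for which $p_1(x)\neq p_2(x)$ has positive probability, and on that set the convexity inequality for $H$ in the $v$-slot is strict whenever $\gamma_1\neq\gamma_2$. Concretely, the relevant one-dimensional fact is that $(\gamma,v)\mapsto\tfrac{\gamma}{2}(x-v)^2$ is strictly convex along any segment on which both $\gamma$ and $v$ genuinely move; since $\gamma_1\neq\gamma_2$ this always moves, so as long as additionally $x\neq\theta p_1(x)+\thetao p_2(x)$ does not force a degenerate alignment, we gain strictly. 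The cleanest route: use the identity $\env{\ell}{x}{1/\gamma}=\tfrac{\gamma}{2}(x-p_i(x))^2+\ell(p_i(x))$ and $\envdla{\ell}{x}{1/\gamma}$-type first-order relations, and observe that equality in the chain would force, for a.e. $x$, both $p_1(x)=p_2(x)$ \emph{and} the envelope to be affine in $\gamma$ at $x$ — the latter contradicts the strict concavity of $\gamma\mapsto\tfrac{\gamma}{2}(x-p(x))^2+\ell(p(x))$ in $\gamma$ (a positive quadratic-in-$\sqrt\gamma$... more carefully: $\env{\ell}{x}{1/\gamma}$ is the pointwise infimum over $v$ of affine-in-$\gamma$ functions $\gamma\mapsto\tfrac{\gamma}{2}(x-v)^2+\ell(v)$, hence concave in $\gamma$, and it is strictly concave at $x$ unless the minimizing $v$ is constant in $\gamma$ over a neighborhood).

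So the key technical step — and the main obstacle — is precisely Lemma \ref{lem:key_prox}-analogue: showing that $\{x:\prox{\ell}{x}{1/\gamma_1}\neq\prox{\ell}{x}{1/\gamma_2}\}$ has positive measure when $\gamma_1\neq\gamma_2$, using the hypothesis $\elld(0)\neq 0$. The argument should run by contradiction exactly as in Lemma \ref{lem:key_prox}: if $\prox{\ell}{x}{1/\gamma_1}=\prox{\ell}{x}{1/\gamma_2}=:p(x)$ for all $x$, then from $\elld(p(x))=\gamma_i(x-p(x))$ for $i=1,2$ and $\gamma_1\neq\gamma_2$ we force $x=p(x)$ for all $x$, whence $\elld(x)=\elld(p(x))=\gamma_i(x-p(x))=0$ for all $x$, contradicting $\elld(0)\neq 0$ (indeed contradicting $\ell$ being nonconstant). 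Since $X$ has strictly positive density, positive measure implies positive probability, and together with the strict-concavity-in-$\gamma$ of the envelope at each such $x$ we conclude $\Gamc(\gamma_\theta)<\theta\Gamc(\gamma_1)+\thetao\Gamc(\gamma_2)$. The one subtlety to handle carefully is combining "$p_1(x)\neq p_2(x)$ on a positive-measure set" with "the envelope-in-$\gamma$ is strictly concave there" into a single strict expectation inequality; this is routine once the pointwise strictness is localized to a ball of positive probability via continuity of $\prox{\ell}{\cdot}{1/\gamma}$ in both arguments (Proposition \ref{propo:der}(a)).
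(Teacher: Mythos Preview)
Your opening strategy of mirroring Proposition \ref{propo:strict_EME} does not work here, and in fact your chain of inequalities goes the wrong way. You write $\Gamc(\gamma_\theta)\le\theta\Gamc(\gamma_1)+\thetao\Gamc(\gamma_2)$ and call this ``recovering concavity,'' but concavity is the reverse inequality. More substantively, the step ``joint convexity of $H$ gives $\tfrac{\gamma_\theta}{2}(x-\theta p_1-\thetao p_2)^2\le\theta\tfrac{\gamma_1}{2}(x-p_1)^2+\thetao\tfrac{\gamma_2}{2}(x-p_2)^2$'' is false: Lemma \ref{lem:H_cvx} gives convexity of $(x,v,\la)\mapsto\tfrac{1}{2\la}(x-v)^2$, not of $(x,v,\gamma)\mapsto\tfrac{\gamma}{2}(x-v)^2$, and the latter is \emph{not} jointly convex (take $x=0$, $(p_1,\gamma_1)=(1,3)$, $(p_2,\gamma_2)=(3,1)$, $\theta=\tfrac12$: the left side is $4$, the right side is $3$). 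Plugging a suboptimal $v$ into the variational definition produces an \emph{upper} bound on the envelope, which is the convexity direction; it cannot deliver the lower bound you need.

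The correct mechanism is the one you mention only parenthetically later: $\gamma\mapsto\env{\ell}{x}{1/\gamma}=\min_v\big[\tfrac{\gamma}{2}(x-v)^2+\ell(v)\big]$ is a pointwise infimum of affine-in-$\gamma$ functions, hence concave, and strictly so on any interval over which the minimizer $p^x_\gamma$ varies. Your contradiction argument for the key lemma---if $p^x_{\gamma_1}=p^x_{\gamma_2}$ for all $x$, then the first-order conditions $\elld(p^x_{\gamma_i})=\gamma_i(x-p^x_{\gamma_i})$ with $\gamma_1\neq\gamma_2$ force $p^x=x$ and hence $\elld\equiv 0$---is correct and is a clean specialization of Lemma \ref{lem:key_prox}. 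What still needs a line is turning ``$p^x_{\gamma_1}\neq p^x_{\gamma_2}$ on a positive-probability set'' into strict pointwise concavity there: since $\gamma\mapsto\env{\ell}{x}{1/\gamma}$ is $C^1$ with derivative $\tfrac12(x-p^x_\gamma)^2$, affinity on $[\gamma_1,\gamma_2]$ would force $(x-p^x_\gamma)^2$ constant in $\gamma$, hence by continuity of $p^x_\gamma$ (Proposition \ref{propo:der}(a)) $p^x_{\gamma_1}=p^x_{\gamma_2}$.

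For comparison, the paper proceeds via the derivative rather than the secant: it shows $\Gamc'(\gamma)=\tfrac12\E[(X-p^X_\gamma)^2]$ is non-increasing by proving in Lemma \ref{lem:prox_der}, through a strong-convexity estimate on $v\mapsto\tfrac{\gamma}{2}(x-v)^2+\ell(v)$, the quantitative bound $(x-p^x_{\gamma_2})^2-(x-p^x_{\gamma_1})^2\le-\tfrac{\gamma_1}{\gamma_2-\gamma_1}(p^x_{\gamma_1}-p^x_{\gamma_2})^2$ for $\gamma_2>\gamma_1$. This makes the monotonicity strict exactly where $p^x_{\gamma_1}\neq p^x_{\gamma_2}$, and then invokes Lemma \ref{lem:prox_diff} (your key lemma) to locate such $x$ with positive probability. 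Both routes hinge on that lemma, which you handled correctly; the gap is only in the concavity skeleton you built around it.
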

\begin{proof}
Before everything, we introduce the following convenient notation:
$$\widetilde{\Gamc}_x(\gamma) : = \envl{x}{1/\gamma} \quad\text{and}\quad  p^x_\gamma:=\prox{\ell}{x}{1/\gamma}.$$
 
\noindent Note from Proposition \ref{propo:der} that $\widetilde{\Gamc}_x$ is differentiable with derivative
\bea\label{eq:Gam_der_exp}
\widetilde{\Gamc}'_x(\gamma) = \frac{1}{2} \Big(x-\prox{\ell}{x}{1/\gamma}\Big)^2.
\eea
We proceed in two steps as follows. 
First, for fixed $x\in\R$ and $\gammatwo>\gammaone$ we prove in Lemma \ref{lem:prox_der} below that 
\bea\label{eq:Gam_der}
(x-p^x_{\gammatwo})^2 - (x-p^x_{\gammaone})^2 \leq - \frac{\gammaone}{\gammatwo-\gammaone}(p^x_\gammaone-p^x_\gammatwo)^2,
\eea
 This shows that for all $x \in \R$
\begin{align}\label{eq:gamctilde}
\widetilde\Gamc_x'(\gammatwo) - \widetilde\Gamc_x'(\gammaone)\leq 0.
\end{align}

Second, we use Lemma \ref{lem:prox_diff}  to argue that the inequality is in fact strict for all $x\in\Sc$ where $\Sc\subset\R$ and  $\Pro(X\in\Sc)>0$. To be concrete, apply Lemma \ref{lem:prox_diff} for $\vb_i=(1,0,1/\gamma_i),~i=1,2$. Notice that all the assumptions of the lemma are satisfied, hence there exists interval $\Sc\subset\R$ for which $\Pro(X\in\Sc)>0$ and 
$$p^x_\gammaone\neq p^x_\gammatwo \Rightarrow (p^x_\gammaone- p^x_\gammatwo)^2>0,\quad\forall x\in\Sc.$$ Hence, from  \eqref{eq:Gam_der} it follows that 
$$(x-p^x_{\gammatwo})^2 - (x-p^x_{\gammaone})^2 < 0,\quad\forall x\in\Sc.$$
From this, and \eqref{eq:Gam_der_exp} we conclude that 
\begin{align}\label{eq:gamctildest}
\widetilde\Gamc_x'(\gammatwo) - \widetilde\Gamc_x'(\gammaone) < 0,\quad\forall x\in\Sc.
\end{align}
Thus from \eqref{eq:gamctilde} and \eqref{eq:gamctildest}, as well as the facts that $\Gamc(\gamma) = \E \left[\widetilde{\Gamc}_X(\gamma)\right]$ and $\Pro(X\in\Sc)>0$, we conclude that $\Gamc$ is strictly concave in $\R_+$.

\end{proof}

\begin{lem}\label{lem:prox_der}
 Let $\ell:\R\rightarrow\R$ be a convex, continuously differentiable function. Fix $x\in\R$ and denote 
$p_\gamma:=\prox{\ell}{x}{1/\gamma}$. Then, for any $\gamma,\gammat>0$, it holds that
\bea\label{eq:p_gamma}
(\gammat-\gamma)(p_\gammat-p_\gamma)(p_\gamma-x) + {\gammat}(p_\gammat-p_\gamma)^2 \leq 0.
\eea
Moreover, for $\gammatwo>\gammaone$, the following statement is true : 
\bea\label{eq:Gam_der_lem}
(x-p_{\gammatwo})^2 - (x-p_{\gammaone})^2 \leq - \frac{\gammaone}{\gammatwo-\gammaone}(p_\gammaone-p_\gammatwo)^2.
\eea
\end{lem}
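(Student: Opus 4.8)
The plan is to derive both displayed inequalities from the first-order characterization of the proximal operator, combined with monotonicity of $\elld$. Throughout, fix $x\in\R$ and write $p_\gamma := \prox{\ell}{x}{1/\gamma}$. First I would record the stationarity condition: since $\ell$ is convex and continuously differentiable, Proposition \ref{propo:der}(c) gives, with $\la=1/\gamma$, both $\envdx{\ell}{x}{1/\gamma}=\gamma(x-p_\gamma)$ (from \eqref{eq:envdx}) and $\envdx{\ell}{x}{1/\gamma}=\elld(p_\gamma)$ (from \eqref{eq:envdxp}), hence
$$\elld(p_\gamma) = \gamma(x-p_\gamma), \qquad \elld(p_{\gammat}) = \gammat(x-p_{\gammat}).$$
Next, convexity of $\ell$ makes $\elld$ nondecreasing, so $\big(\elld(p_{\gammat})-\elld(p_\gamma)\big)\big(p_{\gammat}-p_\gamma\big)\ge 0$. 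Abbreviating $a:=p_\gamma-x$ and $b:=p_{\gammat}-x$ (so that $p_{\gammat}-p_\gamma=b-a$) and substituting the stationarity identities, this becomes $(\gamma a-\gammat b)(b-a)\ge 0$, equivalently
$$\gammat\, b^2 + \gamma\, a^2 - (\gammat+\gamma)\,ab \le 0.$$

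To obtain \eqref{eq:p_gamma}, I would then verify by a one-line expansion that the left-hand side of \eqref{eq:p_gamma} is identically this same quantity:
$$(\gammat-\gamma)(p_{\gammat}-p_\gamma)(p_\gamma-x) + \gammat(p_{\gammat}-p_\gamma)^2 \;=\; (\gammat-\gamma)(b-a)a + \gammat(b-a)^2 \;=\; \gammat b^2 + \gamma a^2 - (\gammat+\gamma)ab \;\le\; 0,$$
which establishes the first claim for all $\gamma,\gammat>0$.

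For \eqref{eq:Gam_der_lem} I would specialize \eqref{eq:p_gamma} to $(\gamma,\gammat)=(\gammaone,\gammatwo)$ with $\gammatwo>\gammaone$. Writing $u:=p_{\gammaone}-x$ and $w:=p_{\gammatwo}-p_{\gammaone}$ (so $p_{\gammatwo}-x=u+w$), \eqref{eq:p_gamma} reads $(\gammatwo-\gammaone)wu+\gammatwo w^2\le 0$, hence $wu\le -\frac{\gammatwo}{\gammatwo-\gammaone}w^2$ since $\gammatwo-\gammaone>0$. Then
$$(x-p_{\gammatwo})^2-(x-p_{\gammaone})^2 \;=\; (u+w)^2-u^2 \;=\; 2uw+w^2 \;\le\; \Big(1-\frac{2\gammatwo}{\gammatwo-\gammaone}\Big)w^2 \;=\; -\frac{\gammaone+\gammatwo}{\gammatwo-\gammaone}w^2 \;\le\; -\frac{\gammaone}{\gammatwo-\gammaone}w^2,$$
the last step using $w^2\ge 0$ and $\gammatwo>0$; since $w^2=(p_{\gammaone}-p_{\gammatwo})^2$, this is exactly \eqref{eq:Gam_der_lem}.

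There is no genuine obstacle in this argument; the only points requiring care are fixing the correct sign in the stationarity condition and checking the short algebraic identity for the left-hand side of \eqref{eq:p_gamma}. It is worth noting that the computation actually produces the sharper constant $\frac{\gammaone+\gammatwo}{\gammatwo-\gammaone}$ in \eqref{eq:Gam_der_lem}, but the weaker stated form is all that is needed for the application in Proposition \ref{propo:strict_EME2}.
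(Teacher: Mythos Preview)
Your proof is correct. Both parts follow cleanly from the stationarity identity $\elld(p_\gamma)=\gamma(x-p_\gamma)$ together with monotonicity of $\elld$, and the algebraic checks are sound.

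The paper takes a slightly different route. For \eqref{eq:p_gamma} it introduces the auxiliary function $g(p)=\frac{\gammat}{2}(x-p)^2+\ell(p)$ and applies $\gammat$-strong convexity of $g$ twice (once around its minimizer $p_\gammat$, once at $p_\gamma$ using $g'(p_\gamma)=(\gammat-\gamma)(p_\gamma-x)$), then chains the two inequalities. Your direct use of monotonicity of $\elld$ is more economical and avoids the auxiliary function entirely. For \eqref{eq:Gam_der_lem}, the paper applies \eqref{eq:p_gamma} twice --- once with $(\gammat,\gamma)=(\gammatwo,\gammaone)$ and once with the roles swapped --- and adds the resulting inequalities; you obtain the same conclusion from a single application of \eqref{eq:p_gamma} followed by expanding $(u+w)^2-u^2$. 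Both approaches in fact yield the sharper constant $-\frac{\gammaone+\gammatwo}{\gammatwo-\gammaone}$, which you rightly note is stronger than what is stated but unnecessary for the downstream use in Proposition \ref{propo:strict_EME2}.
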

\begin{proof}First, we prove \eqref{eq:p_gamma}. Then, we use it to prove \eqref{eq:Gam_der_lem}.

\vp
\noindent\underline{Proof of \eqref{eq:p_gamma}:} 
Consider function $g:\R\rightarrow\R$ defined as follows $g(p)=\frac{\gammat}{2}(x-p)^2+\ell(p)$. By assumption, $g$ is differentiable with derivative
$g^\prime(p)=\gammat (p-x) + \elld(p)$. Moreover, $g$ is $\gamma_2$-strongly convex. Finally, by optimality of the proximal operator (cf. Proposition \ref{propo:der}), it holds that $\gamma(x-p_\gamma) = \elld(p_\gamma)$ and $\gammat(x-p_\gammat) = \elld(p_\gammat)$. Using these, it can be computed that $g^\prime(p_\gammat)=0$ and $g^\prime(p_\gamma) = (\gammat-\gamma)(p_\gamma-x)$. 

In the following inequalities, we combine all the aforementioned properties of the function $g$ to find that
\bea
g(p_\gamma) &\geq g(p_\gammat) + \frac{\gammat}{2}(p_\gamma-p_\gammat)^2 \geq g(p_\gamma) + (\gammat-\gamma)(p_\gamma-x)(p_\gammat-p_\gamma) + {\gammat}(p_\gamma-p_\gammat)^2.\nn
\eea
This leads to the desired statement and completes the proof of \eqref{eq:p_gamma}.

\vp
\noindent\underline{Proof of \eqref{eq:Gam_der_lem}:} 
We fix $\gammatwo > \gammaone$ and apply \eqref{eq:p_gamma} two times as follows. First, applying \eqref{eq:p_gamma} for $(\gammat,\gamma)=(\gammatwo,\gammaone)$ and using the fact that $\gammatwo>\gammaone$ we find that 
\bea\label{eq:concave_1}
(p_\gammatwo-p_\gammaone)(p_\gammaone-x) \leq - \frac{\gammatwo}{\gammatwo-\gammaone}(p_\gammatwo-p_\gammaone)^2.
\eea
Second, applying \eqref{eq:p_gamma} for $(\gammat,\gamma)=(\gammaone,\gammatwo)$ and using again the fact that $\gammatwo>\gammaone$ we find that 
\bea
&(\gammaone-\gammatwo)(p_\gammaone-p_\gammatwo)(p_\gammatwo-x) + {\gammaone}(p_\gammaone-p_\gammatwo)^2 \leq 0\nn \\[4pt]
\Rightarrow\qquad&(p_\gammatwo-p_\gammaone)(p_\gammatwo-x) \leq - \frac{\gammaone}{\gammatwo-\gammaone}(p_\gammaone-p_\gammatwo)^2. \label{eq:concave_2}
\eea
Adding \eqref{eq:concave_1} and \eqref{eq:concave_2}, we have shown the desired property as follows: 
$$
(p_\gammatwo-p_\gammaone)(p_\gammatwo-x) + (p_\gammatwo-p_\gammaone)(p_\gammaone-x)  \leq - \frac{\gamma_2+\gamma_1}{\gamma_2-\gamma_1} (p_\gammaone-p_\gammatwo)^2.
$$

\end{proof}

\subsubsection{Summary of properties of \eqref{eq:EME_YS}}\label{sec:EME_Gauss}

\begin{propo}\label{propo:EME_sum}
Let $\ell:\R\rightarrow\R$ be a lsc, proper, convex function. Let $G,S\simiid\Nn(0,1)$ and function $f:\R\rightarrow\{\pm1\}$ such that the random variable $YS=f(S)S$ has a continuous strictly positive density on the real line. Then the following properties are true for the expected Moreau envelope function
\bea\label{eq:Omega}
\Omega:~(\alpha>0,\mu,\tau>0,\gamma>0) \mapsto \E\big[\env{\ell}{\alpha G + \mu SY}{\tau/\gamma}\big] : 
\eea

\noindent~(a)~ The function $\Omega$ is differentiable and its derivatives are given as follows:
\begin{align*}
\frac{\partial}{\partial \alpha} \Omega(\alpha,\mu,\tau,\gamma) &= \E\Big[G\,\envdx{\ell}{\alpha G+\mu SY}{\tau/\gamma}\Big],  \\
\frac{\partial}{\partial \mu} \Omega(\alpha,\mu,\tau,\gamma) &= \E\Big[SY\,\envdx{\ell}{\alpha G+\mu SY}{\tau/\gamma}\Big],\\
\frac{\partial}{\partial \tau} \Omega(\alpha,\mu,\tau,\gamma) &= \frac{1}{\gamma}\E\Big[\envdla{\ell}{\alpha G+\mu SY}{\tau/\gamma}\Big],\\
\frac{\partial}{\partial \gamma} \Omega(\alpha,\mu,\tau,\gamma) &= -\frac{\tau}{\gamma^2}\E\Big[\envdla{\ell}{\alpha G+\mu SY}{\tau/\gamma}\Big].
\end{align*}

\noindent~(b)~ The function $\Omega$ is jointly convex $(\alpha,\mu,\tau)$ and concave on $\gamma$.

\noindent~(c)~  The function $\Omega$ is increasing in $\alpha$.

\noindent For the statements below, further assume that $\ell$ is strictly convex and continuously differentiable with $\ellp(0)\neq 0$.

\noindent~(d)~  The function $\Omega$ is strictly convex in $(\alpha,\mu,\tau)$ and strictly concave in $\la$.

\noindent~(e)~  The function $\Omega$ is strictly increasing in $\alpha$.

\end{propo}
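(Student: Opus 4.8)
The plan is to obtain the six claims by specializing, one at a time, the general Moreau-envelope facts already established in this appendix to the random variables $X=G$, $Z=SY$, with the envelope parameter written as $\la=\tau/\gamma$. Two structural facts will be used throughout: $G$ is independent of $SY$ (since $Y=f(S)$ depends only on $S$ and possibly auxiliary randomness, all independent of $G$), and $\E[G^2]=\E[(SY)^2]=1<\infty$. For (a) I would apply Proposition~\ref{propo:der_EME} twice: with $(c,X,Z)=(\alpha,G,SY)$ to get $\partial_\alpha\Omega=\E[G\,\envdx{\ell}{\alpha G+\mu SY}{\tau/\gamma}]$, and with $(c,X,Z)=(\mu,SY,G)$ to get $\partial_\mu\Omega=\E[SY\,\envdx{\ell}{\alpha G+\mu SY}{\tau/\gamma}]$; its hypotheses hold by the independence and moment bounds just noted. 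For the $\tau$- and $\gamma$-derivatives I would combine the $\la$-derivative formula of Proposition~\ref{propo:der_EME} with the chain rule applied to $\la=\tau/\gamma$ (legitimate because $\la\mapsto\E[\envdla{\ell}{\alpha G+\mu SY}{\la}]$ is continuous), which produces exactly the prefactors $1/\gamma$ and $-\tau/\gamma^2$.

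For (b) and (d) I would argue at a fixed value of the ``other'' variable. Joint convexity in $(\alpha,\mu,\tau)$ holds because $(\alpha,\mu,\tau)\mapsto(\alpha G+\mu SY,\,\tau/\gamma)$ is affine, so its composition with $\env{\ell}{\cdot}{\cdot}$ is convex by Lemma~\ref{lem:Moreau_cvx}, and taking $\E$ preserves convexity; under the stronger hypotheses, Proposition~\ref{propo:strict_EME} with $X=G$, $Z=SY$ (which have strictly positive densities, $G$ by Gaussianity and $SY$ by assumption) gives strict convexity in $(\alpha,\mu,\la)$, hence in $(\alpha,\mu,\tau)$ since $\la=\tau/\gamma$ is a linear bijection for fixed $\gamma$. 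Concavity in $\gamma$ I would read off from $\env{\ell}{x}{\tau/\gamma}=\inf_v\big[\frac{\gamma}{2\tau}(x-v)^2+\ell(v)\big]$, an infimum of functions affine in $\gamma$, hence concave by Proposition~\ref{lem:min_cvx}(b) (finiteness is covered by Proposition~\ref{propo:der}(b)), and $\E$ preserves concavity; under the stronger hypotheses, Proposition~\ref{propo:strict_EME2} applied to $X=\alpha G+\mu SY$ — whose law, for $\alpha>0$, is a Gaussian convolution and therefore has a continuous, strictly positive density — gives strict concavity of $\gamma'\mapsto\E[\env{\ell}{X}{1/\gamma'}]$, hence strict concavity in $\gamma$ since $\gamma'=\gamma/\tau$ is a linear bijection for fixed $\tau>0$.

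For (c) and (e) I would start from $\partial_\alpha\Omega=\E[G\,\envdx{\ell}{\alpha G+\mu SY}{\tau/\gamma}]$ and condition on $SY$: for $\alpha>0$ the maps $g\mapsto g$ and $g\mapsto\envdx{\ell}{\alpha g+\mu SY}{\tau/\gamma}$ are both nondecreasing (the latter because $\envdx{\ell}{\cdot}{\la}$ is the derivative of the convex function $\env{\ell}{\cdot}{\la}$), so the correlation (Chebyshev/Harris) inequality gives a nonnegative conditional covariance, whence $\E[G\,\envdx{\ell}{\alpha G+\mu SY}{\tau/\gamma}\mid SY]\ge 0$ (using $\E[G\mid SY]=0$); averaging over $SY$ yields $\partial_\alpha\Omega\ge0$, and continuity at $\alpha=0$ extends monotonicity to all $\alpha\ge0$. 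For the strict statement I would note that, under strict convexity and differentiability of $\ell$, the identity $x=p+\la\,\ell'(p)$ for $p=\prox{\ell}{x}{\la}$ makes $\prox{\ell}{\cdot}{\la}$ strictly increasing, hence $\envdx{\ell}{\cdot}{\la}=\ell'(\prox{\ell}{\cdot}{\la})$ strictly increasing; then for $\alpha>0$ both functions of $g$ above are strictly increasing and not almost surely constant, so the strict form of the correlation inequality gives $\partial_\alpha\Omega>0$, which together with the plain monotonicity gives strict monotonicity on $\alpha\ge0$.

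The main obstacle is that (c) and (e) are the only points not handed to us by the earlier propositions: they require exploiting the Gaussianity of $G$ — via a correlation inequality, or equivalently a Stein/integration-by-parts identity — together with the (strict) monotonicity of $x\mapsto\envdx{\ell}{x}{\la}$, and in the strict case one must additionally verify that the relevant functions of $G$ are genuinely non-constant, which is precisely what the strict convexity and $\ell'(0)\neq0$ assumptions guarantee (they make $\envdx{\ell}{\cdot}{\la}$ strictly monotone). Everything else is bookkeeping on top of Proposition~\ref{propo:der_EME}, Lemma~\ref{lem:Moreau_cvx}, Proposition~\ref{lem:min_cvx}, and Propositions~\ref{propo:strict_EME}--\ref{propo:strict_EME2}.
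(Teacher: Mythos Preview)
Your handling of (a), (b), and (d) is essentially identical to the paper's: those parts are dispatched by direct reference to Proposition~\ref{propo:der_EME}, Lemma~\ref{lem:Moreau_cvx}/Proposition~\ref{lem:min_cvx}, and Propositions~\ref{propo:strict_EME}--\ref{propo:strict_EME2}, just as you do.

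Where you diverge is in (c) and (e). The paper does \emph{not} argue through the derivative and a correlation inequality. Instead it exploits Gaussian stability: for $\alpha_2>\alpha_1>0$ one writes $\alpha_2 G \stackrel{d}{=}\alpha_1 G' + \tilde\alpha G''$ with $G',G''$ independent standard Gaussians, and then applies Jensen's inequality over $G''$ (using convexity of $x\mapsto\env{\ell}{x}{\la}$ and $\E[G'']=0$) to get $\Omega(\alpha_2,\cdot)\ge\Omega(\alpha_1,\cdot)$ immediately; strictness for (e) then follows from the strict convexity established in (d). Your route---conditioning on $SY$, observing that $g\mapsto g$ and $g\mapsto\envdx{\ell}{\alpha g+\mu SY}{\la}$ are comonotone, and invoking Harris/Chebyshev---is correct and arguably more portable: it needs only $\E[G]=0$ and independence of $G$ from $SY$, not Gaussianity per se, whereas the paper's decomposition trick is specific to stable laws. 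On the other hand, the paper's argument is shorter and recycles (b)/(d) so that nothing new has to be checked. One minor comment: your appeal to $\ell'(0)\neq 0$ in justifying strict monotonicity of $\envdx{\ell}{\cdot}{\la}$ is unnecessary---strict convexity of $\ell$ alone already forces $\prox{\ell}{\cdot}{\la}$, and hence $\envdx{\ell}{\cdot}{\la}=\ell'\circ\prox{\ell}{\cdot}{\la}$, to be strictly increasing. The $\ell'(0)\neq 0$ hypothesis is only needed for (d), via Proposition~\ref{propo:strict_EME}.
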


\begin{proof}
The statements (a),(b) and (d) follow directly by Propositions \ref{propo:der_EME},  \ref{propo:strict_EME} and  \ref{propo:strict_EME2}. It remains to prove statements (c) and (e). Let $\alpha_2>\alpha_1$. Then, there exist independent copies $G^\prime,G^{\prime\prime}$ of $G$ and $\tilde\alpha>0$ such that 
$\alpha_2G = \alpha_1G^{\prime} + \tilde\alpha G^{\prime\prime}$.
Hence, we have the following chain of inequalities:
\begin{align*}
 \Omega(\alpha_2,\mu,\tau,\gamma) &= \E\big[\env{\ell}{\alpha_1G^{\prime} + \tilde\alpha G^{\prime\prime} + \mu SY}{\tau/\gamma}\big]\nn \geq \E\big[\env{\ell}{\alpha_1G^{\prime} + \tilde\alpha \E[G^{\prime\prime}] + \mu SY}{\tau/\gamma}\big] \\
 &= \E\big[\env{\ell}{\alpha_1G^{\prime}  + \mu SY}{\tau/\gamma}\big] =  \Omega(\alpha_1,\mu,\tau,\gamma),
\end{align*}
where the inequality follows from Jensen and convexity of $\Omega$ with respect to $\alpha$ (see Statement (b) of the Proposition). This proves Statement (c). For Statement (e), note that the inequality is strict provided that $\Omega$ is strictly convex (see Statement (d) of the Proposition).
\end{proof}


\section{Proof of Theorem \ref{thm:main}}\label{sec:proof}
In this section we provide a proof sketch of Theorem \ref{thm:main}. The main technical tool that facilitates our analysis is the convex Gaussian min-max theorem (CGMT), which is an extension of Gordon's Gaussian min-max inequality (GMT). We introduce the necessary background on the CGMT in \ref{sec:CGMT}.

The CGMT has been mostly applied to linear measurements \cite{StoLASSO,OTH13,COLT,Master,miolane2018distribution}. The simple, yet central idea, which allows for this extension, is a certain projection trick inspired by \cite{PV15}. Here, we apply a similar trick, but in our setting, we recognize that it suffices to simply rotate $\x_0$ to align with the first basis vector. The simple rotation decouples the measurements $y_i$ from the last $n-1$ coordinates of the measurement vectors $\ab_i$ (see Section \ref{sec:mainproof}). While this is sufficient for LS in \cite{NIPS}, in order to study more general loss functions, we further need to combine this with a duality argument similar to that in \cite{COLT}. Second, while the steps that bring the ERM minimization to the form of a PO (see \eqref{eq:normopt_PO}) bear the aforementioned similarities to \cite{NIPS,COLT}, the resulting AO is different from the one studied in previous works. Hence, the mathematical derivations in Sections \ref{sec:B3} and \ref{sec:B4} are different. This also leads to a different system of equations characterizing the statistical behavior of ERM. Finally, in Section \ref{sec:unique}, we prove uniqueness of the solution of this system of equations using the properties of the expected Moreau envelope function studied in Section \ref{sec:strict_EME}.

\subsection{Technical tool: CGMT}\label{sec:CGMT}

\subsubsection{Gordon's Min-Max Theorem (GMT)}
The Gordon's Gaussian comparison inequality \cite{Gor88} compares the min-max value of two doubly indexed Gaussian processes based on how their autocorrelation functions compare. The inequality is quite general (see \cite{Gor88}), but for our purposes we only need its application to the following two Gaussian processes:
\begin{subequations}
\begin{align}
X_{\w,\ub} &:= \ub^T \G \w + \psi(\w,\ub),\\
Y_{\w,\ub} &:= \norm{\w}_2 \g^T \ub + \norm{\ub}_2 \h^T \w + \psi(\w,\ub),
\end{align}
\end{subequations}
where: $\G\in\mathbb{R}^{m\times n}$, $\g \in \mathbb{R}^m$, $\h\in\mathbb{R}^n$, they all have entries iid Gaussian; the sets $\mathcal{S}_{\w}\subset\R^n$ and $\mathcal{S}_{\ub}\subset\R^m$ are compact; and, $\psi: \mathbb{R}^n\times \mathbb{R}^m \to \mathbb{R}$. For these two processes, define the following (random) min-max optimization programs, which we refer to as the \emph{primary optimization} (PO) problem and the \emph{auxiliary optimization} (AO).  
\begin{subequations}
\begin{align}\label{eq:PO_loc}
\widetilde{\Phi}(\G)&=\min\limits_{\w \in \mathcal{S}_{\w}} \max\limits_{\ub\in\mathcal{S}_{\ub}} X_{\w,\ub},\\
\label{eq:AO_loc}
\phi(\g,\h)&=\min\limits_{\w \in \mathcal{S}_{\w}} \max\limits_{\ub\in\mathcal{S}_{\ub}} Y_{\w,\ub}.
\end{align}
\end{subequations}
According to Gordon's comparison inequality, for any $c\in\R$, it holds:
\begin{equation}\label{eq:gmt}
\mathbb{P}\left( \widetilde{\Phi}(\G) < c\right) \leq 2 \mathbb{P}\left(  \phi(\g,\h) < c \right).
\end{equation}
In other words, a high-probability lower bound on the AO is a high-probability lower bound on the PO. The premise is that it is often much simpler to lower bound the AO rather than the PO. To be precise, \eqref{eq:gmt} is a slight reformulation of Gordon's original result proved in \cite{COLT}.
\subsubsection{Convex Gaussian Min-Max Theorem (CGMT)}
The proof of Theorem \ref{thm:main} builds on the CGMT \cite{COLT}. 
For ease of reference we summarize here the essential ideas of the framework following the presentation in \cite{Master}; please see \cite[Section~6]{Master} for the formal statement of the theorem and further details.
The CGMT is an extension of the GMT and it asserts that the AO in \eqref{eq:AO_loc} can be used to tightly infer properties of the original (PO) in \eqref{eq:PO_loc}, including the optimal cost and the optimal solution.
According to the CGMT \cite[Theorem 6.1]{Master}, if the sets $\mathcal{S}_{\w}$ and $\mathcal{S}_{\ub}$ are convex and $\psi$ is continuous \emph{convex-concave} on $\mathcal{S}_{\w}\times \mathcal{S}_{\ub}$, then, for any $\nu \in \mathbb{R}$ and $t>0$, it holds
\begin{equation}\label{eq:cgmt}
\mathbb{P}\left(\, \abs{\widetilde{\Phi}(\G)-\nu} > t\right) \leq 2 \mathbb{P}\Big(\,\abs{\phi(\g,\h)-\nu} > t \Big).
\end{equation}
In words, concentration of the optimal cost of the AO problem around $\mu$ implies concentration of the optimal cost of the corresponding PO problem around the same value $\mu$.  Moreover, starting from \eqref{eq:cgmt} and under strict convexity conditions, the CGMT shows that concentration of the optimal solution of the AO problem implies concentration of the optimal solution of the PO to the same value. For example, if minimizers of \eqref{eq:AO_loc} satisfy $\norm{\w^\ast(\g,\h)}_2 \to \zeta^\ast$ for some $\zeta^\ast>0$, then, the same holds true for the minimizers of \eqref{eq:PO_loc}: $\norm{\w^\ast(\G)}_2 \to \zeta^\ast$ \cite[Theorem 6.1(iii)]{Master}. Thus, one can analyze the AO to infer corresponding properties of the PO, the premise being of course that the former is simpler to handle than the latter.

\subsection{Applying the CGMT to ERM for binary classification} \label{sec:mainproof}


In this section, we show how to apply the CGMT to \eqref{eq:gen_opt}. For convenience, we drop the subscript $\ell$ from $\xh_\ell$ and simply write
\begin{equation} \label{eq:opt}
\xh = \arg\min_{\x} \frac{1}{m} \sum_{i=1}^{m} \ell(y_i \ab_i^T \x), 
\end{equation}
where the measurements $y_i,~i\in[m]$ follow \eqref{eq:gen_model}. By rotational invariance of the Gaussian distribution of the measurement vectors $\ab_i,~i\in[m]$, we assume without loss of generality that $\x_0 = [1,0,...,0]^T$. Denoting $y_i\ab_i^T\x$ by $u_i$, \eqref{eq:opt} is equivalent to the following min-max optimization: 
\begin{align}\label{eq:mmbn}
\min_{\ub,\x} \max_{\pmb{\beta}} \frac{1}{m} \sum_{i=1}^{m}\ell(u_i) + \frac{1}{m}\sum_{i=1}^{m}\beta_i u_i 
- \frac{1}{m}\sum_{i=1}^{m}\beta_i y_i \ab_i^T \x .
\end{align}
Now, let us define 
$$\ab_i=[s_i;\tilde{\ab}_i],~i\in[m]\quad\text{ and }\quad \x=[x_1;\widetilde{\x}],$$ such that $s_i$ and $x_1$ are the first entries of $\ab_i$ and $\x$, respectively. Note that in this new notation \eqref{eq:gen_model} becomes:
\bea\label{eq:y_pf}
y_i =f(s_i),
\eea
and 
\bea\label{eq:corr_pf}
\corr{\xh}{\x_0} = \frac{\widehat{x}_1}{\sqrt{\widehat{x}_1^2+\|\widetilde{\xh}\|_2^2}},
\eea
where we have decomposed $\xh=[\widehat{x}_1;{\widetilde{\xh}}]$.
Also, \eqref{eq:mmbn} is written as 
\begin{dmath*}
\min_{\ub,\x}\max_{\pmb{\beta}}  \frac{1}{m}\sum_{i=1}^{m} \ell(u_i) + \frac{1}{m}\sum_{i=1}^{m} \beta_i u_i+ \frac{1}{m}\sum_{i=1}^{m} \beta_i y_i \tilde{\ab}_i^T \widetilde{\x} - \frac{1}{m}\sum_{i=1}^{m} \beta_i y_i s_i x_1,   
\end{dmath*}
or, in matrix form:
\begin{dmath}\label{eq:normopt_PO}
\min_{\ub,\x}\max_{\pmb{\beta}}~ \frac{1}{m}\betab^T\mathbf{D}_y\widetilde{\A}\widetilde{\x} + \frac{1}{m}x_1\pmb{\beta}^T\mathbf{D}_y\s+ \frac{1}{m}\pmb{\beta}^T\ub +\frac{1}{m}\sum_{i=1}^m \ell (u_i).
\end{dmath}
where $\mathbf{D}_\y := {\rm{diag}}(y_1,y_2,...,y_m)$ is a diagonal matrix with $y_1,y_2,...y_m$ on the diagonal, $\s=[s_1,\ldots,s_m]^T$ and $\widetilde{\A}$ is an $m\times (n-1)$ matrix with rows $\tilde{\ab}_i^T,~i\in[m]$.

In \eqref{eq:normopt_PO} we recognize that the first term has the bilinear form required by the GMT in \eqref{eq:PO_loc}. The rest of the terms form the function $\psi$ in \eqref{eq:PO_loc}: they are independent of $\widetilde\A$ and convex-concave as desired by the CGMT. Therefore, we have expressed \eqref{eq:opt} in the desired form of a PO and for the rest of the proof we will analyze the probabilistically equivalent  AO problem. In view of \eqref{eq:AO_loc}, this is given as follows,
\begin{dmath}\label{eq:normopt}
\min_{\ub,\x}\max_{\pmb{\beta}} \frac{1}{m} \left\lVert\widetilde{\x}\right\rVert_2 \mathbf{g}^T \mathbf{D}_y \pmb{\beta} + \frac{1}{m} \left\lVert \mathbf{D}_y\pmb{\beta}\right\rVert_2\mathbf{h}^T\widetilde{\x} - \frac{1}{m}x_1\pmb{\beta}^T\mathbf{D}_y\s+ \frac{1}{m}\pmb{\beta}^T\ub + \frac{1}{m}\sum_{i=1}^m \ell (u_i) \,,
\end{dmath}
where as in \eqref{eq:AO_loc} $\mathbf{g}\sim\mathcal{N}(0,I_m)$ and $\mathbf{h}\sim\mathcal{N}(0,I_{n-1})$.

\subsection{Analysis of the Auxiliary Optimization}\label{sec:B3}
Here, we show how to analyze the AO in \eqref{eq:normopt}. To begin with, note that $y_i\in\{\pm1\}$, therefore $\mathbf{D}_\y\mathbf{g} \sim\mathcal{N}(0,I_m)$ and $\left\lVert\mathbf{D}_\y\pmb{\beta}\right\rVert_2 = \left\lVert\pmb{\beta}\right\rVert_2$. Also, let us denote the first entry $x_1$ of $\x$ as 
$$
\mu:=x_1.
$$ 
From \cite[Lem.~A.3]{Master}, instead of the AO in \eqref{eq:normopt}, it suffices to analyze the following version
\begin{dmath}\label{eq:normopt2}
\min_{\ub,\mu,\alpha\geq 0}\max_{\pmb{\beta}} \min_{\|\widetilde{\x}\|_2=\alpha} \frac{1}{m} \left\lVert\widetilde{\x}\right\rVert_2 \mathbf{g}^T \mathbf{D}_y \pmb{\beta} + \frac{1}{m} \left\lVert \mathbf{D}_y\pmb{\beta}\right\rVert_2\mathbf{h}^T\widetilde{\x} - \frac{1}{m}\mu\pmb{\beta}^T\mathbf{D}_y\s+ \frac{1}{m}\pmb{\beta}^T\ub + \frac{1}{m}\sum_{i=1}^m \ell (u_i) \,,
\end{dmath}
\noindent Note that the ``$\min_{\ub,\x} \max_{\betab} $'' problem in \eqref{eq:normopt} is equivalent to a ``$\min_{\ub,\mu,\alpha\geq 0} \min_{\|\widetilde{x}\|_2=\alpha} \max_{\betab}$'' problem. Compared to that, the order of min-max in \eqref{eq:normopt2} is now flipped; see the discussion in \cite[Sec.~A.6]{Master}\footnote{Here we skip certain technical details in this argument regarding boundedness of the constraint sets in \eqref{eq:normopt}. While they are not trivial, they can be handled with the same techniques used in \cite{Master,PhaseLamp}.}. Now it is now possible to optimize over the direction of $\widetilde{\x}$, which leads to the following:
\begin{dmath}\label{eq:alpha_pf}
\min_{\alpha\ge0,\mu,\ub}~\max_{\pmb{\beta}} ~ \frac{1}{m}\alpha\mathbf{g}^T\pmb{\beta} - \frac{\alpha}{m}\left\lVert\pmb{\beta}\right\rVert_2\left\lVert\mathbf{h}\right\rVert_2 - \frac{1}{m}\mu \mathbf{s}^T\mathbf{D}_{\y} \pmb{\beta} + \frac{1}{m}\pmb{\beta}^T \mathbf{u} + \frac{1}{m} \sum_{i=1}^m\ell(u_i). 
\end{dmath}
Next, let $\gamma := \frac{\left\lVert\pmb{\beta}\right\rVert_2}{\sqrt{m}}$ and optimize over the direction of $\betab$ to yield
\begin{dmath}\label{eq:gamma}
\min_{\alpha\ge0,\ub,\mu}~\max_{\gamma\ge0}~\frac{\gamma}{\sqrt{m}}\left\lVert \alpha\mathbf{g}-\mu\mathbf{D}_\y\mathbf{s}+\ub\right\rVert_2 - \frac{\alpha}{\sqrt{m}}\gamma\left\lVert\mathbf{h}\right\rVert_2 + \frac{1}{m}\sum_{i=1}^m\ell(u_i).
\end{dmath}
To continue, we utilize the fact that for all $x\in\R$, 
$\min_{\tau>0}\frac{\tau}{2} + \frac{x^2}{2\tau m} = \frac{x}{\sqrt{m}}$. Hence 
\bea\nn
\frac{\gamma}{\sqrt{m}}\left\lVert \alpha\mathbf{g}-\mu\mathbf{D}_\y\mathbf{s}+\ub\right\rVert_2 =  \min_{\tau>0}~\frac{\gamma\tau}{2} + \frac{\gamma}{2\tau m}{\left\lVert  -\alpha\mathbf{g}+\mu\mathbf{D}_\y\mathbf{s}-\ub \right\rVert}_2 ^2\,.   
\eea
With this trick, the optimization over $\ub$ becomes separable over its coordinates $u_i,~i\in[m]$. By inserting this in \eqref{eq:gamma} we have
\begin{dmath*}
\min_{\alpha\ge0,\tau>0,\ub,\mu}\max_{\gamma\ge0}\frac{\gamma\tau}{2} - \frac{\alpha}{\sqrt{m}}\gamma\left\lVert\mathbf{h}\right\rVert_2 +  \frac{\gamma}{2\tau m}\sum_{i=1}^{m}(-\alpha g_i +\mu y_is_i- u_i) ^2+ \frac{1}{m}\sum_{i=1}^m\ell(u_i),
\end{dmath*}
Now, we show that the objective function above is convex-concave. Clearly, the function is linear (thus, concave in $\gamma$). Moreover, from Lemma \ref{lem:H_cvx}, the function $\frac{1}{2\tau}(\alpha g_i +\mu y_is_i- u_i) ^2$ is jointly convex in $(\alpha,\mu,u_i,\tau)$. The rest of the terms are clearly convex and this completes the argument. Hence, with a permissible change in the order of min-max, we arrive at  the following convenient form:
\begin{dmath} \label{eq:binfty}
\min_{\mu,\alpha\ge0,\tau>0}~\max_{\gamma\ge0}~\frac{\gamma \tau}{2}- \frac{\alpha}{\sqrt{m}}\gamma\left\lVert\mathbf{h}\right\rVert_2  + \frac{1}{m}\sum_{i=1}^m \env{\ell}{-\alpha g_i+\mu s_iy_i}{\frac{\tau}{\gamma}},
\end{dmath}
where recall the definition of the Moreau envelope in \eqref{eq:M_def2}.
As to now, we have reduced the AO into a random min-max optimization over only four scalar variables in \eqref{eq:binfty}. For fixed $\mu,\alpha,\tau,\gamma$, direct application of the weak law of large numbers, shows that the objective function of \eqref{eq:binfty} converges in probability to the following as  $m,n\rightarrow\infty$ and $\frac{m}{n}=\delta$:
$$
\gamma\frac{\tau}{2}-\frac{\alpha\gamma}{\sqrt{\delta}}+ \mathbb{E}\left[\env{\ell}{\alpha G+\mu YS}{\frac{\tau}{\gamma}} \right],
$$
where $G,S\sim\mathcal{N}(0,1)$ and $Y\sim f(S)$ (in view of \eqref{eq:y_pf}). 
Based on that, it can be shown (similar arguments are developed in \cite{Master,PhaseLamp}) that the random optimizers $\alpha_n$ and $\mu_n$ of \eqref{eq:binfty} converge to the deterministic optimizers $\alpha$ and $\mu$ of the following (deterministic) optimization problem (whenever these are bounded as the statement of the theorem requires):
\begin{dmath}\label{eq:det}
\min_{\alpha\ge0,\mu,\tau>0}\max_{\gamma\ge0}~ \gamma\frac{\tau}{2}-\frac{\alpha\gamma}{\sqrt{\delta}}+ \mathbb{E}\left[\env{\ell}{\alpha G+\mu YS}{\frac{\tau}{\gamma}} \right].
\end{dmath}
At this point, recall that $\alpha$ represents the norm of $\tilde\x$ and $\mu$ the value of $x_1$. Thus, in view of (i) \eqref{eq:corr_pf}, (ii) the equivalence between the PO and the AO, and, (iii) our derivations thus far we have that with probability approaching 1,
\bea
\lim_{n\rightarrow+\infty}\corr{\xh}{\x_0} = \frac{\mu}{\sqrt{\mu^2+\alpha^2}},\nn
\eea
where $\mu$ and $\alpha$ are the minimizers in \eqref{eq:det}. The three equations in \eqref{eq:eq_main} are derived by the first-order optimality conditions of the optimization in \eqref{eq:det}. We show this next.

\subsection{Convex-Concavity and First-order Optimality Conditions}\label{sec:B4}
First, we prove that the objective function in \eqref{eq:det} is convex-concave. For convenience define the function $F:\R^4\rightarrow\R$ as follows
\bea\label{eq:det_2}
 F(\alpha,\mu,\tau,\gamma) := \frac{\gamma\tau}{2}-\frac{\alpha\gamma}{\sqrt{\delta}}+ \mathbb{E}\left[\env{\ell}{\alpha G+\mu YS}{\frac{\tau}{\gamma}} \right].
\eea
Based on Lemma \ref{lem:Moreau_cvx}, it immediately follows that if $\ell$ is convex, $F$ is jointly convex in $(\al,\mu,\tau)$. To prove concavity of $F$ based on $\gamma$  it suffices to show that $\env{\ell}{x}{1/\gamma}$ is concave in $\gamma$ for all $x\in\R$. To show this we note that
\begin{align*}
\env{\ell}{x}{1/\gamma} = \min_{u} \frac{\gamma}{2} (x-u)^2 + \ell(u),
\end{align*}
which is the point-wise minimum of linear functions of $\gamma$. Thus, using Proposition \ref{lem:min_cvx}(b), we conclude that  $\env{\ell}{x}{1/\gamma}$ is concave in $\gamma$. This completes the proof of convex-concavity of the function $F$ in \eqref{eq:det_2} when $\ell$ is convex. 
By direct differentiation and applying Proposition \ref{propo:EME_sum}(a), the first order optimality conditions of the min-max optimization in \eqref{eq:det} are as follows:
\begin{subequations}\label{eq:four_eq}
\bea
\label{eq:1}
\mathbb{E}\left[SY\cdot\envdx{\ell}{\ourx}{\frac{\tau}{\gamma}}\right] &= 0,
\\
\label{eq:2}
\mathbb{E}\left[G\cdot \envdx{\ell}{\ourx}{\frac{\tau}{\gamma}} \right]&=\frac{\gamma}{\sqrt{\delta}},
\\
\label{eq:3}
\frac{\gamma}{2} + \frac{1}{\gamma}\mathbb{E}\left[\envdla{\ell}{\ourx}{\frac{\tau}{\gamma}}\right]&=0,
\\
\label{eq:4}
-\frac{\alpha}{\sqrt{\delta}}-\frac{\tau}{\gamma^2}\mathbb{E}\left[\envdla{\ell}{\ourx}{\frac{\tau}{\gamma}}\right]+\frac{\tau}{2} &= 0.
\eea
\end{subequations}
Next, we show how these equations simplify to the following system of equations (same as \eqref{eq:eq_main}: 
\begin{subequations}\label{eq:reg_main}
\bea
 \Exp\bigg[Y\, S \cdot\envdx{\ell}{\ourx}{\la}  \bigg]&=0 , \label{eq:mureg_main}\\
 {\la^2}\,{\delta}\,\Exp\bigg[\,\left(\envdx{\ell}{\ourx}{\la}\right)^2\,\bigg]&=\alpha^2 ,
\label{eq:alphareg_main}\\
\lambda\,\delta\,\E\bigg[ G\cdot \envdx{\ell}{\ourx}{\la}  \bigg]&=\alpha .
\label{eq:lambdareg_main}
\eea
\end{subequations}
Let $\lambda := \frac{\tau}{\gamma}$. 
First, \eqref{eq:mureg_main} is immediate from equation \eqref{eq:1}.
Second, substituting $\gamma$ from \eqref{eq:3} in \eqref{eq:4} yields $\tau=\frac{\alpha}{\sqrt{\delta}}$ or $\gamma=\frac{\alpha}{\lambda \sqrt{\delta}}$, which together with \eqref{eq:2} leads to \eqref{eq:lambdareg_main}. Finally, \eqref{eq:alphareg_main} can be obtained by substituting $\gamma = \frac{\alpha}{\lambda\sqrt{\delta}}$ in \eqref{eq:3} and using the fact that (see Proposition \ref{propo:der}):
\begin{equation*}
\envdla{\ell}{\ourx}{\lambda} = -\frac{1}{2}(\envdx{\ell}{\ourx}{\lambda})^2.
\end{equation*}

\subsection{On the uniqueness of solutions to \eqref{eq:reg_main}: Proof of Proposition \ref{propo:Unique}}\label{sec:unique}
Here we prove the claim of Proposition \ref{propo:Unique} through the following lemmas. As we discussed in Remark \ref{rem:unique}, the main part of the proof is showing strict convex-concavity of $F$ in \eqref{eq:DO}. Lemma \ref{lem:F} proves that this is the case, and Lemmas \ref{lem:unique_1} and \ref{lem:unique_2} show that this is sufficient for the uniqueness of solutions to \eqref{eq:reg_main}. When put together, these complete the proof of Proposition \ref{propo:Unique}.

\begin{lem}[Strict convex-concavity of \eqref{eq:det_2}]\label{lem:F}
Let $\ell:\R\rightarrow\R$ be proper and strictly convex function. Further assume that $\ell$ is continuously differentiable with $\ellp(0)\neq 0$. Also assume that $SY$ has positive density in the real line. Then, the function $F:\R^4\rightarrow\R$ defined in \eqref{eq:det_2} is strictly convex in $(\alpha,\mu,\tau)$ and strictly concave in $\gamma$.
\end{lem}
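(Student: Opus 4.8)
The plan is to write $F$ as the expected Moreau envelope studied in Proposition~\ref{propo:EME_sum} plus a bilinear remainder, and then exploit the fact that the remainder is, separately in each of the two blocks of variables, an \emph{affine} perturbation, which preserves strict convexity and strict concavity. Concretely, recalling $\Omega(\alpha,\mu,\tau,\gamma):=\E\!\left[\env{\ell}{\ourx}{\tau/\gamma}\right]$ from \eqref{eq:Omega}, we have $F(\alpha,\mu,\tau,\gamma)=\tfrac{\gamma\tau}{2}-\tfrac{\alpha\gamma}{\sqrt\delta}+\Omega(\alpha,\mu,\tau,\gamma)$; here the sign of the Gaussian summand inside the envelope — which appeared as $-\alpha g_i$ in \eqref{eq:binfty} — is immaterial once the expectation over the symmetric variable $G$ is taken, so this $\Omega$ is exactly the one appearing in Proposition~\ref{propo:EME_sum}.

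First I would check that the hypotheses of Proposition~\ref{propo:EME_sum} are met under the assumptions of the lemma: $\ell$ is proper, convex and continuously differentiable (hence lsc), strictly convex, and has $\ellp(0)\neq 0$; $G,S\simiid\Nn(0,1)$; and $SY=Sf(S)$ has positive density on the real line. Then part~(d) of that proposition gives that $\Omega$ is strictly convex in $(\alpha,\mu,\tau)$ and strictly concave in $\gamma$. Next I would add back the term $g(\alpha,\mu,\tau,\gamma):=\tfrac{\gamma\tau}{2}-\tfrac{\alpha\gamma}{\sqrt\delta}$: for each fixed $\gamma>0$, $g$ is an affine function of $(\alpha,\mu,\tau)$ (indeed $\mu$ does not enter), so $F(\cdot,\cdot,\cdot,\gamma)=\Omega(\cdot,\cdot,\cdot,\gamma)+g(\cdot,\cdot,\cdot,\gamma)$ is strictly convex in $(\alpha,\mu,\tau)$, since adding an affine map to a strictly convex function keeps it strictly convex; symmetrically, for each fixed $(\alpha,\mu,\tau)$ with $\tau>0$ the map $\gamma\mapsto g(\alpha,\mu,\tau,\gamma)=\big(\tfrac{\tau}{2}-\tfrac{\alpha}{\sqrt\delta}\big)\gamma$ is linear in $\gamma$, so $F(\alpha,\mu,\tau,\cdot)$ inherits strict concavity from $\Omega(\alpha,\mu,\tau,\cdot)$. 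That completes the argument.

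The substantive work behind this lemma is not in the deduction above but in Proposition~\ref{propo:EME_sum}(d) and the results it rests on (Propositions~\ref{propo:strict_EME}, \ref{propo:strict_EME2} and Lemmas~\ref{lem:prox_diff}, \ref{lem:key_prox}), all of which are already established. Within the present argument the only points needing care are matching the density assumption on $SY$ to the exact form required by those propositions, together with the two routine observations that the reparametrization $\la=\tau/\gamma$ is, for fixed $\gamma>0$, a linear invertible change of variables in $(\alpha,\mu,\tau)$ so that joint strict convexity transfers (this is already folded into the statement of Proposition~\ref{propo:EME_sum}), and that the constraints $\alpha\ge0$, $\tau>0$, $\gamma>0$ only restrict attention to convex sets on which strict convexity and concavity are preserved. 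Since all of these are minor, I do not expect any genuine obstacle at this step — the difficulty has been pushed into the appendix results on the expected Moreau envelope.
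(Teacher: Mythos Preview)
Your proposal is correct and is essentially the same approach as the paper's proof, which simply cites Proposition~\ref{propo:EME_sum}(d) and leaves the rest implicit. You have merely made explicit the decomposition $F=\Omega+g$ and the observation that the bilinear remainder $g$ is affine in each block separately, so strict convexity in $(\alpha,\mu,\tau)$ and strict concavity in $\gamma$ are inherited from $\Omega$.
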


\begin{proof}
The claim follows directly from the strict convexity-concavity properties of the expected Moreau-envelope proved in Proposition \ref{propo:strict_EME} and \ref{propo:strict_EME2}. Specifically, we apply Proposition \ref{propo:EME_sum}.
\end{proof}

\begin{lem}\label{lem:unique_1}
 If the objective function in \eqref{eq:det_2} is strictly convex in $(\alpha,\mu,\tau)$ and strictly concave in $\gamma$, then \eqref{eq:four_eq} has a unique solution $(\alpha,\mu,\tau,\gamma)$.
\end{lem}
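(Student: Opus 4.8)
The plan is to recognize that the system \eqref{eq:four_eq} is precisely the stationarity system $\nabla F=0$ for the function $F$ of \eqref{eq:det_2} (the partial derivatives are read off via Proposition \ref{propo:EME_sum}(a), exactly as in Section \ref{sec:B4}), and then to invoke the classical fact that a \emph{strictly} convex--concave function possesses at most one saddle point. The argument splits into two short steps.

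First I would show that every solution of \eqref{eq:four_eq} is in fact a saddle point of $F$, i.e.\ a point $(\alpha^\star,\mu^\star,\tau^\star,\gamma^\star)$ with
\[
F(\alpha^\star,\mu^\star,\tau^\star,\gamma)\le F(\alpha^\star,\mu^\star,\tau^\star,\gamma^\star)\le F(\alpha,\mu,\tau,\gamma^\star)
\]
for all admissible $(\alpha,\mu,\tau)$ and all admissible $\gamma$. Indeed, for fixed $\gamma^\star$ the map $(\alpha,\mu,\tau)\mapsto F(\alpha,\mu,\tau,\gamma^\star)$ is convex and its gradient vanishes at $(\alpha^\star,\mu^\star,\tau^\star)$ by \eqref{eq:1}, \eqref{eq:2}, \eqref{eq:3}, so $(\alpha^\star,\mu^\star,\tau^\star)$ is a global minimizer; likewise, for fixed $(\alpha^\star,\mu^\star,\tau^\star)$ the scalar map $\gamma\mapsto F(\alpha^\star,\mu^\star,\tau^\star,\gamma)$ is concave with vanishing derivative at $\gamma^\star$ by \eqref{eq:4}, so $\gamma^\star$ is a global maximizer. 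Note the equations already force $\tau,\gamma>0$ through the terms $\tau/\gamma$ and $1/\gamma$, so the critical point lies in the (relative) interior of the admissible domain and no boundary/multiplier issues arise.

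Second I would run the standard ``rectangle'' argument. Suppose $(\alpha_1,\mu_1,\tau_1,\gamma_1)$ and $(\alpha_2,\mu_2,\tau_2,\gamma_2)$ both solve \eqref{eq:four_eq}, and abbreviate $x_i=(\alpha_i,\mu_i,\tau_i)$. Chaining the two saddle inequalities established above yields
\[
F(x_1,\gamma_1)\le F(x_2,\gamma_1)\le F(x_2,\gamma_2)\le F(x_1,\gamma_2)\le F(x_1,\gamma_1),
\]
so all four quantities coincide. In particular $F(x_2,\gamma_1)=F(x_1,\gamma_1)=\min_{x}F(x,\gamma_1)$, so $x_2$ is also a minimizer of the \emph{strictly} convex function $F(\cdot,\gamma_1)$; strict convexity forces $x_1=x_2$. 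Symmetrically, $F(x_1,\gamma_2)=F(x_1,\gamma_1)=\max_{\gamma}F(x_1,\gamma)$ makes $\gamma_2$ a maximizer of the strictly concave function $F(x_1,\cdot)$, whence $\gamma_1=\gamma_2$. This gives the claimed uniqueness.

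The only point that needs care — and the ``obstacle'', though a mild one — is the passage from the equality system \eqref{eq:four_eq} to the global saddle-point property in the first step: one must make sure the hypotheses needed for Proposition \ref{propo:EME_sum}(a) hold so that $F$ is genuinely differentiable with the stated gradient, and that the admissible set ($\alpha\ge0$, $\tau>0$, $\gamma\ge0$) is convex so that ``zero gradient $\Rightarrow$ global extremum'' is legitimate; both are satisfied here, and the $1/\gamma$-dependence rules out $\gamma=0$. Once this is in place, uniqueness is a purely convex-analytic consequence of the assumed strict convex-concavity, requiring no further computation.
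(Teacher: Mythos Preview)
Your proof is correct and follows essentially the same approach as the paper: both arguments identify solutions of \eqref{eq:four_eq} with saddle points of $F$ and then invoke uniqueness of saddle points under strict convex--concavity. The only minor difference is stylistic: the paper first rules out the cases $x_1=x_2$ and $\gamma_1=\gamma_2$ separately and then derives a direct contradiction from the \emph{strict} saddle inequalities, whereas you run the standard rectangle chain with non-strict inequalities and extract uniqueness from strict convexity/concavity afterwards. Your version is arguably a touch more careful in that you spell out explicitly why a zero of the gradient is a genuine saddle point (the paper takes this for granted), but the substance is the same.
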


\begin{proof}
Let $(\al_i,\mu_i,\tau_i,\gamma_i),\; i=1,2,$ be two different saddle points of \eqref{eq:det_2}. For convenience, let $\x_i := (\al_i,\mu_i,\tau_i)$ for $i=1,2$. By strict-concavity in $\gamma$, for fixed values of $\x:=(\alpha,\mu,\tau)$, the value of $\gamma$ maximizing $F(\x,\gamma)$ is unique. Thus, if $\x_1 = \x_2$ then it must hold that $\gamma_1=\gamma_2$, which is a contraction to our assumption of $(\x_1,\gamma_1) \neq (\x_2,\gamma_2)$.  Similarly, we can use strict-convexity to derive that $\gamma_1\neq\gamma_2$. Then based on the definition of the saddle point, and strict convexity-concavity, the following two relations hold for $i=1,2$:
\begin{align*}
 F(\x_i,\gamma)< F(\x_i,\gamma_i) <F(\x,\gamma_i), \quad \text{for all  } \x \neq \x_i, \gamma\neq\gamma_i.
\end{align*}
We choose $\x =\x_2, \gamma= \gamma_2$ for $i=1$ and $\x =\x_1, \gamma= \gamma_1$ for $i=2$ to find
\begin{align*}
 F(\x_1,\gamma_2)< F(\x_1,\gamma_1) <F(\x_2,\gamma_1), \\[4pt]
 F(\x_2,\gamma_1)< F(\x_2,\gamma_2) <F(\x_1,\gamma_2).
\end{align*}
From the above, it follows that $F(\x_1,\gamma_1) < F(\x_2,\gamma_2)$ and $F(\x_1,\gamma_1) > F(\x_2,\gamma_2)$, which is a contradiction. This completes the proof.
\end{proof}

\begin{lem}\label{lem:unique_2} If \eqref{eq:four_eq} has a unique solution $(\alpha^\star,\mu^\star,\tau^\star,\gamma^\star)$ then \eqref{eq:reg_main} has a unique solution $(\alpha^\star,\mu^\star,\lambda^\star)$.
\end{lem}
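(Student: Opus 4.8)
The plan is to set up an explicit change of variables between the four-variable system \eqref{eq:four_eq} and the three-variable system \eqref{eq:reg_main} and to show that it restricts to a bijection between their solution sets, so that uniqueness of one transfers to the other. As a preliminary, I would recall from Section \ref{sec:B4} that the reduction of \eqref{eq:four_eq} to \eqref{eq:reg_main} is carried out by the substitution $\la=\tau/\gamma$, and that it is shown there that every solution $(\al,\mu,\tau,\gamma)$ of \eqref{eq:four_eq} necessarily obeys $\tau=\al/\sqrt{\delta}$ and $\gamma=\al/(\la\sqrt{\delta})$, with the resulting triple $(\al,\mu,\la)$ solving \eqref{eq:reg_main}. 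I would also record at the outset that solutions of \eqref{eq:four_eq} have $\al,\tau,\gamma>0$ (if $\al=0$, then inserting $\envdla{\ell}{\ourx}{\tau/\gamma}=-\tfrac12\big(\envdx{\ell}{\ourx}{\tau/\gamma}\big)^2$ into \eqref{eq:3} gives $\E\big[\envdla{\ell}{\ourx}{\tau/\gamma}\big]=-\gamma^2/2$, and feeding this into \eqref{eq:4} forces $\tau=0$, a contradiction), and that solutions of \eqref{eq:reg_main} have $\al,\la>0$ ($\la>0$ is required for the Moreau-envelope derivatives in \eqref{eq:reg_main} to be defined, while $\al=0$ would make \eqref{eq:alphareg_main} read $\E\big[(\envdx{\ell}{\mu SY}{\la})^2\big]=0$, forcing $\mathrm{prox}_\ell(\cdot;\la)$ to be the identity on the support of $\mu SY$, which is all of $\R$ by the standing hypothesis on $SY$, hence $\ell$ to be affine, contradicting strict convexity).

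The main step will be to check that the map $\Psi:(\al,\mu,\la)\mapsto\big(\al,\ \mu,\ \al/\sqrt{\delta},\ \al/(\la\sqrt{\delta})\big)$ sends any solution of \eqref{eq:reg_main} to a solution of \eqref{eq:four_eq}; this is a direct substitution using Proposition \ref{propo:der}. Writing $\tau=\al/\sqrt{\delta}$, $\gamma=\al/(\la\sqrt{\delta})$, so that $\tau/\gamma=\la$: equation \eqref{eq:1} is exactly \eqref{eq:mureg_main}; equation \eqref{eq:2} follows from \eqref{eq:lambdareg_main} since $\gamma/\sqrt{\delta}=\al/(\la\delta)$; equation \eqref{eq:3} follows from \eqref{eq:alphareg_main} together with the identity $\envdla{\ell}{\cdot}{\cdot}=-\tfrac12\big(\envdx{\ell}{\cdot}{\cdot}\big)^2$ and $\gamma^2=\al^2/(\la^2\delta)$, which give $\E\big[\envdla{\ell}{\ourx}{\la}\big]=-\gamma^2/2$; and equation \eqref{eq:4}, after the same substitutions, collapses to the true identity $\tau=\al/\sqrt{\delta}$. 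In the other direction, the facts recalled above say precisely that every solution $(\al,\mu,\tau,\gamma)$ of \eqref{eq:four_eq} equals $\Psi(\al,\mu,\la)$ with $\la:=\tau/\gamma$, and that $(\al,\mu,\la)$ solves \eqref{eq:reg_main}; equivalently, $\Phi:(\al,\mu,\tau,\gamma)\mapsto(\al,\mu,\tau/\gamma)$ sends solutions of \eqref{eq:four_eq} to solutions of \eqref{eq:reg_main}.

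It then remains to observe that $\Phi$ and $\Psi$ are mutually inverse on these solution sets: $\Phi\circ\Psi=\mathrm{id}$ because $(\al/\sqrt{\delta})/(\al/(\la\sqrt{\delta}))=\la$, and $\Psi\circ\Phi=\mathrm{id}$ precisely because solutions of \eqref{eq:four_eq} obey $\tau=\al/\sqrt{\delta}$, $\gamma=\al/(\la\sqrt{\delta})$. Hence $\Phi$ is a bijection from the solution set of \eqref{eq:four_eq} onto that of \eqref{eq:reg_main}, so if the former is the singleton $\{(\al^\star,\mu^\star,\tau^\star,\gamma^\star)\}$ then the latter is the singleton $\{(\al^\star,\mu^\star,\la^\star)\}$ with $\la^\star=\tau^\star/\gamma^\star$, which is the assertion of the lemma. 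The computation itself is routine; the one place that will need care is the boundary analysis guaranteeing $\al,\tau,\gamma,\la>0$ so that $\Phi$ and $\Psi$ are well defined and injective — sketched in the first paragraph — together with the bookkeeping that the four equations \eqref{eq:1}--\eqref{eq:4} really do reduce, under $\Psi$, to the three equations \eqref{eq:mureg_main}--\eqref{eq:lambdareg_main}.
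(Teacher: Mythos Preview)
Your proof is correct and follows essentially the same approach as the paper: the key step in both is constructing the map $\Psi:(\al,\mu,\la)\mapsto(\al,\mu,\al/\sqrt{\delta},\al/(\la\sqrt{\delta}))$ and checking that it sends solutions of \eqref{eq:reg_main} to solutions of \eqref{eq:four_eq} injectively, so that two distinct solutions of the former would contradict uniqueness in the latter. The paper phrases this as a direct contradiction argument and simply asserts $\al_i,\la_i>0$, whereas you set up the full bijection $\Phi,\Psi$ and attempt to justify the positivity constraints; this is more thorough than necessary but entirely sound.
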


\begin{proof}
First, following the same approach of deriving the equations \eqref{eq:reg_main} from \eqref{eq:four_eq} in Section \ref{sec:B4}, it is easy to see that existence of solution $(\alpha_1,\mu_1,\tau_1,\gamma_1)$ to \eqref{eq:four_eq} implies existence of solution $(\alpha_1,\mu_1,\la_1:=\frac{\tau_1}{\gamma_1})$ to \eqref{eq:reg_main}. Now, for the sake of contradiction to the statement of the lemma, assume that there are two different triplets $\vb_1:=(\alpha_1,\mu_1,\la_1)$ and $\vb_2:=(\alpha_2,\mu_2,\la_2)$ with $\al_1,\al_2,\la_1,\la_2 >0$ and satisfying \eqref{eq:reg_main}. Then, we can show that both $\w_i := (\alpha_i,\mu_i,\tau_i,\gamma_i)\,~i =1,2,$ such that: 
\begin{align*}
\tau_i := \frac{\al_i}{\sqrt{\delta}}, \quad \gamma_i = \frac{\al_i}{\la_i\sqrt{\delta}},\quad\quad i=1,2,
\end{align*}
satisfy the system of equations in \eqref{eq:four_eq}. However since $\vb_1\neq\vb_2$, it must be that $\w_1\neq\w_2$. This contradicts the assumption of uniqueness of solutions to \eqref{eq:four_eq} and completes the proof.
\end{proof}

\section{On Theorem \ref{sec:lem}}\label{sec:prooflem}
\begin{figure}
  \centering
    \includegraphics[width=9cm,height=7cm]{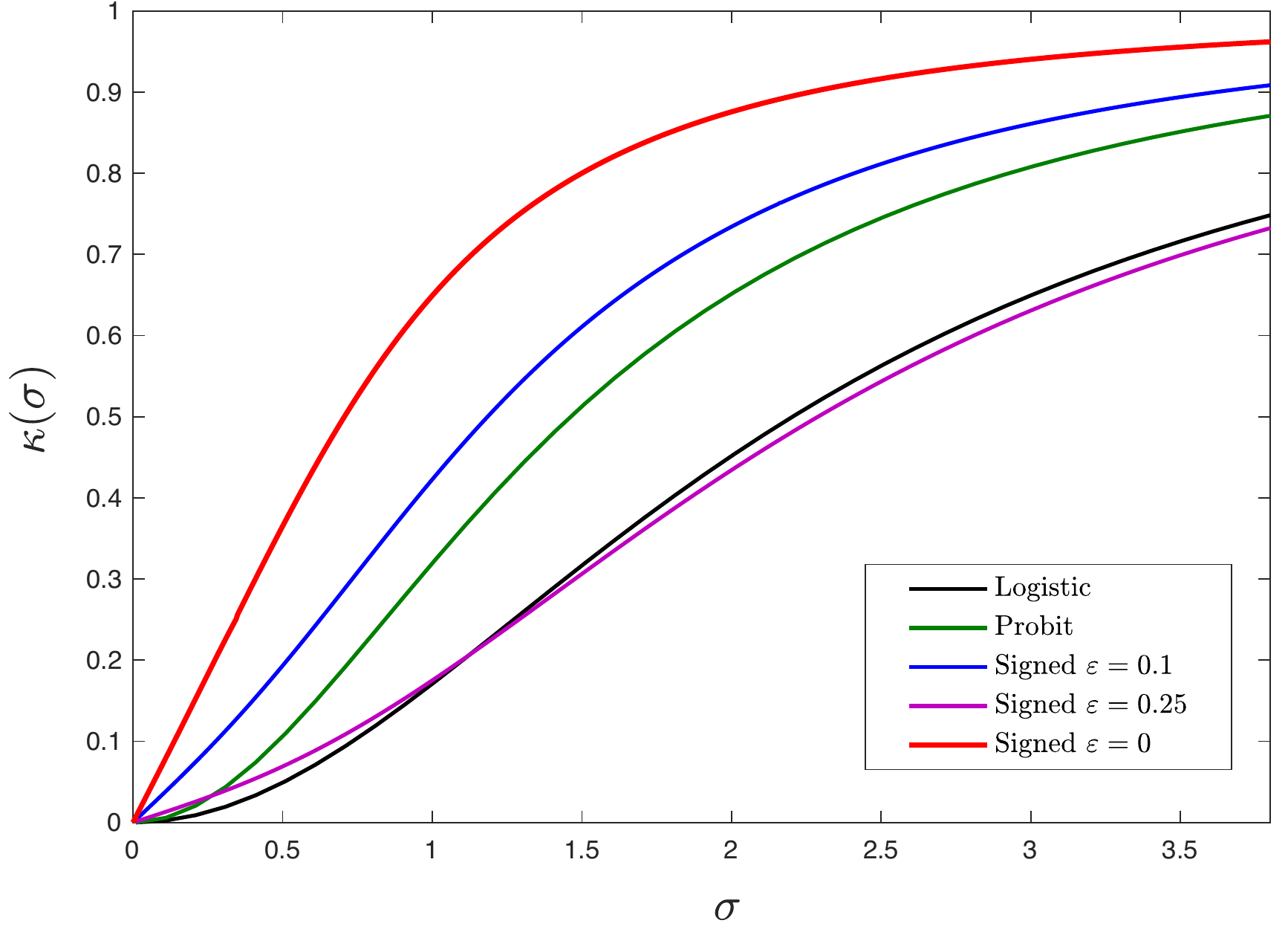}
      \caption{\footnotesize The value of $\kappa(\sig)$ as in Theorem \ref{sec:lem} for various measurement models. Since $\kappa(\sig)$ is a monotonic function of $\sigma$, the solution to $\kappa(\sigma) = 1/\delta$ determines the minimum possible value of $\sigma$. }
          \label{fig:sigma}
 \end{figure}
\subsection{On the uniqueness of solutions to the Equation $\kappa(\sig)=\frac{1}{\delta}$}
The existence of a solution to the equation $\kappa(\sig)=\frac{1}{\delta}$ was proved in the previous Section. However it is not clear if the solution to this equation is unique i.e., for any $\delta>1$ there exists only one $\sig_{\rm opt}>0$ such that $\kappa(\sig_{\rm opt}) = \frac{1}{\delta}.$ If this is the case then the Equation \eqref{eq:lem} in Theorem \ref{sec:lem} can be equivantly written as $$\sig_{\rm opt} = \sig, \text{ s.t. }\kappa(\sig) = \frac{1}{\delta}.$$
Although we do not prove this claim, our numerical experiments in Figure \ref{fig:sigma} show that $\kappa(\cdot)$ is a monotonic function for Noisy-signed (see Section \ref{sec:noisysigned} for the definition), Logistic and Probit measurements, implying the uniqueness of solution to the equation $\kappa(\sig)=\frac{1}{\delta}$ for all $\delta>1$. 


\subsection{Distribution of $SY$ in special cases}\label{sec:psy}
We derive the following densities for $SY$ for the special cases : \\

\noindent$\bullet~$  {\small\emph{Signed}}: $p_{SY}(w) = \sqrt{\frac{2}{\pi}}\exp(-w^2/2)\,\mathbbm{1}_{\{w\ge0\}}.$\\

\noindent$\bullet~$  {\small\emph{Logistic}}: $p_{SY} (w)= \sqrt{\frac{2}{\pi}}\,\frac{\exp({-w^2/2}) }{1+\exp(-w)}.$ \\

\noindent$\bullet~$  {\small\emph{Probit}}: $p_{SY} (w)= \sqrt{\frac{2}{\pi}}\,\Phi(w)\exp({-w^2/2}).$ \\

In particular we numerically observe that for Logistic and Probit models, the resulting densities are similar to the density of a gaussian distribution derived according to $\mathcal{N}(\Exp[SY],\text{Var}[SY])$. Figure \ref{fig:p_SY} illustrates this similarity for these two models. As it was discussed in Corollary \ref{cor:Stam} this similarity results in the tightness of the lower bound achieved for $\sig_{\rm opt}$ in Equation \eqref{eq:cor_stam}.

\begin{figure}
  \centering
    \includegraphics[width=7.8cm,height=6.4cm]{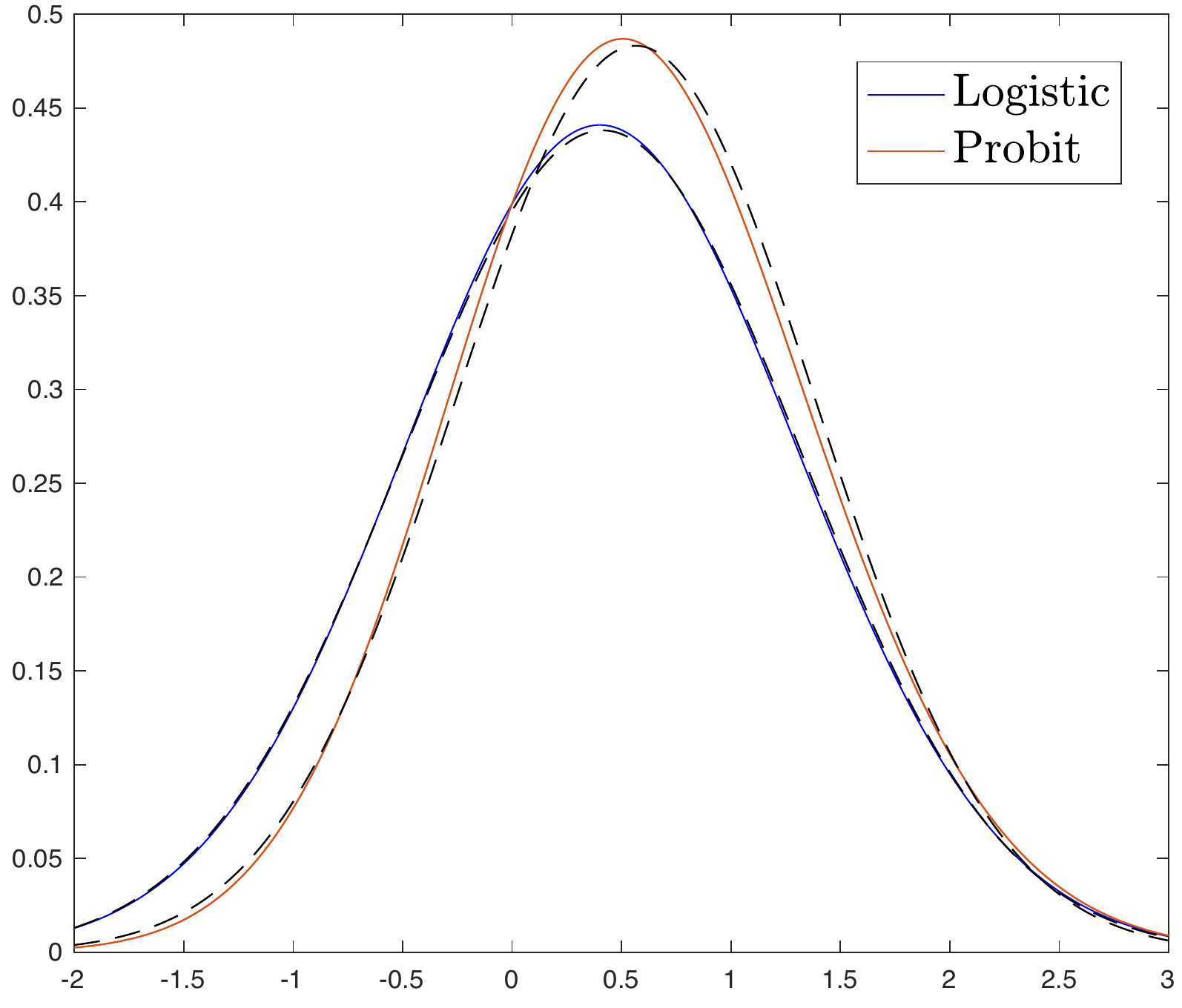}
      \caption{\footnotesize Probability distribution function of $SY$ for the Logistic and Probit models compared with the probability distribution function of the Gaussian random variable (dashed lines) with the same mean and variance i.e., $\mathcal{N}(\Exp[SY], \text{Var}[SY])$.}
          \label{fig:p_SY}
 \end{figure}
 
\section{On Theorem \ref{thm:opt_loss}}
\subsection{Completing the proof}
The following proposition gives a recipe to invert Moreau envelope functions and was used in the proof of Theorem \ref{thm:opt_loss}.
\begin{propo}[Inverse of the Moreau envelope]\cite[Result \,23 in the appendix]{advani2016statistical}\label{propo:inverse}
For $\la>0$ and $f$ a convex, lower semi-continuous function such that $g(\cdot) =\env{f}{\cdot}{\la}$, the Moreau envelope can be inverted so that $f(\cdot) = -\env{-g}{\cdot}{\la}.$
\end{propo}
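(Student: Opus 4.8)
The plan is to exploit the Fenchel--Legendre conjugate representation of the Moreau envelope already recorded in Proposition \ref{propo:FL}, namely $\env{f}{x}{\la} = \frac{1}{\la}q(x) - \frac{1}{\la}(q+\la f)^\star(x)$ with $q(x)=x^2/2$, and then simply apply it twice. First I would rewrite the hypothesis $g(\cdot)=\env{f}{\cdot}{\la}$ using Proposition \ref{propo:FL}, obtaining $\la\, g = q - (q+\la f)^\star$, i.e.
\begin{equation*}
(q+\la f)^\star = q - \la g.
\end{equation*}
Since $f$ is proper, lower semi-continuous and convex, so is $q+\la f$ (it is a sum of a finite convex function and a proper lsc convex function, hence proper lsc convex), and therefore by the biconjugation theorem \cite[Theorem 12.2]{rockafellar1970convex} we may take conjugates of both sides and use $(h^\star)^\star=h$ to recover
\begin{equation*}
q+\la f = (q-\la g)^\star.
\end{equation*}

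The second step is to recognize the right-hand side as (a rescaling of) a Moreau envelope of $-g$. Applying Proposition \ref{propo:FL} with $f$ replaced by $-g$ and the same parameter $\la$ gives $\env{-g}{x}{\la} = \frac{1}{\la}q(x) - \frac{1}{\la}(q - \la g)^\star(x)$, that is $(q-\la g)^\star = q - \la\,\env{-g}{\cdot}{\la}$. Substituting this into the previous display yields $q+\la f = q - \la\,\env{-g}{\cdot}{\la}$, and cancelling $q$ and dividing by $\la>0$ gives exactly $f(\cdot) = -\env{-g}{\cdot}{\la}$, as claimed.

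The only genuine subtlety — and the step I would be most careful about — is justifying the use of the biconjugation identity $(h^\star)^\star = h$: this requires $h := q+\la f$ to be proper, closed (lsc) and convex, and it also requires $-g$ to be such that $\env{-g}{\cdot}{\la}$ is well-defined, which is automatic once we know $q - \la g = h^\star$ is itself proper lsc convex (conjugates always are, and here $h^\star$ is finite-valued since $h$ is coercive thanks to the $q$ term). One should also note that $-g$ need not be convex, so the second application of Proposition \ref{propo:FL} must be read as a purely formal identity between the stated expressions — which is exactly how Proposition \ref{propo:FL} is phrased (for an arbitrary function $h$) — rather than as a statement about Moreau envelopes of convex functions. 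With those bookkeeping points in place, the argument is just the two conjugation moves above; no hard estimates are needed.
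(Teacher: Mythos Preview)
Your proof is correct. Note, however, that the paper does not actually prove Proposition \ref{propo:inverse}: it is stated with a citation to \cite{advani2016statistical} and used as a black box. That said, your argument via Proposition \ref{propo:FL} and biconjugation is exactly the machinery the paper deploys in the proof of Theorem \ref{thm:opt_loss} (cf.\ equations \eqref{eq:FLopt}--\eqref{eq:FLsecond}), where the same two moves are performed in the reverse direction to pass from $\ell_{\rm opt}$ back to its Moreau envelope. So while there is no ``paper's own proof'' to compare against, your approach is fully consistent with the paper's toolkit and would slot in naturally as a proof of the cited result.
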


\subsection{On the convexity of optimal loss function}\label{sec:opt_FL}
Here we provide a sufficient condition for $\ell_{\rm{opt}}(w)$ to be convex. 
\begin{lem} \label{lem:convex}
The optimal loss function as defined in Theorem \ref{thm:opt_loss} is convex if 
$$(\log(p_{W_{\sig}}))^{\prime\prime}(w)\le-\frac{1}{\sig^2+1},  \;\;\;\text{\;for all }w\in\mathbb{R} \;\text{and}\; \sig \ge 0.
$$
\end{lem}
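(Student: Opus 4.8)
The strategy is to use the Fenchel--Legendre representation of $\ell_{\rm opt}$ derived in the proof of Theorem \ref{thm:opt_loss}, namely \eqref{eq:FLopt}: $\ell_{\rm opt}(w) = \bigl(q + \alpha_1 q + \alpha_2 \log p_{W_{\rm opt}}\bigr)^\star(w) - q(w)$, where $q(w)=w^2/2$. Since the conjugate of any function is convex, $\bigl((1+\alpha_1)q + \alpha_2\log p_{W_{\rm opt}}\bigr)^\star$ is automatically convex; the only obstruction to convexity of $\ell_{\rm opt}$ is the subtracted term $q$. The plan is therefore to show that $\bigl((1+\alpha_1)q + \alpha_2\log p_{W_{\rm opt}}\bigr)^\star$ is not merely convex but \emph{strongly} convex with modulus at least $1$, which then makes the difference with $q$ convex. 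A standard duality fact (\cite{rockafellar1970convex,rockafellar2009variational}) is that $f^\star$ is $\tfrac1L$-strongly convex precisely when $f$ is $L$-smooth, i.e. $f$ is differentiable with $f'' \le L$ everywhere (equivalently $L\cdot\mathrm{id} - f'$ is monotone non-increasing... more precisely $f''\le L$). Hence it suffices to show that $h(w):=(1+\alpha_1)q(w) + \alpha_2\log p_{W_{\rm opt}}(w)$ satisfies $h''(w)\le 1$ for all $w$.

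Carrying this out: $h''(w) = (1+\alpha_1) + \alpha_2\,(\log p_{W_{\rm opt}})''(w)$, so the condition $h''\le 1$ becomes $\alpha_2 (\log p_{W_{\rm opt}})''(w) \le -\alpha_1$, i.e. $(\log p_{W_{\rm opt}})''(w) \le -\alpha_1/\alpha_2$ (using $\alpha_2>0$, which follows from the formulas \eqref{eq:alphas} together with the fact that $\Ic(W_{\rm opt})>1/(\sigma_{\rm opt}^2+1)$ shown via Cramér--Rao in the proof of Corollary \ref{cor:Stam}, so the denominator $\delta(\sigma_{\rm opt}^2\Ic(W_{\rm opt})+\Ic(W_{\rm opt})-1)$ is positive). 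From \eqref{eq:alphas}, $\alpha_1/\alpha_2 = 1 - \sigma_{\rm opt}^2\Ic(W_{\rm opt})$. Now invoke the constraint $\kappa(\sigma_{\rm opt})=1/\delta$ characterizing $\sigma_{\rm opt}$: rearranging $\kappa(\sigma)=1/\delta$ (as already done in \eqref{eq:frac}) gives a relation between $\Ic(W_{\rm opt})$, $\sigma_{\rm opt}$ and $\delta$; one checks that $1-\sigma_{\rm opt}^2\Ic(W_{\rm opt}) \ge \tfrac{1}{\sigma_{\rm opt}^2+1}$ always holds (indeed $\sigma^2\Ic(W_\sigma) < 1$ was shown in the proof of Theorem \ref{sec:lem}, and the sharper bound follows from $\Ic(W_\sigma)\le \Ic(\sigma G) = \sigma^{-2}$ combined with... one must verify the precise inequality $-\alpha_1/\alpha_2 \le -\tfrac{1}{\sigma_{\rm opt}^2+1}$, equivalently $\alpha_1/\alpha_2 \ge \tfrac{1}{\sigma_{\rm opt}^2+1}$). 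Granting that, the hypothesis $(\log p_{W_\sigma})''(w) \le -\tfrac{1}{\sigma^2+1}$ for all $w$ and all $\sigma\ge0$ implies in particular $(\log p_{W_{\rm opt}})''(w) \le -\tfrac{1}{\sigma_{\rm opt}^2+1} \le -\alpha_1/\alpha_2$, which is exactly what is needed. Therefore $h'' \le 1$, $h^\star$ is $1$-strongly convex, and $\ell_{\rm opt} = h^\star - q$ is convex.

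The main obstacle I anticipate is the bookkeeping connecting the stated hypothesis bound $-\tfrac{1}{\sigma^2+1}$ to the ratio $\alpha_1/\alpha_2 = 1-\sigma_{\rm opt}^2\Ic(W_{\rm opt})$: one must be careful to confirm that, under $\kappa(\sigma_{\rm opt})=1/\delta$ and $\delta>1$, the denominator $\sigma_{\rm opt}^2\Ic(W_{\rm opt})+\Ic(W_{\rm opt})-1$ is strictly positive so that $\alpha_2>0$ and the direction of the inequality is preserved, and that $1-\sigma_{\rm opt}^2\Ic(W_{\rm opt}) \ge \tfrac{1}{\sigma_{\rm opt}^2+1}$. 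These follow from the Cramér--Rao bound $\Ic(W_\sigma)\ge 1/\mathrm{Var}[W_\sigma] \ge 1/(1+\sigma^2)$ and from $\sigma^2\Ic(W_\sigma)<1$, both already established in the excerpt, so no genuinely new estimate is required — only careful assembly. A secondary point to state cleanly is the smoothness--strong-convexity duality and the fact that $\ell_{\rm opt}$ as defined is proper, closed and convex on its effective domain so that the conjugate manipulations are valid; this was already used in the proof of Theorem \ref{thm:opt_loss} via Proposition \ref{propo:FL}.
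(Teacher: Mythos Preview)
Your overall strategy coincides with the paper's: both routes reduce to verifying that $h:=(1+\alpha_1)q+\alpha_2\log p_{W_{\rm opt}}$ satisfies $h''\le 1$, i.e.\ $\alpha_1+\alpha_2(\log p_{W_{\rm opt}})''\le 0$. The paper reaches this by directly computing $\ell_{\rm opt}''(w)=1/h''(g(w))-1$ via $(f^\star)'=(f')^{-1}$; you reach it via the smoothness/strong-convexity duality. These are equivalent packagings of the same inequality.

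There are, however, two genuine issues in your write-up. First, the key inequality is stated in the wrong direction. You need $(\log p_{W_{\rm opt}})''\le -\alpha_1/\alpha_2$; since the hypothesis only gives $(\log p_{W_{\rm opt}})''\le -\tfrac{1}{\sigma_{\rm opt}^2+1}$, what must be verified is
\[
\frac{\alpha_1}{\alpha_2}=1-\sigma_{\rm opt}^2\,\Ic(W_{\rm opt}) \;\le\; \frac{1}{\sigma_{\rm opt}^2+1},
\]
not $\ge$ as you wrote (twice). This inequality is equivalent to $\Ic(W_{\rm opt})\ge\tfrac{1}{\sigma_{\rm opt}^2+1}$, which is exactly the Cram\'er--Rao lower bound together with $\mathrm{Var}[W_{\rm opt}]\le 1+\sigma_{\rm opt}^2$; this is how the paper closes the argument (via \eqref{eq:pos_second}). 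Your attempted justification through $\Ic(W_\sigma)\le\Ic(\sigma G)=\sigma^{-2}$ points the wrong way and cannot yield the needed bound.

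Second, the smoothness/strong-convexity duality you invoke requires $h$ to be convex; without this, $h^\star$ may be identically $+\infty$ and the duality statement is vacuous. You flag this as a ``secondary point'' and defer to the proof of Theorem~\ref{thm:opt_loss}, but that proof in fact refers \emph{forward} to the present lemma for precisely this convexity. The paper establishes it by a separate, non-obvious step: one checks the algebraic identity $1+\alpha_1-\alpha_2/\sigma_{\rm opt}^2=0$ (a direct consequence of $\kappa(\sigma_{\rm opt})=1/\delta$ and the formulas \eqref{eq:alphas}) and combines it with the decomposition $\log p_{W_{\rm opt}}(w)=-w^2/(2\sigma_{\rm opt}^2)+\text{(convex in }w\text{)}$, which comes from writing $p_{W_{\rm opt}}$ as a Gaussian convolution. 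You will need to supply this step as well for your duality argument to go through.
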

\begin{proof}
Using \eqref{eq:FLME} optimal loss function is written in the following form
\begin{align}\label{eq:fen_lossapp}
\ell_{\rm{opt}}(w) = \left(q+\alpha_1q+\alpha_2\log(p_{W_{\rm opt}})\right)^\star(w) - q(w).
\end{align}
Next we prove that $q+\alpha_1q+\alpha_2\log(p_{W_{\rm opt}})$ is a convex function. We first show that both $\alpha_1$ and $\alpha_2$ are positive numbers for all values of $\sig_{\rm opt}$. We first note that since $G$ and $SY$ are independent random variables $\sig_{\rm opt}^2\Ic(W_{\rm opt}) < \sig_{\rm opt}^2\Ic(\sig_{\rm opt}\,G) = 1$. Therefore 
\begin{align}\label{eq:pos_first}
1-\sig_{\rm opt}^2\Ic(W_{\rm opt})>0.
\end{align}
Additionally following Cramer-Rao bound \cite{barron1984monotonic} for fisher information, it yields that :
\begin{align*}
\Ic(W_{\rm opt}) &> \frac{1}{\Exp\left[(W_{\rm opt}-\Exp[W_{\rm opt}])^2\right]}\\
&=\frac{1}{1+\sig_{\rm opt}^2-(\Exp[SY])^2}.
\end{align*}
Using this inequality for $\Ic(W
_{\rm opt})$ we derive that 
\begin{align}\label{eq:pos_second}
\sig_{\rm opt}^2\Ic(W_{\rm opt})+\Ic(W_{\rm opt})-1 > 0.
\end{align}
From \eqref{eq:pos_first} and \eqref{eq:pos_second} it follows that $\alpha_1, \alpha_2 >0$.\\
Based on the definition of the random variable $W_{\rm opt}$: 
\begin{align*}
\log p_{W_{\rm opt}}(w) = -w^2/(2\sig_{\rm opt}^2)  \,+  \log \int_{-\infty}^{\infty}{\exp\left((2wz-z^2)/{2\sig_{\rm opt}^2}\right)}\,p_{SY}(z)\, \text{d} z + c,
\end{align*}
where $c$ is a constant independent of $w$. By differentiating twice we see that $$ \log \int_{-\infty}^{\infty}{\exp\left((2wz-z^2)/{2\sig_{\rm opt}^2}\right)}\,p_{SY}(z)\, \text{d} z$$ is a convex function of $w$. Therefore for proving that $q+\alpha_1q+\alpha_2\log(p_{W_{\rm opt}})$ is a convex function it is sufficient to prove that $(1+\alpha_1-\alpha_2/\sig_{\rm opt}^2)q$ is a convex function or equivalently
$1+\alpha_1-\alpha_2/\sig_{\rm opt}^2 \ge 0$.  By replacing values of $\alpha_1,\alpha_2$ and recalling the equation for $\sig_{\rm opt}$ it yields that 
$$
1+\alpha_1-\alpha_2/\sig_{\rm opt}^2 = 0,
$$
which implies the convexity of $q+\alpha_1q+\alpha_2\log(p_{W_{\rm opt}})$. For obtaining the derivative of $\ell_{opt}$, we use the result in \cite[Cor. 23.5.1]{rockafellar1970convex} which states that for a convex function $f$
 $$(f^\star)^{\prime} = (f^{\prime})^{-1}.$$ Therefore following \eqref{eq:fen_lossapp}
 \begin{align}\label{eq:eq_lprime}
 \ell_{\rm opt}^\prime (w)=(q^\prime+\alpha_1q^\prime+\alpha_2(\log(p_{W_{\rm opt}}))^\prime)^{-1}(w)-w.
 \end{align}
By differentiating again and using the properties of inverse function it yields that
\begin{align}\label{eq:lprimeprime}
\ell_{\rm opt}^{\prime\prime}(w) = \frac{1}{1+\alpha_1+\alpha_2(\log(p_{W_{\rm opt}}))^{\prime\prime}(g(w))}-1,
\end{align}
where
$$
g(w) := (q^\prime+\alpha_1q^\prime+\alpha_2(\log(p_{W_{\rm opt}}))^\prime)^{-1}(w).
$$
Note that denominator of \eqref{eq:lprimeprime} is nonnegative since it is second derivative of a convex function. Therefore it is evident from \eqref{eq:lprimeprime} that a sufficient condition for the convexity of $\ell_{\rm opt}$ is that 
$$
\alpha_1+\alpha_2(\log(p_{W_{\rm opt}}))^{\prime\prime}(w) \le0, \;\;\;\; \text{for all}\;w\in\mathbb{R},
$$
or
$$
1-\sig_{\rm opt}^2\Ic(W_{\rm opt}) + (\log(p_{W_{\rm opt}}))^{\prime\prime}(w) \le 0.
$$
This condition is satisfied if the statement of the lemma holds for $\sig = \sig_{\rm opt}$ : 
\begin{align*}
&1-\sig_{\rm opt}^2\Ic(W_{\rm opt}) + (\log(p_{W_{\rm opt}}))^{\prime\prime}(w) \le 1-\sig_{\rm opt}^2\Ic(W_{\rm opt})  -\frac{1}{1+\sig_{\rm opt}^2} <0,
\end{align*}
where we used \eqref{eq:pos_second} in the last inequality. This concludes the proof.
\end{proof}
\subsubsection{Optimal loss function for Signed model}\label{sec:opt_special}




In the case of Signed model, it can be proved that the conditions of Lemma \ref{lem:convex} is satisfied. Since $W_\sig=\sig G+SY$, we derive the probability density of $W_{\sig}$ as follows :
\begin{align*}
p_{W_{\sig}}(w) = p_{_{\sig G}}(w) * p_{SY}(w)= \frac{\text{exp}(-w^2/(2+2\sig^2))}{\sqrt{2\pi(1+\sig^2)}}\cdot f(w),
\end{align*}
where $$f(w)=2-2\text{Q}(w/(\sig\sqrt{2+2\sig^2})).$$
Direct calculation shows that $f$ is a log-concave function for all $w \in \mathbb{R}$. Therefore 
\begin{align*}
(\log(p_{W_{\sig}}))^{\prime\prime}(w)&=-\frac{1}{\sig^2+1}+(\log(f))^{\prime\prime}(w)\\
&\le -\frac{1}{\sig^2+1}.
\end{align*}
This proves the convexity of optimal loss function derived according to Theorem \ref{thm:opt_loss} when measurements follow the Signed model.

\section{Noisy Signed Measurement Model}\label{sec:noisysigned}
Consider a noisy-signed label function as follows:
\begin{align*}
y_i =f_{\eps}(\ab_i^T\x_0) =  \begin{cases}  \sign(\ab_i^T\x_0) &, \text{w.p.}~~1-\eps, \\  -\sign(\ab_i^T\x_0) &, \text{w.p.}~~\eps,  \end{cases}
\end{align*}
where $\eps \in [0,1/2]$. 
\begin{figure}
  \centering
    \includegraphics[width=7.8cm,height=6.4cm]{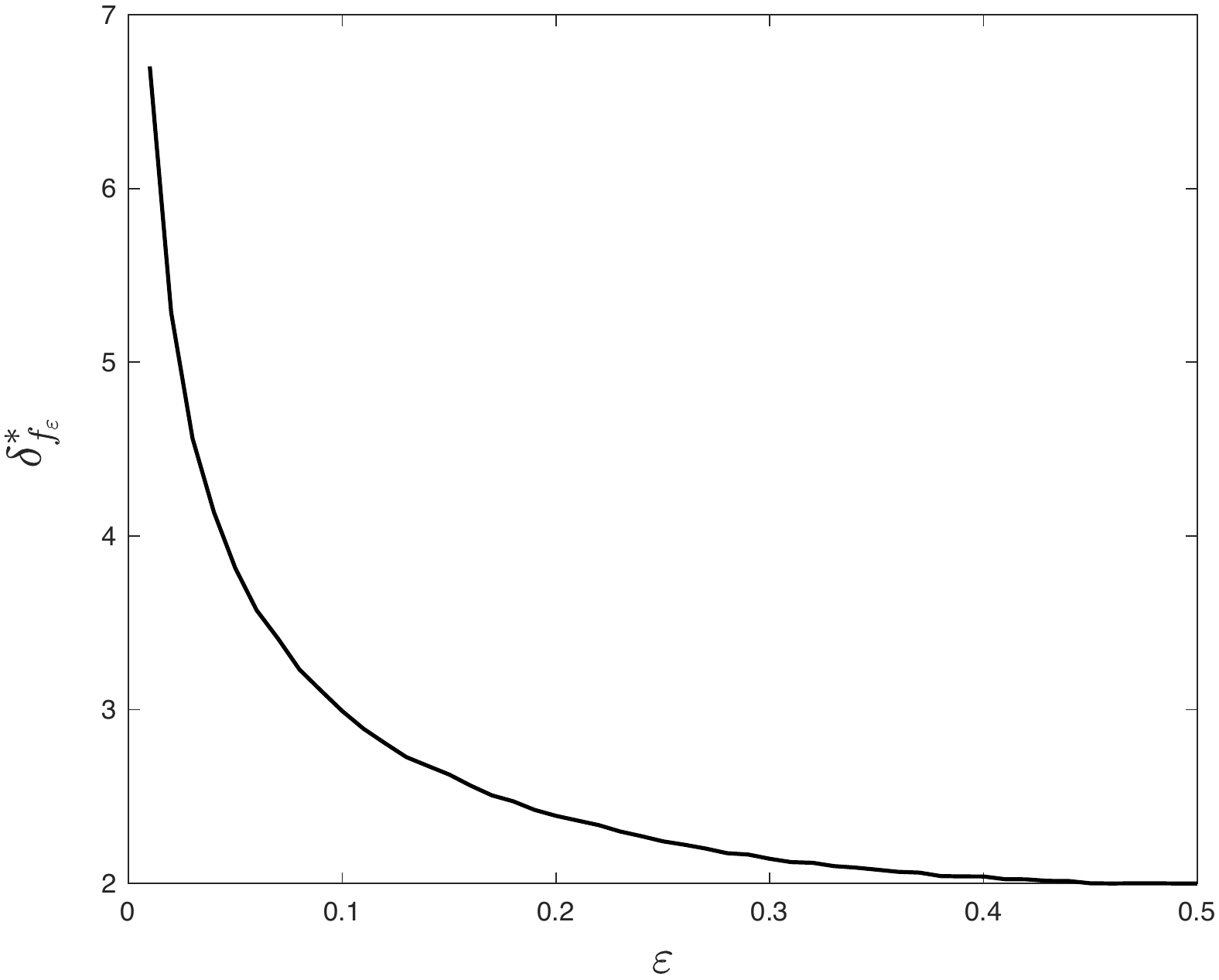}
      \caption{\footnotesize The value of the threshold $\delta^\star_{f_\eps}$ in \eqref{eq:thcandes} as a function of probability of error \small$\eps\in[0,1/2]$. For Logistic and Hinge-loss, the set of minimizers in \eqref{eq:gen_opt} is bounded (as required by Theorem \ref{thm:main}) iff $\delta>\delta^\star_{f_\eps}$.}
          \label{fig:thresholdfig}
 \end{figure}
In the case of signed measurements i.e., \small$y_i = \sign(\ab_i^T\x_0)$, \normalsize it can be observed that for all possible values of $\delta$, the condition \eqref{eq:sep} in Section \ref{sec:logloss} holds for $\x_s = \x_0$. This implies the separability of data and therefore the solution to the optimization problem \eqref{eq:gen_opt} is unbounded for all $\delta$. However in the case of noisy signed label function, boundedness or unboundedness of solutions to \eqref{eq:gen_opt} depends on $\delta$. As it was discussed in Section \ref{sec:logloss}, the minimum value of $\delta$ for bounded solutions is derived from the following:
\begin{equation}\label{eq:thcandes}
\delta^\star_{f_{\eps}}(\eps):= \left(\min_{c\in\R}\E\left[\left(G+c\,S\,Y\right)_{-}^2\right]\right)^{-1},
\end{equation}
where $Y=f_{\eps}(S)$. It can be checked analytically that $\delta^\star_{f_{\eps}}$ is a decreasing function of $\eps$ with $\delta^\star_{f_{\eps}}(0^+)=+\infty$ and $\delta^{\star}_{f_{\eps}}(1/2)=2$. \\
In Figure \ref{fig:thresholdfig}, we have numerically evaluated the threshold value $\delta^\star_{f_\eps}$ as a function of the probability of error $\eps$. For $\delta<\delta^\star_{f_\eps}$, the set of minimizers of the \eqref{eq:gen_opt} with logistic or hinge loss is unbounded. \\
The performances of LS, LAD and Hinge loss functions for Noisy-signed measurement model with $\eps= 0.1$ and $\eps=0.25$ are demonstrated in Figures \ref{fig:eppointone} and \ref{fig:eppointtwentyfive}, respectively. Comparing performances of Least-Squares and Hinge-loss functions suggest that hinge-loss is robust to measurement corruptions, as for moderate to large values of $\delta$ it outperforms the LS  estimator.  Theorem \ref{thm:main} opens the way to analytically confirm such conclusions, which is an interesting future direction. 

\begin{figure}
\centering
	\begin{subfigure}{0.47\textwidth}
    		\includegraphics[width=7.2cm,height=6.3cm]{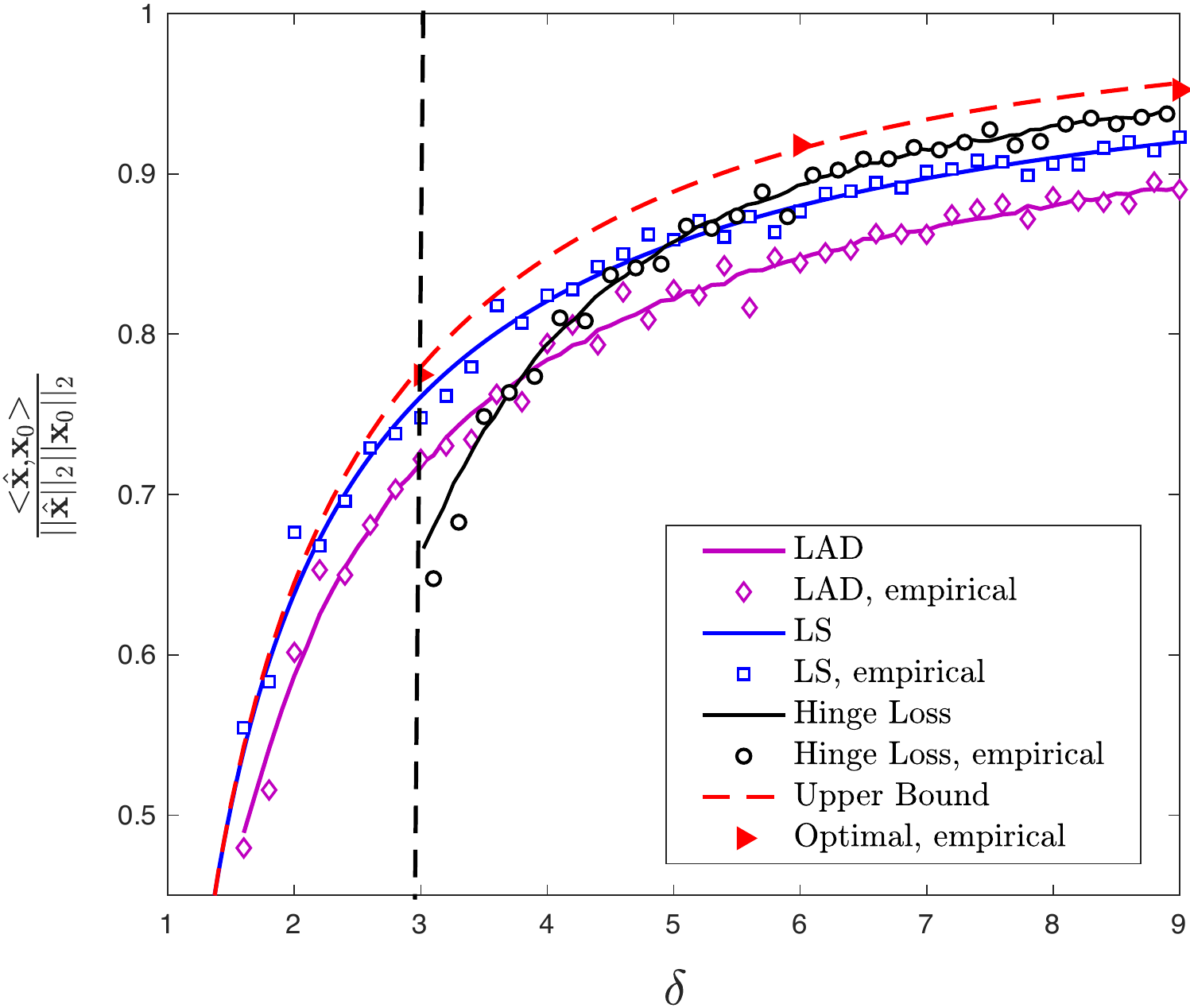}
		\caption{{\footnotesize $\eps = 0.1$}}
    		\label{fig:eppointone}
\end{subfigure}
    	\begin{subfigure}{0.47\textwidth}
	\centering
    		\includegraphics[width=7.2cm,height=6.39cm]{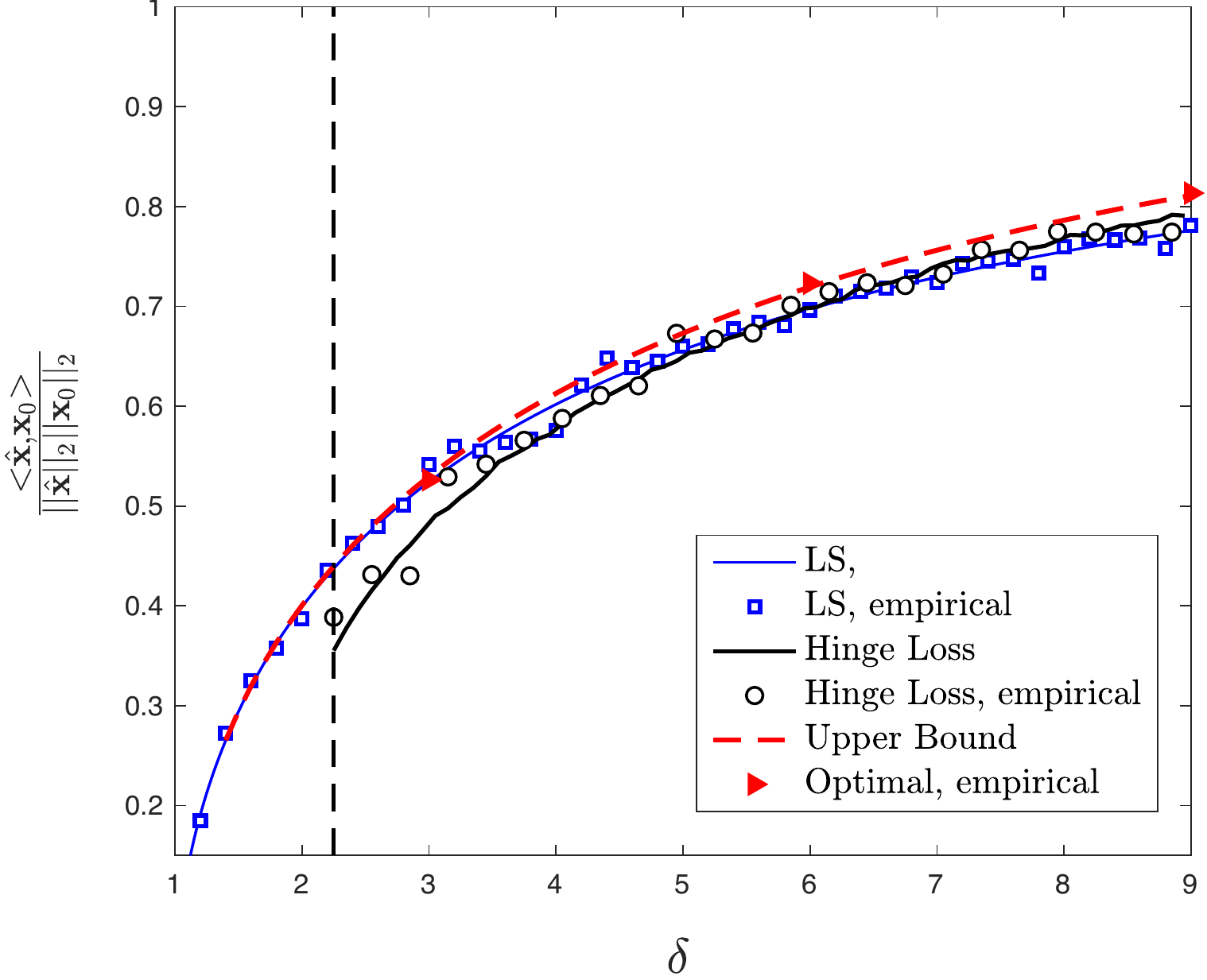}
		\caption{{\footnotesize $\eps = 0.25$}}
    		\label{fig:eppointtwentyfive}
\end{subfigure}
\caption{\footnotesize Comparisons between analytical and empirical results for the least-squares (LS), least-absolute deviations and Hinge loss functions along with the upper bound on performance and the empirical performance of optimal loss function as in Theorem \ref{thm:opt_loss}, for Noisy-signed measurement model with $\eps = 0.1$ (left) and $\eps=0.25$ (right). The vertical dashed lines are evaluated by \eqref{eq:thcandes} and represent $\delta^\star_{f_\eps} \approx 3$ and $2.25$ for $\eps=0.1$ and $0.25$, respectively.}
 \end{figure}

\section{Proofs and discussion on LS performance}\label{sec:LS_app}

\subsection{Proof of Corollary \ref{cor:LS}}\label{sec:corLS}
In order to get the values of $\alpha$ and $\mu$ as in the statement of the corollary, we show how to simplify Equations \eqref{eq:eq_main} for $\ell(t)=(t-1)^2$. In this case, the proximal operator admits a simple expression:
\begin{equation}
\prox{\ell}{x}{\la} = ({x+2\la})\Big/({1+2\la}).\nn
\end{equation}
Also, $\elld(t)=2(t-1)$.
Substituting these in \eqref{eq:mu_main2} gives the formula for $\mu$ as follows:
\bea\nn
0 &= \E\left[YS(\alpha G + \mu SY - 1)\right] = \mu\, \E[S^2] - \E[YS]\\
&\qquad\qquad\qquad \Longrightarrow
\mu = \E[YS], \nn
\eea
where we have also used from \eqref{eq:GSY} that $\E[S^2]=1$ and $G$ is independent of $S$.
Also, since $\elldd(t)=2$, direct application of \eqref{eq:lambda_main3} gives
\bea
1 = \lambda\delta\,\frac{2}{1+2\la}\Longrightarrow \la = \frac{1}{2(\delta-1)}\nn.
\eea
Finally, substituting the value of $\lambda$ in \eqref{eq:alpha_main2} we obtain the desired value for $\alpha$ as follows :
\begin{align*}
\alpha^2 &= 4 \lambda^2 \delta\,\mathbb{E}\left[(\prox{\ell}{\ourx}{\la}-1)^2\right] \\
&= \frac{4\lambda^2}{(1+2\la)^2}\delta\,\mathbb{E}\left[(\alpha G+\mu S Y -1)^2 \right] \\
&=\frac{4\la^2\delta}{(1+2\la)^2}(\alpha^2 +\mu^2 + 1 - 2\mu \E[SY])\\
&=\frac{1}{\delta}(\alpha^2+1-\left(\E[SY]\right)^2)\label{eq:alpha}\quad \\ 
&\Longrightarrow \alpha =\sqrt{1-\left(\E[SY]\right)^2}\cdot\sqrt{\frac{1}{\delta-1}}.
\end{align*}

\subsection{Discussion}

\paragraph{Linear vs binary.}~~On the one hand, Corollary \ref{cor:LS} shows that least-squares performance for binary measurements satisfies
\begin{equation}\label{eq:norm_LS}
\lim_{n\rightarrow \infty} \Big\|\xh-\frac{\mu}{\|\x_0\|_2}\cdot{\x_0}\Big\|_2^2 = \tau^2\cdot \frac{1}{\delta-1},
\end{equation}
where $\mu$ is as in \eqref{eq:mu_LS} and $\tau^2:=1-(\E[SY])^2$. On the other hand, it is well-known (e.g., see references in \cite[Sec.~5.1]{Master}) that least-squares for (scaled) linear measurements with additive Gaussian noise (i.e. $y_i= \rho \ab_i^T\x_0 + \sigma z_i$, $z_i\sim\Nn(0,1)$) 
leads to an estimator that satisfies
\bea
\lim_{n\rightarrow \infty} \left\|\xh-\rho\cdot{\x_0}\right\|_2^2 = \sigma^2\cdot \frac{1}{\delta-1}.\label{eq:norm_LS_lin}
\eea
Direct comparison of \eqref{eq:norm_LS} to \eqref{eq:norm_LS_lin} suggests that least-squares with binary measurements performs the same as if measurements were linear with scaling factor $\rho=\mu/\|\x_0\|_2$ and noise variance $\sigma^2=\tau^2=\alpha^2(\delta-1)$.  This worth-mentioning conclusion is not new as it was proved in \cite{Bri,PV15,NIPS,genzel2017recovering}. We include a short discussion on the relation to this prior work in the following paragraph. We highlight that all these existing results are limited to a least-squares loss unlike our general analysis. 

\paragraph{Prior work.}
There is a lot of recent work on the use of least-squares-type estimators for recovering signals from nonlinear measurements of the form $y_i=h(\ab_i^T\x_0)$ with Gaussian vectors $\ab_i$. The original work that suggests least-squares as a reasonable estimator in this setting is due to Brillinger \cite{Bri}. In his 1982 paper, Brillinger studied the problem in the classical statistics regime (aka $n$ is fixed not scaling with $m\rightarrow+\infty$) and he proved for the least-squares solution satisfies
$$
\lim_{m\rightarrow+\infty} \frac{1}{m}\left\|\xh-\frac{\mu}{\|\x_0\|_2}\cdot{\x_0}\right\|_2^2 = \tau^2,
$$
where
\begin{align}
\mu &= \E[S Y],\quad\quad\qquad S\sim\Nn(0,1),\nn\\
\tau^2 &= \E[(Y-\mu S)^2].\label{eq:Bri}
\end{align}
and the expectations are with respect to $S$ and possible randomness of $f$. Evaluating \eqref{eq:Bri} for $Y=f_{\eps}(S)$ leads to the same values for $\mu$ and $\tau^2$ in \eqref{eq:norm_LS}. In other works, \eqref{eq:norm_LS} for $\delta\rightarrow+\infty$ indeed recovers Brillinger's result. The extension of Brillinger's original work to the high-dimensional setting (both $m,n$ large) was first studied by Plan and Vershynin \cite{PV15}, who derived (non-sharp) non-asymptotic upper bounds on the performance of constrained least-squares (such as the Lasso). Shortly after, \cite{NIPS} extended this result to \emph{sharp} asymtpotic predictions and to regularized least-squares. In particular, Corollary \ref{cor:LS} is a special case of the main theorem in \cite{NIPS}. Several other interesting extensions of the result by Plan and Vershynin have recently appeared in the literature, e.g., \cite{genzel2017high,goldstein2018structured,genzel2017recovering,thrampoulidis2018generalized}. However, \cite{NIPS} is the only one to give results that are sharp in the flavor of this paper. Our work, extends the result of \cite{NIPS} to general loss functions beyond least-squares. The techniques of \cite{NIPS} that have guided the use of the CGMT in our context have also been recently applied in \cite{PhaseLamp} in the context of phase-retrieval.
%

%
%
%
%
%

\end{document}